\title{Covariate-informed reconstruction of partially observed functional data via factor models}
\DeclareSymbolFontAlphabet{\bbold}{bbold}
\DeclareSymbolFontAlphabet{\mathbb}{AMSb}%
\newcommand{\bbLambda}{\bbold{\Lambda}}
\newcites{Appendix}{References}
\DeclarePairedDelimiter\abs{\lvert}{\rvert}
\newcommand{\norm}[1]{\lVert#1\rVert}
\newcommand{\normF}[1]{\lVert#1\rVert_\textup{F}}
\newcommand{\normS}[1]{\lVert#1\rVert_2}
\newcommand{\var}{\textup{Var}}
\newcommand{\expv}{E}
\newcommand{\prob}{P}
\newcommand{\du}{\textup{d}u}
\newcommand{\dv}{\textup{d}v}
\newcommand*\mean[1]{\overline{#1}}
\newcommand{\bigO}{\textsc{O}}
\newcommand{\smallO}{\textsc{o}}
\newcommand{\bigM}{\textsc{M}}
\newcommand{\smallC}{\textsc{c}}
\theoremstyle{remark}
\newtheorem{definition}{Definition}
\theoremstyle{plain}
\newtheorem{theorem}{Theorem}
\newtheorem*{remark}{Remark}
\newtheorem{proposition}{Proposition}
\newtheorem{lemma}{Lemma}
\newtheorem{corollary}{Corollary}
\author{Maximilian Ofner\footnote{\textit{Address for correspondence:} Maximilian Ofner, Institute of Statistics, Graz University of Technology, Kopernikusgasse 24/III, A-8010 Graz, Austria. E-mail: m.ofner@tugraz.at} }
\author{Siegfried Hörmann}
\affil{Graz University of Technology}
\begin{document}

\maketitle

\begin{abstract}
This paper studies linear reconstruction of partially observed functional data which are recorded on a discrete grid. We propose a novel estimation approach based on approximate factor models with increasing rank taking into account potential covariate information. Whereas alternative reconstruction procedures commonly involve some preliminary smoothing, our method separates the signal from noise and reconstructs missing fragments at once. We establish uniform convergence rates of our estimator and introduce a new method for constructing simultaneous prediction bands for the missing trajectories. A simulation study examines the performance of the proposed methods in finite samples. Finally, a real data application of temperature curves demonstrates that our theory provides a simple and effective method to recover missing fragments.
\end{abstract}

\noindent{\it Keywords:} approximate factor models, increasing rank, multivariate functional data, partially observed, uniform consistency.

\section{Introduction}\label{sec:Introduction}

Modern devices' ability to measure data on dense grids leads to a massive amount of high-dimensional data, posing new challenges for researches. In the last decades, growing attention has been put to functional data analysis; see \cite{bosq2000linear}, \cite{ramsay1997functional}, \cite{ferraty2006nonparametric}, and \cite{hsing2015theoretical} for some introductory books. A more recent branch of works deals with partially observed functional data (\cite{kraus2015components}, \cite{kneip2020optimal}, and \cite{delaigle2021estimating} among others), where functions are only observable on subsets of their domain. Such a situation may arise, if an electronic device fails to record data over a certain time span (due to low battery for example). An obvious goal is then to recover the missing data from the available information. In the underlying work, we present a new reconstruction procedure for this purpose and illustrate our methodology with a sample of temperature data.

Figure~\ref{fig:sample} shows intraday temperature curves from Graz (Austria) for $T = 76$ days between July~1 and September 14, 2022 provided by \cite{data2023}. The data were recorded half-hourly at measuring sites in the east and west of the city; we refer to them as Stations~{\tt E} and~{\tt W},  respectively. Whereas curves of Station~{\tt W} are complete, 10 out of 76 days (around 13\%) are incompletely observed at  Station~{\tt E}. For obtaining a complete picture of the data and reducing the risk of loosing important information, it is thus preferable to employ reconstruction procedures to recover the missing parts. With existing methods, the curves of Station~{\tt E} can be recovered using the available subsample of Station {\tt E} only. Here, we propose a new approach which allows to take into account covariate information in our reconstruction problem (in the present example temperature data from Station~{\tt W}). A more detailed discussion of this real data illustration is given in Section~\ref{sec:RealDataApplication}.

\begin{figure}
\centering
\includegraphics[width=\linewidth]{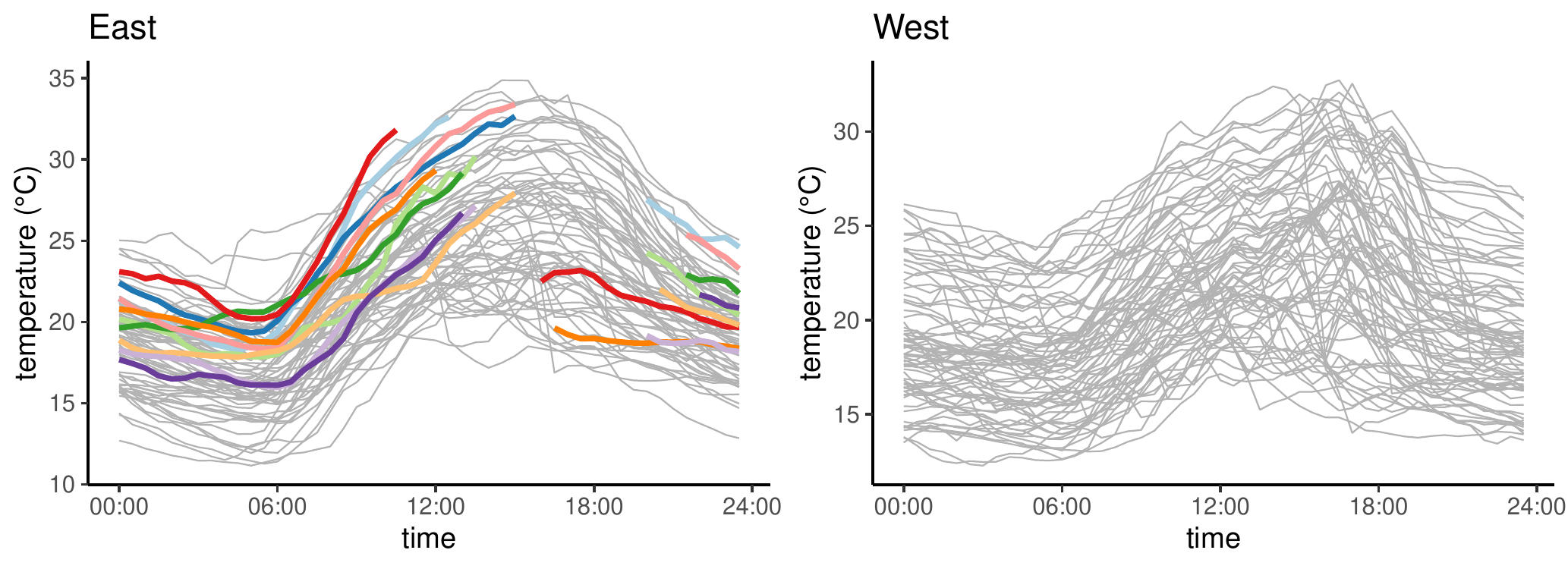}
\caption{Intraday temperature values measured half-hourly in the east and west of Graz~(Austria) between July 1 and September 14, 2022. Highlighted curves reflect 10 days with missing observations.}
\label{fig:sample}
\end{figure}

Generally speaking, we aim to recover an incompletely observed random function $X$ from its observed part and potential covariates $X^{(1)}, \dots, X^{(D)}$ for some~$D \in \mathbb{N}$. Our main interest lies in estimating the best linear reconstruction utilizing a subsample that is observable on the full domain. The main contributions of the underlying work are summarized in the following:

\begin{itemize}
\item First, we allow for additional functional covariates extending related procedures by \cite{kraus2015components} and \cite{kneip2020optimal}. To this end, we make use of a Karhunen-Loève expansion for multivariate functional data. Evidently, the multivariate perspective is needed in settings similar to our real data illustration, where we add covariate information of different measuring sites to improve reconstructions.

\item Second, we formulate direct assumptions on the latent functional structure which implicate an approximate factor model for the corresponding data matrix. This allows us to leverage factor models for the estimation and derivation of sharp convergence rates. Such models are are rather popular in macroeconometrics and successfully applied to many practically relevant examples; see \cite{bai2003inferential}, \cite{fan2013large}, and \cite{bai2021approximate} for important theoretical contributions. From the factor model perspective, our reconstruction task can actually be viewed as a matrix completion problem, which has been recently studied by \cite{bai2021matrix}, \cite{cahan2022factor}, and \cite{xiong2022large}. While in the classical factor models literature the number of factors (denoted by~$r$) is fixed and small, a larger number of factors is theoretically appealing in the case of functional data. In our asymptotic setup, we therefore allow $r$ to diverge with the sample size which poses considerable technical challenges (see \cite{fan2011high}, \cite{li2017determining},  or \cite{hormann2022consistently}).
\end{itemize}

We obtain uniform convergence rates of the proposed reconstruction estimator under mild assumptions. In particular, our theoretical results do not require differentiability of curves and allow for cross-sectionally correlated measurement errors. For functions observable on the full domain, the methodology can be used as a preprocessing tool to separate the signal from the noise which extends results of \cite{hormann2022consistently}. Here, we focus on situations like in our real data example and do not consider the case of functional snippets, where each function is only observable on a proper subset of its domain. This case necessitates extra assumptions as the covariance structure is not identified; see for example \cite{descary2019recovering}, \cite{delaigle2021estimating}, \cite{lin2021basis}, and \cite{lin2022mean}.

The remainder of this work is structured as follows. In Section~\ref{sec:SetupAndNotation}, we discuss multivariate functional data and the data generating process. Thereafter, in Section~\ref{sec:ReconstructionMethodology}, we comment on the reconstruction problem and introduce a factor-based estimation procedure for the best linear reconstruction. Furthermore, we derive asymptotic convergence rates under appropriate assumptions. A method to construct simultaneous prediction bands for the missing parts is then introduced in Section~\ref{sec:PredictionBands}. In Section~\ref{sec:FiniteSampleProps}, we examine finite sample properties in a simulation study and we return to our real data illustration in Section~\ref{sec:RealDataApplication}. Proofs and additional figures can be found in the supplementary file.

\section{Setup and notation}\label{sec:SetupAndNotation}

\subsection{Multivariate functional data}

We study centered continuous random functions $X = (X(u): u \in [0,1])$ (target) and $X^{(d)} = (X^{(d)}(u): u \in [0,1])$ (covariates) indexed by $d \in \{1, \dots, D\}$. For the sake of a simplified presentation, the case of functions defined on different (dimensional) domains as considered in~\cite{happ2018multivariate} is not pursued here. The random elements take values in the separable Hilbert space~$L^2([0,1])$ of square integrable functions. For $f,g \in L^2([0,1])$, the corresponding inner product $\langle f, g\rangle_2 = \int_0^1 f(u)g(u)\,\du$ induces the norm $\norm{f}^2 = \langle f, f\rangle_2 = \int_0^1 f(u)^2\, \du$. We suppose that $X$ were only observable on a subset~$\bigO \subseteq [0,1]$ of its domain (this includes the case~$\bigO = \emptyset$). No information of $X$ is then available on the set $\bigM = [0,1] \setminus \bigO$. Define $X^\smallO = (X(u)\mathbbm{1}\{u \in \bigO\} : u \in [0,1])$ and consider the multivariate random functions
\begin{align*}
&&\mathcal{X}: &\, u \mapsto (X(u), X^{(1)}(u), \dots, X^{(D)}(u))', \\ &&\mathcal{X}^\smallO: & \, u \mapsto (X^\smallO(u), X^{(1)}(u), \dots, X^{(D)}(u))'.
\end{align*}
We may view $\mathcal{X}$ and $\mathcal{X}^\smallO$ as random elements which map to~$\mathcal{H} = \prod_{d=0}^D L^2([0,1])$. Using positive weights $w_d > 0$, one can show that $\mathcal{H}$ equipped with the inner product
\begin{equation*}
\langle \langle f, g\rangle \rangle = \sum_{d=0}^D w_d \langle f^{(d)}, g^{(d)}\rangle_2, \qquad f, g \in \mathcal{H},
\end{equation*}
is a Hilbert space. Here, we set $w \equiv 1$ and note that one could replace $X^{(d)}$ with~$\sqrt{w_d}X^{(d)}$ for the general case. An empirical choice of the weights is discussed in Section~\ref{sec:ChoiceOfParameters}. We expand $\mathcal{X}^\smallO$ in terms of its multivariate Karhunen-Loève expansion
\begin{equation}\label{eq:MKL}
\mathcal{X}^\smallO(u) = \sum_{k=1}^\infty \xi_k^\smallO \varphi_k^\smallO(u), \qquad u \in [0,1],
\end{equation}
where the scores $\xi^\smallO_k = \langle \langle \mathcal{X}^\smallO, \varphi^\smallO_k\rangle \rangle$ are uncorrelated random variables with variance~$\lambda^\smallO_k$, and~$(\lambda_k^\smallO, \varphi_k^\smallO)$ constitute eigenelements of the corresponding covariance operator $\Gamma^\smallO: \mathcal{H} \to \mathcal{H}$ of $\mathcal{X}^\smallO$ given by
\begin{equation*}
\Gamma^\smallO (f) = \expv[ \langle \langle \mathcal{X}^\smallO, f \rangle \rangle \mathcal{X}^\smallO], \qquad f \in \mathcal{H}.
\end{equation*}
Throughout the underlying work, we assume that the sequence of eigenvalues is ordered, that is to say, $\lambda^\smallO_1\geq \lambda^\smallO_2 \geq \dots \geq 0$. Note that eigenvalues, eigenfunctions and scores depend on the observation set $\bigO$. The Karhunen-Loève expansion is optimal in the sense that it minimizes the mean squared error which results from truncating~\eqref{eq:MKL} at a finite level. The truncation error
\begin{equation}\label{eq:TruncationError}
\mathcal{R}_r^\smallO(u) = \mathcal{X}^\smallO(u) - \sum_{k=1}^r \xi_k^\smallO \varphi_k^\smallO(u), \qquad u \in [0,1],
\end{equation}
satisfies $\mathcal{R}_r^\smallO(u) \to 0$ as $r \to \infty$, where convergence is in mean square and uniformly for $u \in [0,1]$. General properties of the multivariate Karhunen-Loève expansion are discussed in \cite{happ2018multivariate}. 

\subsection{Data generating process}

In the following, we describe the data generating process in more detail. We generally allow for a time dependent sample of functional data as stated in Assumption~\ref{ass:Sample}.
\begin{enumerate}[label = (A\arabic*)]
\item \label{ass:Sample} $(\mathcal{X}_t: t\leq T)$ is a strictly stationary $\alpha$-mixing series of multivariate functions.
\end{enumerate}
For  convenience, the definition of $\alpha$-mixing is given in Section~\ref{sec:Mixing} of the supplementary file.

\subsubsection{Discrete measurements}

Adopting a common framework, we suppose that functions are recorded on a discrete grid with additive noise. Accordingly, for~$d \in \{0, \dots, D\}$, $t \in \{1, \dots, T\}$, and $i \in \{1, \dots, N\}$ we set
\begin{equation}\label{eq:Measurements}
y^{(d)}_{ti} = x^{(d)}_{ti} + e^{(d)}_{ti},
\end{equation}
where $x^{(0)}_{ti} = X_t(u_i)$, $x^{(d)}_{ti} = X^{(d)}_t(u_i)$ for $d \geq 1$, and $0 = u_1 < u_2 < \dots < u_N  = 1$ defines a grid of equispaced points. In addition, let
\begin{equation*}
\mathbbm{y}_t = (y_{t1}^{(0)}, \dots, y_{tN}^{(0)}, \dots,y_{t1}^{(D)}, \dots, y_{tN}^{(D)})
\end{equation*}
denote the vector of size $(1 \times (D+1)N)$ which is obtained by stacking the measurements in~\eqref{eq:Measurements}. In our asymptotic setup, we assume that the number of grid points diverges, that is, $N \to \infty$. If $N$ remains bounded, there is no hope of recovering $X$ even in the absence of noise, as there exist infinitely many random functions which marginal distributions coincide on $u_1, \dots, u_N$; see \cite{hall2006properties}.

\begin{remark}
The assumption of equispaced grid points is often satisfied in practice, when measurements are taken by electronic instruments; see our real data application. However, our theoretical results still remain true if $X, X^{(1)}, \dots, X^{(D)}$ were recorded on $D + 1$ different but sufficiently dense grids. We do not cover the case of sparse measurements but deliberately focus on the dense observation regime where we can effectively employ factor models.
\end{remark}

\subsubsection{Missing data}

Now suppose that a curve $X_s$ were only \emph{observable} on a set~$\bigO = \bigO_s \subset [0,1]$. By~\eqref{eq:Measurements}, it holds that $y_{si}^{(0)} = X_s(u_i) + e^{(0)}_{si}$ is observed whenever $u_i \in \bigO$ and thus $\mathbbm{y}_s$ may be prone to missing values in the leading entries.
Concerning the observation sets, we assume

\begin{enumerate}[label = (A\arabic*)]
\setcounter{enumi}{1}
\item \label{ass:MCAR} $(\bigO_t: t\leq T)$ are i.i.d.~and independent from all other quantities.
\end{enumerate}
Assumption~\ref{ass:MCAR} is a missing-completely-at-random condition (\cite{rubin1976inference}) and similarly imposed in the related works of \cite{kraus2015components} and \cite{kneip2020optimal}. A relaxation of the assumption is discussed in \cite{liebl2019partially}. 

To recover the missing part, we utilize a completely observable subsequence of $(\mathbbm{y}_t: t \leq T)$. Let $\mathcal{T} = \{t \leq T: \bigO_t = [0,1]\}$ be the corresponding indices of completely observable curves and set $T_\smallC = \vert \mathcal{T}\vert$. In addition to $\mathbbm{y}_t$, let~$\mathbbm{y}_{\smallO,t}$ be the vector of dimension $(1\times N_\smallO)$ which stacks $(y^{(0)}_{ti}: u_i \in \bigO)$ with $(y_{t1}^{(1)}, \dots, y_{tN}^{(1)}, \dots,y_{t1}^{(D)}, \dots, y_{tN}^{(D)})$. Set $\mathbbm{Y}_\smallC$ and~$\mathbbm{Y}_\smallO$ to be matrices with rows~$\mathbbm{y}_t$ and $\mathbbm{y}_{\smallO,t}$ for~$t \in \mathcal{T}$, respectively. Let $\mathbbm{y}_{\smallC,i}$ be the $i$-th column of $\mathbbm{Y}_\smallC$. Define $\mathbbm{X}_\smallO$ and $\mathbbm{E}_\smallO$ and other quantities in the same vein. An illustration of the notation is given in Figure~\ref{fig:Illustration}.

\begin{figure}
\noindent\begin{tabularx}{\textwidth}{@{}c@{}cr} 
$X$ & $X^{(1)}$ & \\
\raisebox{-20pt}{\includegraphics[width = 0.46\textwidth]{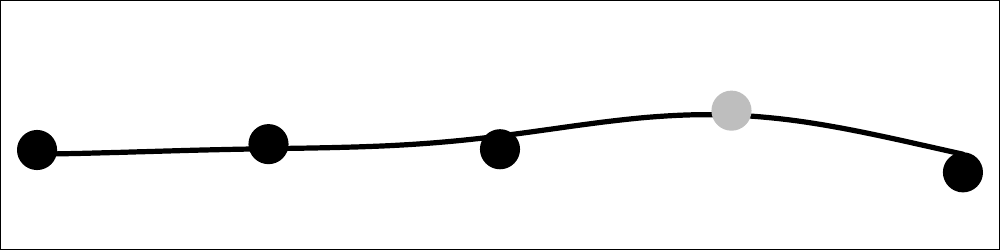}} & \raisebox{-20pt}{\includegraphics[width = 0.46\textwidth]{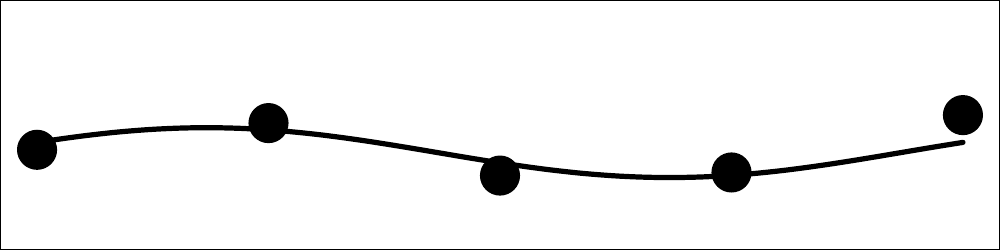}} & $\mathbbm{y}_1$\\
\raisebox{-20pt}{\includegraphics[width = 0.46\textwidth]{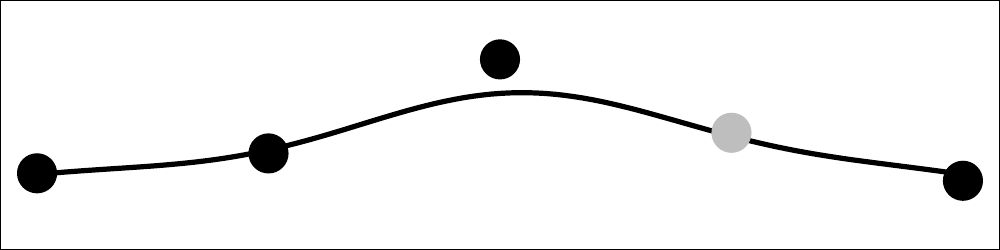}} & \raisebox{-20pt}{\includegraphics[width = 0.46\textwidth]{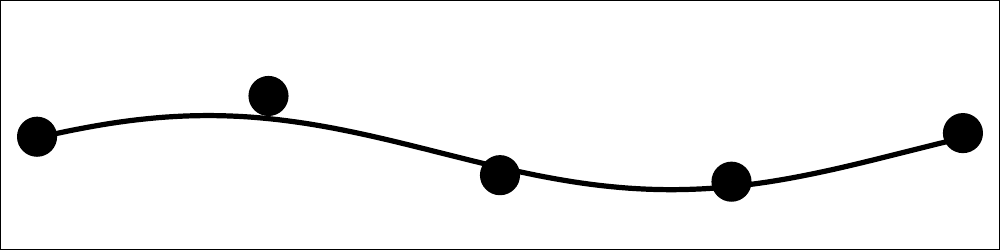}} & $\mathbbm{y}_2$ \\
\raisebox{-40pt}{\includegraphics[width = 0.46\textwidth]{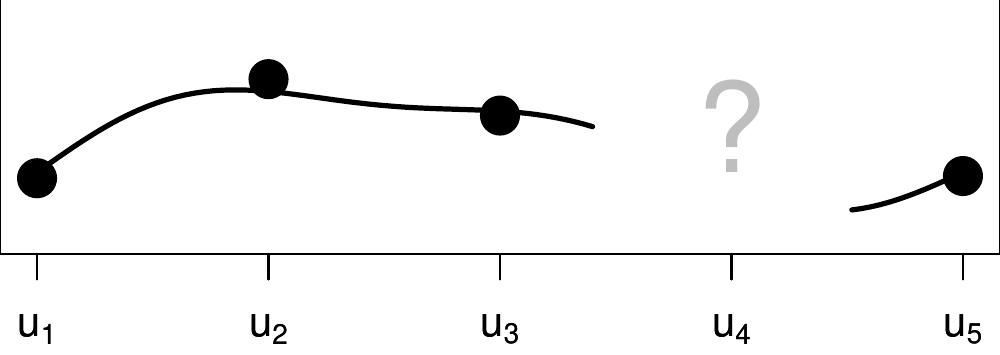}} & \raisebox{-40pt}{\includegraphics[width = 0.46\textwidth]{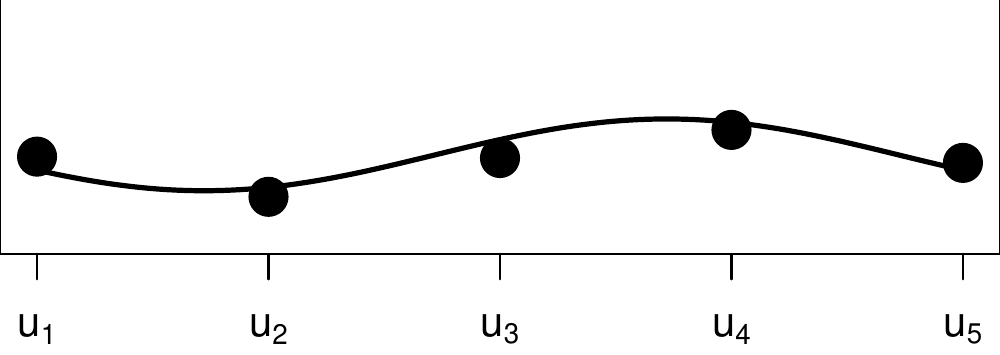}} & $\mathbbm{y}_3$ \\
\includegraphics[width = 0.46\textwidth]{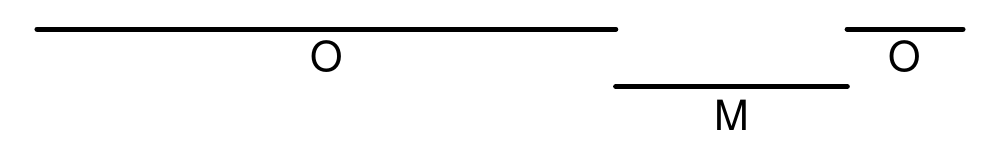} &  &
\end{tabularx}
\caption{The figure considers a sample of $T = 3$ multivariate functions with $D = 1$ covariate. Each curve is measured on a regular grid of $N = 5$ points with noise. Stacking the measurements row-wise gives $(1 \times 10)$ vectors $\mathbbm{y}_t$. As only the first two curves are completely observable, the index set of complete curves equals $\mathcal{T} = \{1,2\}$ and $\mathbbm{Y}_\smallC = (\mathbbm{y}_1, \mathbbm{y}_2)'$. The fourth element in $\mathbbm{y}_3$ is missing since~$X_3$ is not observable on $\bigM \ni u_4$. The  vectors $\mathbbm{y}_{\smallO,t}$ are obtained by stacking black bullets row-wise and of size $(1 \times 9)$. We get $\mathbbm{Y}_\smallO = (\mathbbm{y}_{\smallO, 1}, \mathbbm{y}_{\smallO, 2})'$.}
\label{fig:Illustration}
\end{figure}
 
\subsubsection{Factor model}

When $N$ is large, a dimension reduction step is needed to perform statistical inference. Let us gain some more intuition before we specify our model in Assumption~\ref{ass:FFM} stated below. In view of~\eqref{eq:TruncationError} and optimality of the Karhunen-Loève expansion, a finite number of scores $\xi_k^\smallO$ is often sufficient to capture the main variation of the random functions reasonably well. If $\mathcal{R}_r^\smallO$ in~\eqref{eq:TruncationError} is zero for some~$r$, we may view $\mathcal{X}_t^\smallO(u) = \sum_{k=1}^r \xi_k^\smallO \varphi_k^\smallO(u)$ as the ``common component'' in a functional factor model. This, in turn, implies that the related data matrix $\mathbbm{X}_\smallO$ has rank $r$. Under suitable assumptions on the measurement errors, $\mathbbm{Y}_\smallO = \mathbbm{X}_\smallO + \mathbbm{E}_\smallO$ then follows a multivariate factor model. Concerning the ``idiosyncratic component'' $\mathbbm{E}_\smallO$, one typically assumes independent or mildly correlated elements (in the later case, the factor model is termed \textit{approximate}). To leverage factor analysis in our work, we thus need to control the rank of $\mathbb{X}_\smallO$. Motivated by the above discussion, we require a functional factor model in Assumption~\ref{ass:FFM} to ensure $\text{rank}(\mathbb{X}_\smallO) = r$.

\begin{enumerate}[label = (A\arabic*)]
\setcounter{enumi}{2}
\item\label{ass:FFM}
There exists some $r$ such that the truncation error in~\eqref{eq:TruncationError} satisfies $\mathcal{R}_r^\smallO \equiv 0$.
\end{enumerate}

Assumption~\ref{ass:FFM} is only a sufficient condition for $\mathbbm{Y}_\smallO$ to follow a factor model. In principle, the assumption $\text{rank}(\mathbb{X}_\smallO) = r$ would suffice for obtaining our theoretical results. Since $\mathcal{R}_r^\smallO$ typically vanishes rapidly as $r \to \infty$, Assumption~\ref{ass:FFM} should be approximately satisfied in most practical applications. To also accomodate the functional nature, we allow $r$ to diverge in our asymptotic scenario and assume that the sequence of eigenvalues is summable,~$\lim_{r\to \infty} \sum_{k=1}^r \lambda_k^\smallO < \infty$.

By the above, we can rewrite $\mathbbm{Y}_\smallO$ as
\begin{equation}\label{eq:FactorModel}
\mathbbm{Y}_\smallO = \mathbbm{X}_\smallO + \mathbbm{E}_\smallO = \mathbbm{F}_\smallO \bbLambda_\smallO' + \mathbbm{E}_\smallO,
\end{equation}
where $\mathbbm{F}_\smallO$ is a $(T_\smallC \times {r})$ matrix with elements $\xi^\smallO_{tk}/\sqrt{\lambda^\smallO_k}$ and the columns of the~$(N_\smallO \times r)$ matrix~$\bbLambda_\smallO$ stack the elements $\sqrt{\lambda^\smallO_k}{\varphi_k^\smallO}^{(0)}(u_i)$, for $u_i \in \bigO$, with the vector $$\sqrt{\lambda^\smallO_k}\left({\varphi^\smallO_k}^{(1)}(u_{1}), \dots,{\varphi^\smallO_k}^{(1)}(u_{N}), \dots, {\varphi^\smallO_k}^{(D)}(u_{1}), \dots,{\varphi^\smallO_k}^{(D)}(u_{N}) \right).$$
If $\mathbbm{E}_\smallO$ is independent of $\mathbbm{X}_\smallO$ and if the largest eigenvalue of $\expv[\mathbbm{E}_\smallO'{\mathbbm{E}_\smallO}]$ grows at most linear in~$T_\smallC$, then \eqref{eq:FactorModel} constitutes an approximate factor model of rank~$r$ (\cite{chamberlain}). Main contributions to the asymptotic theory in the case of large dimensions include \cite{bai2003inferential} and \cite{fan2013large}. While the number of covariates $D$ is considered to be fixed in our work, we ensure asymptotic identifiability by letting $N, T \to \infty$.

\begin{remark}
The literature usually assumes a strong factor structure which means that~$\bbLambda_\smallO'\bbLambda_\smallO/N$ converges to a positive definite matrix. In contrast, our assumptions with summability of eigenvalues entail that $\lambda_{\textup{min}}(\bbLambda_\smallO'\bbLambda_\smallO/N) \approx \lambda_r^\smallO$ gets arbitrarily small as $r \to \infty$. This should be compared to factor models with weak loadings where $\bbLambda_\smallO'\bbLambda_\smallO/N^\alpha$ only has a positive definite limit for some~$\alpha \in (0,1)$. As shown by \cite{bai2021approximate}, estimators are still consistent in the later case.
\end{remark}

\section{Reconstruction methodology}\label{sec:ReconstructionMethodology}

As a next step, we propose a linear reconstruction procedure for the missing fragments in Section~\ref{sec:LinearReconstructionOperator}. An estimation procedure and corresponding convergence rates are then discussed in Section~\ref{sec:Estimation} and~\ref{sec:ConvergenceRates}, respectively.

\subsection{Linear reconstruction operator} \label{sec:LinearReconstructionOperator}

To reconstruct $X$, we seek a linear operator $\mathcal{\ell}: \mathcal{H} \to L^2([0,1])$ which minimizes
\begin{equation}\label{eq:MinimizationProblem}
\expv[(X(u) - \mathcal{\ell}(\mathcal{X}^\smallO)(u))^2]
\end{equation}
at any $u \in [0,1]$. Invoking a linear regression, the solution to the above minimization problem is given by the finite-rank operator
\begin{equation}\label{eq:Operator}
\mathcal{L}(\mathcal{X}^\smallO)(u) = \sum_{k=1}^r \xi^\smallO_k \tilde{\varphi}^\smallO_k(u), \qquad u \in [0,1],
\end{equation}
where the ``extrapolated'' basis functions $\tilde{\varphi}^\smallO_k: [0,1] \to \mathbb{R}$ are defined as
\begin{equation*}
\tilde{\varphi}^\smallO_k(u) = \frac{\expv[X(u)\xi^\smallO_k]}{\lambda^\smallO_k}, \qquad u \in [0,1].
\end{equation*}
Continuity of $X(u)$ entails continuity of $\mathcal{L}(\mathcal{X}^\smallO)(u)$. Moreover, for $u \in \bigO$, it can easily be seen that $\mathcal{L}(\mathcal{X}^\smallO)(u) = X(u)$. Properties of linear reconstruction operators in the univariate setting are discussed in \cite{kneip2020optimal}. Due to Assumption~\ref{ass:FFM}, the reconstruction operator given in~\eqref{eq:Operator} is bounded.

The reconstruction of $X$ clearly relates to extrapolation which is substantially different from the interpolation problem thoroughly studied in the functional data literature (see for instance \cite{yao2005functional}). Note that $\mathcal{L}(\mathcal{X}^\smallO)$ is only a linear approximation to $X$ and one cannot hope to reconstruct $X$ from $\mathcal{X}^\smallO$ without error as $X$ is not even identifiable from $\mathcal{X}^\smallO$ in general. Rather, it will be the case that
\begin{equation}\label{eq:ReconstructionError}
X(u) = \mathcal{L}(\mathcal{X}^\smallO)(u) + Z(u), \qquad u \in [0,1],
\end{equation}
where $Z = (Z(u): u \in [0,1])$ denotes the ``reconstruction error" which vanishes on the set~$\bigO$. The following proposition can be proven along the lines of Theorem~2.3 in \cite{kneip2020optimal}.

\begin{proposition}\label{prop}
It holds that
\begin{enumerate}[label=(\roman*)]
\item $\expv[\mathcal{X}^\smallO(u) Z(v)] \equiv 0$ for any $u,v \in [0,1]$;
\item the operator $\mathcal{L}$ minimizes \eqref{eq:MinimizationProblem} among all linear operators $\ell: \mathcal{H} \to L^2([0,1])$. 
\end{enumerate}
\end{proposition}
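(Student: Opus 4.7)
The plan is to prove both parts by exploiting the Karhunen-Loève expansion of $\mathcal{X}^\smallO$ (which is finite, terminating at $k=r$, by Assumption~\ref{ass:FFM}), combined with a standard best-linear-predictor decomposition. Throughout, one should read $\mathbb{E}[\mathcal{X}^\smallO(u) Z(v)] = 0$ componentwise, since $\mathcal{X}^\smallO(u)$ takes values in $\mathbb{R}^{D+1}$.

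For part~(i), I would write
\begin{equation*}
\mathcal{X}^\smallO(u) = \sum_{j=1}^r \xi_j^\smallO \varphi_j^\smallO(u),
\end{equation*}
which is exact under Assumption~\ref{ass:FFM}, and plug in $Z(v) = X(v) - \sum_{k=1}^r \xi_k^\smallO \tilde{\varphi}_k^\smallO(v)$. The computation then reduces to evaluating $\mathbb{E}[\xi_j^\smallO Z(v)]$ for each $j \leq r$. Using that the KL scores are uncorrelated with $\mathbb{E}[\xi_j^\smallO \xi_k^\smallO] = \lambda_j^\smallO \delta_{jk}$, one obtains
\begin{equation*}
\mathbb{E}[\xi_j^\smallO Z(v)] = \mathbb{E}[\xi_j^\smallO X(v)] - \lambda_j^\smallO \tilde{\varphi}_j^\smallO(v),
\end{equation*}
which vanishes directly by the definition of the extrapolated basis $\tilde{\varphi}_j^\smallO(v) = \mathbb{E}[X(v)\xi_j^\smallO]/\lambda_j^\smallO$. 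This gives $\mathbb{E}[\mathcal{X}^\smallO(u) Z(v)] \equiv 0$.

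For part~(ii), I would use the standard orthogonality trick: for any competing linear $\ell$,
\begin{equation*}
\mathbb{E}[(X(u) - \ell(\mathcal{X}^\smallO)(u))^2] = \mathbb{E}[Z(u)^2] + 2\,\mathbb{E}[Z(u)(\mathcal{L}-\ell)(\mathcal{X}^\smallO)(u)] + \mathbb{E}[((\mathcal{L}-\ell)(\mathcal{X}^\smallO)(u))^2],
\end{equation*}
and show the cross term vanishes. Since $(\mathcal{L}-\ell)(\cdot)(u): \mathcal{H} \to \mathbb{R}$ is a bounded linear functional, Riesz representation yields some $a_u \in \mathcal{H}$ with $(\mathcal{L}-\ell)(\mathcal{X}^\smallO)(u) = \langle\langle a_u, \mathcal{X}^\smallO \rangle\rangle$. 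An application of Fubini then gives
\begin{equation*}
\mathbb{E}[Z(u)(\mathcal{L}-\ell)(\mathcal{X}^\smallO)(u)] = \sum_{d=0}^D \int_0^1 a_u^{(d)}(v) \, \mathbb{E}[Z(u) \mathcal{X}^\smallO(v)^{(d)}] \, \mathrm{d}v,
\end{equation*}
which is zero by part~(i). Consequently, the MSE at $u$ is at least $\mathbb{E}[Z(u)^2]$, with equality when $\ell = \mathcal{L}$, proving optimality.

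The only mildly subtle step is justifying the Fubini exchange and the Riesz representation for arbitrary bounded linear $\ell$; everything else is a direct calculation once the KL expansion of $\mathcal{X}^\smallO$ is known to terminate by \ref{ass:FFM}. I expect this to be the main (though still routine) obstacle, and it can be handled using square-integrability of $\mathcal{X}^\smallO$ in $\mathcal{H}$ together with continuity of $X$.
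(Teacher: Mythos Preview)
Your argument is correct and matches the approach the paper defers to (Theorem~2.3 in Kneip and Liebl, 2020): part~(i) is the orthogonality of the residual to the scores, and part~(ii) is the standard Pythagorean decomposition of the MSE. One small technical point on the step you flagged as ``mildly subtle'': for an arbitrary bounded linear $\ell:\mathcal{H}\to L^2([0,1])$, the composition with point evaluation at $u$ is not a well-defined functional, so the Riesz representation is not immediately available. A cleaner route, which avoids both Riesz and Fubini, is to use the finite Karhunen--Lo\`eve expansion directly: by linearity and Assumption~\ref{ass:FFM},
\[
(\mathcal{L}-\ell)(\mathcal{X}^\smallO)(u) = \sum_{j=1}^r \xi_j^\smallO\, g_j(u), \qquad g_j := (\mathcal{L}-\ell)(\varphi_j^\smallO)\in L^2([0,1]),
\]
so the cross term equals $\sum_{j=1}^r g_j(u)\,\expv[\xi_j^\smallO Z(u)]$, which vanishes by the computation you already did in part~(i). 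This holds for a.e.\ $u$ (and for all $u$ whenever $\ell$ takes values in pointwise-defined functions), which is all that is needed.
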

Proposition~\ref{prop} states that (i) $\mathcal{X}^\smallO$ is uncorrelated with the reconstruction error and (ii) $\mathcal{L}$ is the optimal linear reconstruction operator with respect to the mean squared loss.

\subsection{Estimation}\label{sec:Estimation}

The intrinsic factor model opens a convenient path for estimating the best linear reconstructions. We follow a general approach and estimate the factors $\mathbbm{F}_\smallO$ via principal components. To this end, consider the singular value decomposition
\begin{equation*}
\frac{1}{\sqrt{N T_\smallC}} \mathbbm{Y}_\smallO \approx \mathbbm{U}_\smallO \mathbbm{D}_\smallO \mathbbm{V}_\smallO',
\end{equation*}
where $\mathbbm{D}_\smallO$ is the diagonal matrix containing the first ${r}$ singular values of $\frac{1}{\sqrt{N T_\smallC}} \mathbbm{Y}_\smallO$ in descending order. The columns of $\mathbbm{U}_\smallO$ and $\mathbbm{V}_\smallO$ contain the first ${r}$ left and right eigenvectors, respectively. We then set
\begin{equation}\label{eq:MatrixEstimators}
\widehat{\mathbbm{F}}_\smallO = \sqrt{T_\smallC} \mathbbm{U}_\smallO, \quad \widehat{\mathbbm{f}}_{\smallO,s} = \frac{1}{\sqrt{N}} \mathbbm{y}_{\smallO,s} \mathbbm{V}_\smallO \mathbbm{D}_\smallO^{-1}, \quad \widehat{\mathcal{L}(\mathcal{X}^\smallO_s)}(u_i) = \widehat{\mathbbm{f}}_{\smallO,s} (\widehat{\mathbbm{F}}_\smallO'\widehat{\mathbbm{F}}_\smallO)^{-1} \widehat{\mathbbm{F}}_\smallO' \mathbbm{y}_{\smallC,i},
\end{equation}
adopting the convention that $\frac{1}{T_\smallC}{\widehat{\mathbbm{F}}_\smallO}' {\widehat{\mathbbm{F}}}_\smallO = \mathbbm{I}_{r}$.
\begin{remark}\phantom{}
\begin{itemize}
\item Pursuing the illustration given in Figure~\ref{fig:Illustration}, the missing observation at the interrogation mark ``\textcolor{lightgray}{\textbf{?}}'' is thus imputed by an estimate for $\mathcal{L}(\mathcal{X}^\smallO_3)(u_4)$ which results from a regression of the gray bullets ``\textcolor{lightgray}{$\bullet$}" onto the factor scores of the matrix $\mathbbm{Y}_\smallO$.
\item The procedure relates to a factor-based imputation procedure for multivariate data introduced in \cite{cahan2022factor}; see also \cite{bai2021matrix}. However, the authors essentially  assume in our notation that~$\mathbbm{X}_\smallC$ and~$\mathbbm{X}_\smallO$ have equal rank~$r$. By careful analysis of the reconstruction error, we generalize the concept to the practically relevant case of $\textup{rank}(\mathbbm{X}_\smallC) \gg \textup{rank}(\mathbbm{X}_\smallO) = r \to \infty$, which appears to be a result of its own interest.
\item A separate factor model is fitted for each partially observed curve. While this adds computational cost, it is essential for proper reconstructions as the optimal linear reconstruction depends on the observation set.
\end{itemize}

\end{remark}
An estimator for the continuous reconstruction is then obtained by linear interpolation,
\begin{equation}\label{eq:Estimator}
\begin{aligned}
\widehat{\mathcal{L}(\mathcal{X}^\smallO_s)}(u) &= \sum_{i = 1}^{N-1} \left(\widehat{\mathcal{L}(\mathcal{X}^\smallO_s)}(u_i) + \frac{u - u_i}{u_{i+1}-u_i}(\widehat{\mathcal{L}(\mathcal{X}^\smallO_s)}(u_{i+1})-\widehat{\mathcal{L}(\mathcal{X}^\smallO_s)}(u_i))   \right) \\
&\qquad \times \mathbbm{1} \{u \in [u_i,u_{i+1})\}.
\end{aligned}
\end{equation}
Our estimators depend on a rank $r$ which is typically unknown and must be estimated from the data. The literature on approximate factor models considers several approaches; see for instance \cite{bai2002determining} or \cite{onatski2010determining}. Moreover, \cite{li2017determining} prove consistency of a criterion similar to \cite{bai2002determining} in the case of growing $r \to \infty$. An adaption of this result to our setting falls outside the scope of the underlying work and is of future interest. In what follows, we assume that we were given an estimator $\widehat{r}$ such that $\prob(\widehat{r} = r) \to 1$. In Section~\ref{sec:ChoiceOfParameters}, we choose the number of factors via 5-fold cross-validation.

\subsection{Convergence rates}\label{sec:ConvergenceRates}

For clearer presentation, we suppose that~$T_\smallC \asymp T$ ($T_\smallC/T$ is bounded from above and below) and define $X^{(0)}(u) = X(u)$. Let $C > 1$ be some large absolute constant which may have a different value at each appearance. Consider the following assumptions.

\begin{enumerate}[label = (A\arabic*)]
\setcounter{enumi}{3}
\item \label{ass:Mixing} The mixing coefficients of $(\mathcal{X}_t: t \leq T)$ satisfy $\alpha(h) = \exp(-Ch^{\gamma_1})$ for some $\gamma_1 > 0$;
\item \label{ass:MomentsX} there is some $\delta>0$ such that $\sup_{u} \expv[X_t^{(d)}(u)^{4 + \delta}]<C$; moreover, $\expv[\sup_{u} X_t(u)^2]<C$ as well as $\expv[(\xi^\smallO_k)^4]\leq C(\lambda^\smallO_k)^2$ for any $k \leq r$;
\item\label{ass:MomentsZ} $\expv[\sup_{u} Z_t(u)^2]<C$;
\item \label{ass:Lipschitz}  it holds that $\abs{\text{Cov}(X_t^{(d)}(u), X_t^{(d')}(v)) - \text{Cov}(X_t^{(d)}(u'), X_t^{(d')}(v'))} \leq C \, \norm{(u,v) - (u',v')}$ where~$\norm{(u,v) - (u',v')}^2 = (u-u')^2 + (v-v')^2$;
\item \label{ass:Eigenfunctions} $\sup_{k \leq r} \sup_{u} \abs{{\varphi_k^{\smallO}}^{(d)}(u)} < C$;
\item \label{ass:Errors} the errors $(e^{(d)}_t: t \leq T)$ are i.i.d.~$N$-variate random variables with zero mean, independent of the sequence $(\mathcal{X}_t: t \leq T)$, and satisfy
\begin{enumerate}
\item $\lambda_{\text{max}}(\mathbbm{E}_\smallO\mathbbm{E}_\smallO') = O_p(\max\{N, T\})$,
\item $\sum_{i,j=1}^N \abs{\expv[e_{ti}^{(d)}e^{(d)}_{tj}]} = O(N)$,
\item $\max_{i\leq N} \expv[(e^{(0)}_{ti})^2] < C$;
\end{enumerate}\newpage
\item \label{ass:Tail} there exist $b>0$ and $\gamma_2 > 0$ such that $\gamma = (1/\gamma_1 + 3/\gamma_2)^{-1} < 1$ and for any $\epsilon > 0$,
\begin{enumerate}
\item $\sup_{k\leq r}\prob(\abs{\xi^\smallO_{tk}/\sqrt{\lambda^\smallO_k}}>\epsilon) \leq \exp(-(\epsilon/b)^{\gamma_2})$;
\item $\sup_{u}\prob(\abs{Z_t(u)}>\epsilon) \leq \exp(-(\epsilon/b)^{\gamma_2})$;
\item $\max_{i\leq N}\prob(\abs{e^{(0)}_{ti}}>\epsilon) \leq \exp(-(\epsilon/b)^{\gamma_2})$;
\end{enumerate}
\item \label{ass:Asymptotics} $r/\lambda^\smallO_{r} = o(\min\{\sqrt{N},\sqrt{T/\log(N)}\})$ and $\log(N) = O(T^{\gamma/(2-\gamma)})$.
\end{enumerate}

The definition of $\alpha$-mixing coefficients is given in Section~\ref{sec:Mixing} of the supplementary file.  Assumptions~\ref{ass:Mixing} and \ref{ass:Tail} are similarly considered in \cite{fan2011high,fan2013large} and needed for applying a Bernstein inequality in the proof of Theorem~\ref{thm:Convergence}. 
Moment conditions in Assumptions~\ref{ass:MomentsX} and~\ref{ass:MomentsZ} are satisfied in the special case of Gaussian processes with continuous sample paths (see \cite{landau1970supremum}). Assumption~\ref{ass:Lipschitz} is a Lipschitz condition on the covariance function and implies mean-square continuity. Stronger smoothness conditions such as differentiability are not imposed. Particular examples of processes with Lipschitz continuous covariance functions are given by the Brownian motion or an Ornstein-Uhlenbeck process. Similar to \cite{kneip2020optimal}, we assume bounded eigenfunctions in Assumption~\ref{ass:Eigenfunctions} which holds for a Fourier basis of sine and cosine functions. Consistency of estimators could also be shown if~$\sup_{k \leq r} \sup_{u} \abs{{\varphi_k^{\smallO}}^{(d)}(u)} \to \infty$ as $r\to\infty$ but convergence rates become slower. The conditions on the noise term in Assumption~\ref{ass:Errors} are similar to \cite{bai2021approximate}.  Assumption~\ref{ass:Errors}(a) is automatically satisfied in the special case where $e^{(d)}_{ti}$ are i.i.d.~(across time-section, cross-section, and~$d$) with bounded fourth moments (see Theorem~2 in \cite{latala2005some} and \cite{moon2017dynamic} for a further discussion). In general, Assumption~\ref{ass:Errors} also allows weak dependence among the errors in the cross-section and across $d$. Assumption~\ref{ass:Asymptotics} puts restrictions on the joint asymptotic behavior of $r$, $N$, and~$T$. The second rate ensures that $N$ does not grow too fast compared to $T$.

\begin{theorem}\label{thm:Convergence}
Suppose that Assumptions~\ref{ass:Sample}-\ref{ass:Asymptotics} are satisfied. Then it holds that
\begin{equation}\label{eq:thmConvergence}
\sup_{u \in [0,1]} \, \abs{\widehat{\mathcal{L}(\mathcal{X}^\smallO_s)}(u)-\mathcal{L}(\mathcal{X}^\smallO_s)(u)} = O_p\left(\frac{r}{\lambda^\smallO_{r}}
\sqrt{\frac{1}{N} + \frac{\log(N)}{T}}\right),
\end{equation}
as $r, N, T \to \infty$.
\end{theorem}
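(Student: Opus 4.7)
My plan is to exploit the factor representation $\mathbbm{Y}_\smallO = \mathbbm{F}_\smallO\bbLambda_\smallO' + \mathbbm{E}_\smallO$ from~\eqref{eq:FactorModel} and to rewrite the population target as $\mathcal{L}(\mathcal{X}^\smallO_s)(u_i) = \mathbbm{f}_{\smallO,s}\tilde{\bbLambda}_{\smallC,i}'$, where $\mathbbm{f}_{\smallO,s}=(\xi^\smallO_{sk}/\sqrt{\lambda^\smallO_k})_{k\leq r}$ is the row of $\mathbbm{F}_\smallO$ associated with $s$ and the complete-data loading $\tilde{\bbLambda}_{\smallC,i}$ has $k$-th entry $\sqrt{\lambda^\smallO_k}\tilde{\varphi}^\smallO_k(u_i)$ on the target block and $\sqrt{\lambda^\smallO_k}{\varphi^\smallO_k}^{(d)}(u_i)$ on each covariate block. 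The identity
\[
\mathbbm{y}_{\smallC,i} = \mathbbm{F}_\smallO\tilde{\bbLambda}_{\smallC,i}' + \mathbbm{z}_i + \mathbbm{e}_i
\]
then holds on every complete row, with the deviation vector $\mathbbm{z}_i$ carrying $Z_t(u_i)$ on target grid points $u_i\in\bigM$ and vanishing elsewhere; by Proposition~\ref{prop}(i) and Assumption~\ref{ass:MomentsZ} it is zero-mean, uncorrelated with $\mathcal{X}^\smallO$, and has uniformly bounded second moment.

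The proof then rests on two ingredients. First, adapting the approximate-factor PCA machinery of \cite{bai2003inferential}, \cite{fan2013large}, and \cite{hormann2022consistently} to the diverging-rank regime of Assumption~\ref{ass:Asymptotics}, there is a random rotation $H$ with $\norm{H},\norm{H^{-1}} = O_p(1)$ and $H'H = \mathbbm{I}_r + o_p(1)$ such that
\[
\norm{\widehat{\mathbbm{f}}_{\smallO,s} - H'\mathbbm{f}_{\smallO,s}} = O_p\!\Bigl(\tfrac{r}{\lambda^\smallO_r}(N^{-1/2}+T^{-1/2})\Bigr),
\]
where the $r/\lambda^\smallO_r$ prefactor reflects the weakening factor structure as $\lambda^\smallO_r\to 0$ and the error bound uses Assumption~\ref{ass:Errors}. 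Second, thanks to the PCA normalisation $\widehat{\mathbbm{F}}_\smallO'\widehat{\mathbbm{F}}_\smallO/T_\smallC=\mathbbm{I}_r$, expanding $\tfrac{1}{T_\smallC}\widehat{\mathbbm{F}}_\smallO'\mathbbm{y}_{\smallC,i}$ along the substitutions $\widehat{\mathbbm{F}}_\smallO = \mathbbm{F}_\smallO H + (\widehat{\mathbbm{F}}_\smallO-\mathbbm{F}_\smallO H)$ and $\mathbbm{y}_{\smallC,i} = \mathbbm{F}_\smallO\tilde{\bbLambda}_{\smallC,i}' + \mathbbm{z}_i + \mathbbm{e}_i$ produces three error pieces: a factor-error piece controlled by the previous rate, a rotation-identity piece absorbed by $H'H=\mathbbm{I}_r+o_p(1)$, and the cross-moment piece $\tfrac{1}{T_\smallC}\mathbbm{F}_\smallO'(\mathbbm{z}_i+\mathbbm{e}_i)$. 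The last is pointwise $O_p(\sqrt{r/T})$ by Proposition~\ref{prop}(i) and Assumptions~\ref{ass:MomentsZ},~\ref{ass:Errors}; a Bernstein inequality for $\alpha$-mixing sequences (Assumptions~\ref{ass:Mixing},~\ref{ass:Tail}) then upgrades this to the uniform-in-$i$ bound $O_p(\sqrt{r\log N/T})$, which is the sole source of the $\log N$ factor in the final rate.

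Telescoping via
\[
\widehat{\mathcal{L}(\mathcal{X}^\smallO_s)}(u_i)-\mathcal{L}(\mathcal{X}^\smallO_s)(u_i) = (\widehat{\mathbbm{f}}_{\smallO,s}-\mathbbm{f}_{\smallO,s}H)\tfrac{1}{T_\smallC}\widehat{\mathbbm{F}}_\smallO'\mathbbm{y}_{\smallC,i} + \mathbbm{f}_{\smallO,s}\bigl(H\tfrac{1}{T_\smallC}\widehat{\mathbbm{F}}_\smallO'\mathbbm{y}_{\smallC,i}-\tilde{\bbLambda}_{\smallC,i}'\bigr),
\]
and bounding $\norm{\mathbbm{f}_{\smallO,s}}=O_p(\sqrt{r})$ (Assumption~\ref{ass:MomentsX}) together with $\norm{\tilde{\bbLambda}_{\smallC,i}}=O(1)$ (eigenvalue summability plus Assumption~\ref{ass:Eigenfunctions}) delivers the claimed rate at every grid point $u_i$, uniformly in $i$. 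Extension to uniform-in-$u\in[0,1]$ is then a routine interpolation argument based on Assumption~\ref{ass:Lipschitz} that does not inflate the main rate. The main obstacle I anticipate is the uniform-in-$i$ control of $\tfrac{1}{T_\smallC}\mathbbm{F}_\smallO'\mathbbm{z}_i$: because $\mathbbm{X}_\smallC$ itself does not sit inside a rank-$r$ factor model, the matrix-completion arguments of \cite{bai2021matrix} and \cite{cahan2022factor} do not apply, and one has to combine the orthogonality from Proposition~\ref{prop}(i) with the sub-exponential tail of $Z_t(u)$ in Assumption~\ref{ass:Tail}(b) and the mixing-based Bernstein inequality to secure the $\sqrt{\log N}$ factor without inflating the $r/\lambda^\smallO_r$ prefactor.
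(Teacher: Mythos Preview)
Your overall strategy---factor representation, rotation to align $\widehat{\mathbbm{F}}_\smallO$ with $\mathbbm{F}_\smallO$, Bernstein-type uniform bound on $\tfrac{1}{T_\smallC}\mathbbm{F}_\smallO'(\mathbbm{z}_i+\mathbbm{e}_i)$, and linear interpolation---matches the paper. The gap is your claim that the rotation $H$ satisfies $\norm{H},\norm{H^{-1}}=O_p(1)$ and $H'H=\mathbbm{I}_r+o_p(1)$. In the diverging-rank/weak-factor regime this fails: the paper shows (Lemma~\ref{lemma:H}) that the natural rotation $\mathbbm{H}_\smallO=(\widehat{\mathbbm{F}}_\smallO'\mathbbm{F}_\smallO/T_\smallC)^{-1}$ has $\normS{\mathbbm{H}_\smallO}=O_p(1/\sqrt{\lambda^\smallO_r})$, which diverges. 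What \emph{is} approximately orthogonal is the rescaled matrix $\mathbb{A}=\mathbbm{B}_\smallO^{-1}\widehat{\mathbbm{F}}_\smallO'\mathbbm{F}_\smallO\mathbbm{B}_\smallO/T_\smallC$ (Lemma~\ref{lemma:BFFB}), but conjugating back by $\mathbbm{B}_\smallO=\text{diag}(\sqrt{\lambda^\smallO_k})$ destroys orthogonality unless $\mathbb{A}$ commutes with $\mathbbm{B}_\smallO$, which would require eigenvalue-gap assumptions the paper does not impose. Consequently your ``rotation-identity piece absorbed by $H'H=\mathbbm{I}_r+o_p(1)$'' cannot be controlled as written, and your telescoping does not close.

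The paper sidesteps the need for $HH'\approx\mathbbm{I}_r$ by using \emph{two} rotations, $\mathbbm{G}_\smallO$ for the factors and $(\mathbbm{H}_\smallO')^{-1}$ for the extrapolated loadings, and decomposing
\[
\widehat{\bblambda}^*_{\smallO,i}\widehat{\mathbbm{f}}_{\smallO,s}'-\bblambda^*_{\smallO,i}\mathbbm{f}_{\smallO,s}'
=(\widehat{\bblambda}^*_{\smallO,i}-\bblambda^*_{\smallO,i}(\mathbbm{H}_\smallO')^{-1})(\mathbbm{f}_{\smallO,s}\mathbbm{H}_\smallO)'
+\widehat{\bblambda}^*_{\smallO,i}(\widehat{\mathbbm{f}}_{\smallO,s}-\mathbbm{f}_{\smallO,s}\mathbbm{G}_\smallO)'
+\widehat{\bblambda}^*_{\smallO,i}(\mathbbm{G}_\smallO-\mathbbm{H}_\smallO)'\mathbbm{f}_{\smallO,s}',
\]
so that $(\mathbbm{H}_\smallO')^{-1}$ and $\mathbbm{H}_\smallO'$ cancel \emph{exactly} in the leading piece without any orthogonality assumption. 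The price is the extra factor $\normS{\mathbbm{H}_\smallO}=O_p(1/\sqrt{\lambda^\smallO_r})$ multiplying the loading error in the first term, and a separate lemma (Lemma~\ref{lemma:Rotation}) showing $\normS{\mathbbm{G}_\smallO-\mathbbm{H}_\smallO}=O_p\bigl(\tfrac{1}{\lambda^\smallO_r}\sqrt{r/N+1/T}\bigr)$ to handle the third term. It is exactly this $\normS{\mathbbm{H}_\smallO}$ factor, combined with the $\sqrt{r/\lambda^\smallO_r}$ in the loading error (Lemma~\ref{lemma:RateLambda}) and $\norm{\mathbbm{f}_{\smallO,s}}=O_p(\sqrt{r})$, that produces the full $r/\lambda^\smallO_r$ prefactor in~\eqref{eq:thmConvergence}. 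Your outline, by assuming a bounded orthogonal rotation, would either fail to close the identity or implicitly understate the rate.
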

Related to a discussion in \cite[p.140-141]{bai2003inferential}, we remark that the limit in Theorem~\ref{thm:Convergence} is simultaneous in the sense that $(r, N, T)$ is allowed to grow along all paths which satisfy the restrictions in Assumption~\ref{ass:Asymptotics}. The term $\sqrt{1/N + \log(N)/T}$ in~\eqref{eq:thmConvergence} is similarly derived by \cite{fan2013large} in approximate factor models with fixed $r$; the factor $(r/\lambda^\smallO_{r})$ reflects our assumption of a growing rank. In fact, the $r$-th eigenvalue $\lambda^\smallO_{r}$ measures pervasiveness of the weakest factor score. For this reason, a faster decay of eigenvalues entails a more difficult reconstruction problem and $\lambda^\smallO_{r}$ therefore enters~\eqref{eq:thmConvergence} in the inverse.

Theorem~\ref{thm:Convergence} includes the special case without additional covariates where $\mathcal{X}_s(u) = X_s(u)$ constitutes a univariate random function. Let us have a closer look at this case and compare Theorem~\ref{thm:Convergence} with the related Theorem~4.2 in \cite{kneip2020optimal}. From a theoretical perspective, \cite{kneip2020optimal} consider infinite dimensional functional data and derive a nonparametric estimator of the approximation $\mathcal{L}(X^\smallO_s)(u) = \sum_{k=1}^{r} \xi^\smallO_{sk}\tilde{\varphi}^\smallO_k(u)$ to $\sum_{k=1}^\infty \xi^\smallO_{sk}\tilde{\varphi}^\smallO_k(u)$ (employing our notation in the sequel). We note that in our setting $r$ represents the growing dimension of the function space, while it serves as a truncation parameter in \cite{kneip2020optimal}. Moreover, \cite{kneip2020optimal} assume almost surely twice continuously differentiable functions which helps to control the discretization error and is particularly useful in settings where the number of observation points $N$ is comparably small. Assuming that $N$ grows at least proportional to~$T$ and $\lambda^\smallO_k = k^{-\nu}$ for some $\nu > 1$, the estimator~$\widehat{\mathcal{L}_\text{KL}(X^\smallO_s)}$ of \cite{kneip2020optimal} satisfies $\abs{\widehat{\mathcal{L}_\text{KL}(X^\smallO_s)}(u) - \mathcal{L}(X_s^\smallO)(u)} = O_p(r^{\nu/2+5/2}\sqrt{1/T})$ for any fixed~$u \in [0,1]$. In contrast, our rate is uniform over $u\in [0,1]$. Inspection of the proof of Theorem 1 actually shows that the $\log(N)$ term in \eqref{eq:thmConvergence} is introduced by taking the supremum over $u \in [0,1]$. Without the supremum, \eqref{eq:thmConvergence} reduces in the case of relatively large $N$ to $\abs{\widehat{\mathcal{L}(X_s^\smallO)}(u)-\mathcal{L}(X_s^\smallO)(u)} = O_p(r^{\nu + 1}\sqrt{1/T})$. This shows that for large $N$, our rate improves \cite{kneip2020optimal} when rough functions with $\nu \in (1, 3)$ are considered.  

Our approach can also be applied if $\bigO = [0,1]$. It can then be used to recover the underlying signal $X_s(u)$ from the discrete and noisy observations in \eqref{eq:Measurements}. In fact, it holds $\mathcal{L}(\mathcal{X}^\smallO_s)(u) = X_s(u)$ for any $u \in [0,1]$ and we denote the corresponding estimator $\widehat{X}_s(u) = \widehat{\mathcal{L}(\mathcal{X}^\smallO_s)}(u)$. The following corollary is immediate.

\begin{corollary}\label{cor:Convergence}
Suppose that Assumptions~\ref{ass:Sample}-\ref{ass:Asymptotics} hold for $\bigO = [0,1]$. Then it holds that
\begin{equation*}
\sup_{u \in [0,1]} \, \abs{\widehat{X}_s(u)-X_s(u)} = O_p\left(\frac{r}{\lambda^\smallO_{r}}
\sqrt{\frac{1}{N} + \frac{\log(N)}{T}}\right),
\end{equation*}
as $r, N, T \to \infty$.
\end{corollary}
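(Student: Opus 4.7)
The plan is to deduce Corollary~\ref{cor:Convergence} directly from Theorem~\ref{thm:Convergence} by specializing to the full observation set $\bigO = [0,1]$ and identifying the population target with $X_s$ itself. The central observation is that when $\bigO = [0,1]$ the missing set $\bigM$ is empty, so $X^\smallO \equiv X$ and $\mathcal{X}^\smallO \equiv \mathcal{X}$. The reconstruction error $Z$ defined in \eqref{eq:ReconstructionError} vanishes on $\bigO$ by construction; with $\bigO = [0,1]$ this forces $Z \equiv 0$ on all of $[0,1]$, and hence $\mathcal{L}(\mathcal{X}^\smallO_s)(u) = X_s(u)$ for every $u \in [0,1]$. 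This is simply the extension to the full domain of the pointwise identity recorded just after equation~\eqref{eq:Operator}.

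Next, I would verify that the hypotheses of Theorem~\ref{thm:Convergence} remain available in this specialization. Assumption~\ref{ass:MomentsZ} and part~(b) of Assumption~\ref{ass:Tail} are trivially satisfied because $Z_s \equiv 0$; the remaining assumptions do not refer to $Z$ and hold verbatim in the degenerate case $\bigO = [0,1]$. Applying Theorem~\ref{thm:Convergence} with this choice of $\bigO$ then yields
\[
\sup_{u\in[0,1]} \bigl|\widehat{\mathcal{L}(\mathcal{X}^\smallO_s)}(u) - \mathcal{L}(\mathcal{X}^\smallO_s)(u)\bigr| = O_p\!\left(\frac{r}{\lambda^\smallO_r}\sqrt{\frac{1}{N} + \frac{\log(N)}{T}}\right).
\]

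Finally, substituting the definition $\widehat{X}_s(u) = \widehat{\mathcal{L}(\mathcal{X}^\smallO_s)}(u)$ together with the identity $\mathcal{L}(\mathcal{X}^\smallO_s)(u) = X_s(u)$ established above transforms the left-hand side of the displayed bound into $\sup_{u\in[0,1]} |\widehat{X}_s(u) - X_s(u)|$, which yields the claimed rate. I do not anticipate any genuine obstacle beyond this identification; all of the analytical work has already been absorbed into the proof of Theorem~\ref{thm:Convergence}, and the only thing one needs to confirm is that its proof does not implicitly rely on $\bigM$ being nonempty (which is evident from the factor model~\eqref{eq:FactorModel} attached to an arbitrary $\bigO$).
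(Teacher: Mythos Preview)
Your proposal is correct and matches the paper's approach exactly: the paper simply declares the corollary ``immediate'' after noting that $\mathcal{L}(\mathcal{X}^\smallO_s)(u) = X_s(u)$ when $\bigO = [0,1]$ and defining $\widehat{X}_s(u) = \widehat{\mathcal{L}(\mathcal{X}^\smallO_s)}(u)$, which is precisely the specialization you spell out in detail.
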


The above result is of independent interest for preprocessing noisy functional data and should be compared to a corresponding rate derived in \cite{hormann2022consistently}. While focusing on uniform convergence rates for the maximum over $s\leq T$, the authors show in our notation that $\max_{i \leq N} \abs{\widehat{X}_s(u_i) - X_s(u_i)} = O_p(1/T^{1/4} + T^{1/4}/\sqrt{N})$ (for simplicity, we consider the rank to be fixed). As indicated by Corollary~\ref{cor:Convergence}, careful analysis of the involved quantities allows us to improve this bound to $O_p(\sqrt{1/N + \log(N)/T})$.
\section{Simultaneous prediction bands}\label{sec:PredictionBands}

Once convergence of the reconstruction estimator is understood, questions concerning inferential procedures arise. In practical applications, prediction bands are useful to assess the reconstruction accuracy. A method for constructing such bands in the context of partially observed functional data has been suggested by \cite{kraus2015components}. However, as Kraus' method assumes continuous measurements with no errors, the construction of prediction bands in the noisy observation regime~\eqref{eq:Measurements} remains an open question and is addressed in the following. The prediction bands we consider are of the form
\begin{equation*}
\widehat{\mathcal{L}(\mathcal{X}_s^\smallO)}(u)\pm q\times \omega(u), \qquad u\in\mathrm{M},
\end{equation*}
where $\omega: \bigM \to \mathbb{R}^+$ is some limiting function and $q$ a scalar which determines the width of the prediction band. To incorporate special characteristics of the curve, we choose $\omega(u) = \text{sd}(Z(u))$, noting that the proposed procedure can be easily adapted to other limiting functions as well. Throughout this section, we require a uniformly consistent estimator $\widetilde{X}_t$ of completely observable curves ($t \in \mathcal{T}$) as stated below.

\begin{enumerate}[label = (A\arabic*)]
\setcounter{enumi}{11}
\item \label{ass:Bands} It holds that $\sup_{u \in \bigM} \abs{\widetilde{X}_t(u) - X_t(u)} = o_p(1)$ for $t \in \mathcal{T}$.
\end{enumerate}

Since we do note impose additional conditions other than consistency in Assumption~\ref{ass:Bands}, the procedure is flexible and combinable with a wide selection of smoothers. If the assumptions of Corollary~\ref{cor:Convergence} are satisfied, one may for instance use our factor-based estimator~$\widehat{X}_t$. To approximate the limiting function~$\omega(u) = \text{sd}(Z(u))$, we define
\begin{align*}
\widehat{\omega}(u) = \sqrt{\frac{1}{T_\smallC} \sum_{t \in \mathcal{T}} (\widehat{Z}_t(u) - \mean{Z}(u) )^2}, && \mean{Z}(u) = \frac{1}{T_\smallC} \sum_{t \in \mathcal{T}} \widehat{Z}_t(u), \qquad u \in \bigM,
\end{align*}
where $\widehat{Z}_t(u) = \widetilde{X}_t(u) - \widehat{\mathcal{L}(\mathcal{X}_t^\smallO)}(u)$. Under our assumptions, one can show that $\sup_{u \in \bigM} \abs{\widehat{\omega}(u) - \omega(u)} = o_p(1)$. Simultaneous prediction bands are then based on the following result. 
\begin{theorem}\label{thm:PredictionBands}
Let $\alpha \in (0,1)$ and suppose that Assumptions~\ref{ass:Sample}-\ref{ass:Bands} are satisfied. If $\omega$ is bounded from above and below,
\begin{equation*}
\lim \, \prob\left(\sup_{u \in \bigM}\, \frac{ \abs{\widehat{\mathcal{L}(\mathcal{X}_s^\smallO)}(u)-X_s(u)}}{\widehat{\omega}(u)} > q_\alpha \right) \leq \alpha,
\end{equation*}
where $q_\alpha$ is the $(1-\alpha)$-quantile of  the variable $\zeta_t = \sup_{u \in \bigM} \{\abs{Z_t(u)}/\omega(u)\}$ and the limit is taken as~$r, N, T \to \infty$.
\end{theorem}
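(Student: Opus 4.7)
The idea is to replace, under the supremum, the standardized estimation residual by its population counterpart $|Z_s(u)|/\omega(u)$ and then invoke a Portmanteau-type argument together with the definition of $q_\alpha$. Writing $X_s(u)=\mathcal{L}(\mathcal{X}_s^\smallO)(u)+Z_s(u)$ from \eqref{eq:ReconstructionError},
\begin{equation*}
V_n:=\sup_{u\in\bigM}\frac{|\widehat{\mathcal{L}(\mathcal{X}_s^\smallO)}(u)-X_s(u)|}{\widehat{\omega}(u)}\le A_n+B_n,
\end{equation*}
where $A_n:=\sup_{u\in\bigM}|\widehat{\mathcal{L}(\mathcal{X}_s^\smallO)}(u)-\mathcal{L}(\mathcal{X}_s^\smallO)(u)|/\widehat{\omega}(u)$ and $B_n:=\sup_{u\in\bigM}|Z_s(u)|/\widehat{\omega}(u)$. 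The plan is to show $A_n=o_p(1)$ and $B_n=\zeta_s+o_p(1)$, so that $V_n=\zeta_s+o_p(1)$, and then conclude via a Slutsky-type argument.

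Term $A_n$ is handled directly by Theorem~\ref{thm:Convergence}: the rate $(r/\lambda_r^\smallO)\sqrt{1/N+\log(N)/T}$ is $o_p(1)$ under Assumption~\ref{ass:Asymptotics}. Since $\omega$ is bounded away from $0$ and the in-text claim $\sup_{u\in\bigM}|\widehat{\omega}(u)-\omega(u)|=o_p(1)$ forces $1/\widehat{\omega}$ to be uniformly $O_p(1)$, the product is $o_p(1)$. For $B_n$,
\begin{equation*}
|B_n-\zeta_s|\le \sup_{u\in\bigM}|Z_s(u)|\cdot\sup_{u\in\bigM}\biggl|\frac{1}{\widehat{\omega}(u)}-\frac{1}{\omega(u)}\biggr|,
\end{equation*}
which is $O_p(1)\cdot o_p(1)$ by Assumption~\ref{ass:MomentsZ} (giving $\expv[\sup_u Z_s(u)^2]<\infty$) together with uniform consistency of $\widehat{\omega}$ and the lower bound on $\omega$.

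With $V_n=\zeta_s+o_p(1)$ in hand, the conclusion follows from a standard Slutsky argument: for any $\delta>0$, $\prob(V_n>q_\alpha)\le\prob(\zeta_s>q_\alpha-\delta)+\prob(|V_n-\zeta_s|>\delta)$; taking $\limsup_n$ and then $\delta\downarrow 0$ yields $\limsup\prob(V_n>q_\alpha)\le\prob(\zeta_s\ge q_\alpha)$. Conditional on $\bigO_s$, Assumptions~\ref{ass:Sample} and~\ref{ass:MCAR} make the $\zeta_t$ identically distributed copies of $\zeta_s$, so $q_\alpha$ is the $(1-\alpha)$-quantile of $\zeta_s$, and at any continuity point of its distribution $\prob(\zeta_s\ge q_\alpha)\le\alpha$.

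The main technical obstacle is justifying the auxiliary claim $\sup_{u\in\bigM}|\widehat{\omega}(u)-\omega(u)|=o_p(1)$. Because every $\widehat{Z}_t$ is built from the same estimated factors $\widehat{\mathbbm{F}}_\smallO$, the $\widehat{Z}_t$ are not independent replicates of $Z_t$, so the plug-in error must be cleanly separated from the ergodic fluctuations of the true $Z_t$. I would decompose $\widehat{Z}_t(u)-Z_t(u)$ into a smoothing part (controlled by Assumption~\ref{ass:Bands}) and a reconstruction part (controlled by Theorem~\ref{thm:Convergence} applied uniformly in $t\in\mathcal{T}$, at the cost of a union bound and an extra logarithmic factor), combine this with the $\alpha$-mixing ergodic theorem afforded by Assumptions~\ref{ass:Sample} and~\ref{ass:Mixing} to obtain pointwise convergence of the sample variance of $\widehat{Z}_t$ to $\omega(u)^2$, and finally upgrade to uniform convergence over $\bigM$ via the Lipschitz regularity of the covariance in Assumption~\ref{ass:Lipschitz}.
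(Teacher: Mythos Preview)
Your argument is correct and matches the paper's approach: split the standardized residual into an estimation part (handled by Theorem~\ref{thm:Convergence}) plus $\zeta_s$ modulo an $\widehat{\omega}/\omega$ correction, then pass to the limit via a Slutsky-type inequality. The only cosmetic difference is that the paper isolates $C=\sup_{u\in\bigM}\omega(u)/\widehat{\omega}(u)$ as a separate multiplicative factor rather than absorbing the correction additively into your $B_n$; note also that your decomposition only yields $V_n\le\zeta_s+o_p(1)$ (not equality), but this one-sided bound is exactly what the conclusion requires.
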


The above result is based on the unknown value of $q_\alpha$. \cite{kraus2015components} proposes to estimate $q_\alpha$ in a similar setting by sampling from Gaussian processes. Here, we pursue a different approach and define for $t \in \mathcal{T}$,
\begin{equation*}
\widehat{\zeta}_t = \sup_{u \in \bigM} \frac{\abs{\widehat{\mathcal{L}(\mathcal{X}_t^\smallO)}(u)-\widetilde{X}_t(u)}}{\widehat{\omega}(u)}.
\end{equation*}
We then estimate $q_\alpha$ by the empirical $(1-\alpha)$-quantile $\widehat{q}_\alpha$ of $(\widehat{\zeta}_t: t \in \mathcal{T})$.

\begin{proposition}\label{prop:Quantile}
Let the assumptions of Theorem~\ref{thm:PredictionBands} be satisfied and suppose that the cdf $F_\zeta$ of $\zeta$ is continuous. Then, $F_\zeta(\widehat{q}_\alpha) \to 1-\alpha$ in probability as~$r, N, T \to \infty$.
\end{proposition}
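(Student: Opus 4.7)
The plan is to establish the uniform convergence in probability $\sup_q |\widehat{F}(q) - F_\zeta(q)| = o_p(1)$ for $\widehat{F}(q) = T_\smallC^{-1}\sum_{t \in \mathcal{T}}\mathbbm{1}\{\widehat{\zeta}_t \leq q\}$, and then pass from cdf convergence to the quantile level via continuity of $F_\zeta$. I split the argument into an oracle step, a plug-in step, and a quantile step.

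\textbf{Oracle step.} Let $F_{T_\smallC}(q) = T_\smallC^{-1}\sum_{t \in \mathcal{T}}\mathbbm{1}\{\zeta_t \leq q\}$. Because $\zeta_t$ is a measurable functional of $(\mathcal{X}_t, \bigO_t)$ with the observation set $\bigO = \bigO_s$ treated as fixed (recall $\bigO_t$ are i.i.d.\ and independent of $\mathcal{X}_t$ by \ref{ass:MCAR}), the sequence $(\zeta_t)$ inherits strict stationarity and $\alpha$-mixing from \ref{ass:Sample}, hence is ergodic. Pointwise ergodicity of $F_{T_\smallC}(q) \to F_\zeta(q)$, combined with monotonicity in $q$ and the continuity of $F_\zeta$, yields the standard Glivenko--Cantelli conclusion $\sup_q |F_{T_\smallC}(q) - F_\zeta(q)| = o_p(1)$.

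\textbf{Plug-in step.} Set $\Delta_t = \widehat{\zeta}_t - \zeta_t$. Writing out the two suprema and using $\bigl||a|-|b|\bigr|\leq |a-b|$ together with $|a/\widehat{\omega}-b/\omega|\leq |a-b|/|\widehat{\omega}| + |b|\,|\omega-\widehat{\omega}|/(|\omega\widehat{\omega}|)$ gives
\begin{equation*}
|\Delta_t| \le \sup_{u\in\bigM}\frac{\bigl|\widehat{\mathcal{L}(\mathcal{X}_t^\smallO)}(u) - \mathcal{L}(\mathcal{X}_t^\smallO)(u)\bigr| + \bigl|\widetilde{X}_t(u)-X_t(u)\bigr|}{\widehat{\omega}(u)} + \zeta_t\,\sup_{u\in\bigM}\frac{|\widehat{\omega}(u)-\omega(u)|}{\widehat{\omega}(u)}.
\end{equation*}
Theorem~\ref{thm:Convergence} controls the first numerator, \ref{ass:Bands} controls the second, and the remark preceding the proposition gives $\sup_{u\in\bigM}|\widehat{\omega}(u)-\omega(u)| = o_p(1)$; since $\omega$ is bounded from below, $\widehat{\omega}$ is eventually bounded below too, yielding $\Delta_t = o_p(1)$ for each $t$. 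By strict stationarity, $\prob(|\Delta_t|>\epsilon)$ is independent of $t$, so Markov's inequality gives $T_\smallC^{-1}\sum_t\mathbbm{1}\{|\Delta_t|>\epsilon\} = o_p(1)$. Sandwiching
\begin{equation*}
F_{T_\smallC}(q-\epsilon) - T_\smallC^{-1}\!\!\sum_t\mathbbm{1}\{|\Delta_t|>\epsilon\} \le \widehat{F}(q) \le F_{T_\smallC}(q+\epsilon) + T_\smallC^{-1}\!\!\sum_t\mathbbm{1}\{|\Delta_t|>\epsilon\},
\end{equation*}
taking the oracle step inside, and letting $\epsilon\to 0$ using uniform continuity of $F_\zeta$ on compacts yields $\sup_q|\widehat{F}(q)-F_\zeta(q)| = o_p(1)$.

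\textbf{Quantile step.} Because $F_\zeta$ is continuous and strictly monotone near $q_\alpha$ (where it takes the value $1-\alpha$), the uniform cdf convergence translates into $\widehat{q}_\alpha \to q_\alpha$ in probability via the standard continuous-mapping argument for the generalized inverse. Continuity of $F_\zeta$ at $q_\alpha$ then gives $F_\zeta(\widehat{q}_\alpha)\to F_\zeta(q_\alpha) = 1-\alpha$ in probability.

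The main obstacle is the plug-in step, specifically obtaining $\Delta_t = o_p(1)$ uniformly enough to transfer from the pointwise $O_p$-rate of Theorem~\ref{thm:Convergence} (stated for a single target curve $s$) to \emph{each} $t\in\mathcal{T}$ used as an artificial target. I expect this to follow from reapplying the proof of Theorem~\ref{thm:Convergence} to an arbitrary $t\in\mathcal{T}$ with the same common factor-model fit (the contribution of any single curve is asymptotically negligible since $T_\smallC\to\infty$), together with the stationarity argument above. Verifying $\sup_{u\in\bigM}|\widehat{\omega}(u)-\omega(u)| = o_p(1)$ is a parallel sub-issue: it reduces, via the same bound for $\Delta_t$, to a stationary LLN for the empirical variance of $(Z_t(u))_{t\in\mathcal{T}}$ combined with uniform control in $u\in\bigM$, which is provided by \ref{ass:Lipschitz} and a chaining/bracketing argument.
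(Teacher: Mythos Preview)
Your three-step decomposition (Glivenko--Cantelli, plug-in perturbation, quantile conversion) is the same strategy the paper uses, but two points need fixing.

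In the quantile step you invoke strict monotonicity of $F_\zeta$ near $q_\alpha$, which is not among the hypotheses; only continuity is assumed. The paper avoids the intermediate claim $\widehat{q}_\alpha\to q_\alpha$ altogether: since $\widehat{q}_\alpha$ is by construction the empirical $(1-\alpha)$-quantile of $(\widehat{\zeta}_t)_{t\in\mathcal{T}}$, one has $|\widehat{F}(\widehat{q}_\alpha)-(1-\alpha)|\leq 1/T_\smallC$ automatically, and combining this with the uniform closeness $\sup_q|\widehat{F}(q)-F_\zeta(q)|=o_p(1)$ you already established yields $F_\zeta(\widehat{q}_\alpha)\to 1-\alpha$ directly. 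Your detour through $\widehat{q}_\alpha\to q_\alpha$ is both unnecessary and, without strict monotonicity, unjustified.

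In the plug-in step, the assertion that $\prob(|\Delta_t|>\epsilon)$ is independent of $t$ ``by strict stationarity'' is not correct as written: $\Delta_t$ is a function of the \emph{entire} sample (through $\widehat{\omega}$, the common factor fit, and $\widetilde{X}_t$), not of $\mathcal{X}_t$ alone, so marginal stationarity of $(\mathcal{X}_t)$ does not transfer to $(\Delta_t)$. What one actually needs is $\max_{t\in\mathcal{T}}\prob(|\Delta_t|>\epsilon)\to 0$, i.e.\ that the $o_p(1)$ conclusions of Theorem~\ref{thm:Convergence} and Assumption~\ref{ass:Bands} hold uniformly in the target index; you correctly identify this as the main obstacle in your closing paragraph, but the Markov-plus-stationarity line above it does not deliver it. The paper's proof invokes $|\zeta_t-\widehat{\zeta}_t|=o_p(1)$ at the same level of informality and does not spell this out either.
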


Using Theorem~\ref{thm:PredictionBands} and Proposition~\ref{prop:Quantile}, we are finally able to state simultaneous prediction bands,
\begin{equation}\label{eq:PredBands}
\widehat{\mathcal{L}(\mathcal{X}_s^\smallO)}(u) \pm \widehat{q}_\alpha \times \widehat{\omega}(u), \qquad u \in \bigM.
\end{equation}
The proposed procedure is easy to implement and does not require computationally expensive bootstrap samples. We study numerical properties in Section~\ref{sec:AdequacyOfPredictionBands}.

\section{Finite sample properties}\label{sec:FiniteSampleProps}

We consider the following processes,
\begin{equation*}
X_t(u) = \mu(u) + \sum_{k=1}^{50} \xi^{(0)}_{tk} \phi^{(0)}_k(u), \quad X^{(1)}_t(u) = \int_0^1 \beta(u,v)X_t(v)\, \dv, \quad u \in [0,1],
\end{equation*}
where we define $\mu(u) = \sin(\pi u)$, $\phi^{(0)}_{2\ell - 1}(u) = \sqrt{2}\sin(2\ell \pi u)$, as well as $\phi^{(0)}_{2\ell}(u) = \sqrt{2}\cos(2\ell \pi u)$ for any $\ell = 1, \dots, 25$. Furthermore, we consider i.i.d.~standard normal distributed variables $\eta_{tk} \sim N(0,1)$ and set $\xi^{(0)}_{tk} = (\lambda_k^{(0)})^{1/2}\eta_{tk}$. We study both an exponential decay of eigenvalues, $\lambda_k^{(0)} = e^{-k}$, and a polynomial decay, $\lambda_k^{(0)} = k^{-3}/2$. Concerning the covariate, set $\beta(u,v) = \sum_{k,\ell=1}^2 b_{k\ell} \phi^{(0)}_{k}(u)\phi^{(0)}_{\ell}(v)$ for $(b_{11}, b_{12}, b_{21}, b_{22}) = (1.1, 0.7, 0.5, 0.3)$. The processes are recorded on a grid $0 = u_1 < u_2 < \dots < u_N = 1$ of $N = 51$ equispaced points and measured with error,
\begin{equation*}
y_{ti}^{(d)} = x_{ti}^{(d)} + e_{ti}^{(d)}, \qquad d \in \{0,1\},
\end{equation*}
where $e_{ti}^{(d)}$ constitute i.i.d.~$N(0,\sigma_e^2)$ distributed random variables. We suppose that $T_\smallC \in \{50, 100\}$ training targets are completely observable whereas 50 test targets, indexed by $\ell = 1, \dots, 50$, are only observable on $\bigO_\ell = [0, D_\ell]\subset [0,1]$. Define $D_\ell \sim U(1/2, 3/4)$ (Setting~A) and $D_\ell \sim U(1/4, 3/4)$ (Setting~B). The subintervals $\bigO_\ell$ in Setting~A have mean length $5/8$ and are generally larger than the sets in Setting~B with length $1/2$. To assess empirical performance of different reconstruction procedures, we generate samples in $B = 100$ simulation runs indexed by $b = 1, \dots, B$. The \textit{maximum absolute error} (MAE$_b$) in run $b$ is then estimated by
\begin{equation*}
\text{MAE}_b = \frac{1}{50} \sum_{\ell =1}^{50} \max_{i \leq N} \, \abs{X_{b,\ell}(u_i) - \widehat{X}_{b,\ell}(u_i)},
\end{equation*}
where $\widehat{X}_{b,\ell}$ denotes some reconstruction of $X_{b,\ell}$ computed on the sample $b$. We further define
\begin{equation*}
\text{MAE} = \frac{1}{B} \sum_{b=1}^B \text{MAE}_b, \qquad \text{SD} = \sqrt{\frac{1}{B} \sum_{b=1}^B (\text{MAE}_b-\text{MAE})^2},
\end{equation*}
measuring the mean and standard deviation of maximum absolute errors over all simulation runs.

All simulations are carried out in R (\cite{R}) using the packages $\texttt{ReconstPoFD}$ of \cite{reconstpofd} and \texttt{fdapace} of \cite{fdapace}. The code can be downloaded from the first author's GitHub repository \texttt{fdReconstruct}.

\subsection{Choice of parameters}\label{sec:ChoiceOfParameters}

The estimator of the best reconstruction $\widehat{\mathcal{L}(\mathcal{X}^{\smallO})}$ in~\eqref{eq:Estimator} depends on an unknown rank $r$ as well as the choice of weights $w_d$. Below, we discuss empirical approaches.

\subsubsection{Choice of weights}

As noted by \cite{happ2018multivariate}, a suitable choice of the weights is particularly important in the case where the components of $\mathcal{X}$ differ in ranges (for instance if they are measured in different units). We follow their empirical approach and set
\begin{equation*}
w_0 = \left(\int_0^1 \widehat{\var}(X(u)) \,\du \right)^{-1}, \qquad w_1 = \left(\int_0^1 \widehat{\var}(X^{(1)}(u)) \,\du \right)^{-1},
\end{equation*}
where the variance is estimated from the completely observable part of the data. This ensures that all components contribute equally to the total variation (\cite{happ2018multivariate}).

\subsubsection{Choice of rank}

Once the weights are specified, we compute $\bigO$-specific numbers of factors using 5-fold cross validation on the completely observable subsample. An implementation of the cross-validation procedure is shown in Algorithm~\ref{alg:cross-valitation}.

\begin{algorithm}
\caption{$K$-fold cross-validation for choice of rank $r$}\label{alg:cross-valitation}
\begin{algorithmic}[1]
\State Select $\bigO$ and $r_\text{max}$;
\State Split $\mathcal{T}$ into $K$ index sets $\mathcal{T}_1, \mathcal{T}_2, \dots, \mathcal{T}_K$ of approximately equal size;
\State \textbf{For} $r = 1, \dots, r_\text{max}$:
\State \indent $\text{SSE}(r) \leftarrow 0$;
\State \indent \textbf{For} $k = 1, \dots, K$:
\State \indent \indent \textbf{For} $s$ in $\mathcal{T}_k$:
\State \indent \indent \indent Compute the rank-$r$ reconstruction $\widehat{\mathcal{L}_r(\mathcal{X}_s^\smallO)}$ of the pseudo-observed part $\mathbbm{y}_{\smallO, s}$ \indent \indent \indent using the subsample $(\mathbbm{y}_t: t \in \mathcal{T}\setminus \mathcal{T}_k)$;
\State \indent \indent \indent $\text{SSE}(r) \leftarrow \text{SSE}(r) + \sum_{i=1}^N (\widehat{\mathcal{L}_r(\mathcal{X}_s^\smallO)}(u_i) - \mathbbm{y}_{si})^2\mathbbm{1}\{u_i \in [0,1]\setminus \bigO\}$;
\State \textbf{Return} $\widehat{r} = \arg \min_r\, \text{SSE}(r)$.
\end{algorithmic}
\end{algorithm}

\subsection{Comparison of reconstruction procedures}

Next, we compare our proposed methodology to alternative procedures. Concerning our factor-based estimator~\eqref{eq:Estimator}, we consider both the univariate version~(\textsc{Uni}), where the covariate $X^{(1)}$ is ignored, as well as the multivariate version (\textsc{Mult}), which comprises information of $X$ and~$X^{(1)}$. Second, we study the linear reconstruction procedure for univariate functional data proposed by \cite{kneip2020optimal} which is specifically intended for the partial observation regime as in the underlying work. In fact, the reconstruction procedure of \cite{kneip2020optimal} is very similar to our method for the univariate case since both are based on the linear reconstruction problem~\eqref{eq:MinimizationProblem}. However, unlike our method, \cite{kneip2020optimal} use smoothing and thus require smooth functions. The procedure comes with different options. More specifically, the authors suggest to either estimate the function's observed and unobserved part at once using a linear reconstruction estimator, or perform smoothing on the observed part and use the reconstruction estimator solely for the missing part. In the latter case, some alignment is required to ensure continuous reconstructions. Moreover, the authors estimate scores $\xi_k^\smallO$ either by approximating integrals or using conditional expectations (CE). We restrict our attention to the \textsc{AYesCE} (with alignment and CE scores) option which has shown good performance in simulations of \cite{kneip2020optimal}.  Third, we examine the \textsc{Pace} method of \cite{yao2005functionalb} which in principle assumes sparse functional data and recovers functions from irregular observations inside the interval~$[0,1]$. As pointed out by \cite{kneip2020optimal}, \textsc{Pace} differs fundamentally from the considered reconstruction procedures, as it approximates the expansion $\sum_k \expv[\xi_k\vert X^\smallO] \phi_k(u)$ instead of the linear operator $\mathcal{L}(\mathcal{X}^\smallO)(u) = \sum_{k} \xi^\smallO_k \tilde{\phi}^\smallO_k(u)$. We note that a straightforward approximation of the Karhunen-Loève decomposition $X(u) = \sum_k \xi_k \phi_k(u)$ is not possible in the partial observation regime, as the ``global'' scores $\xi_k$ are not identified in this case. The fourth procedure in our simulation study is based on the function-on-function regression model $\expv[X(u)\vert X^{(1)}] = a(u) + \int_0^1 b(u,v) X^{(1)}(v)\, \dv$ of \cite{yao2005functionalb}. Here, $a$ and $b$ are estimated from the completely observable functions and $X_s$ is then reconstructed by $\widehat{X}_s = \widehat{a}(u) + \int_0^1 \widehat{b}(u,v) X_s^{(1)}(v) \, \dv$.

Reconstructions of two particular curves using our factor-based procedure are displayed in Figure~\ref{fig:simulationexamples}. The main features of the missing trajectories are reconstructed arguably well, hidden features are covered by the simultaneous prediction bands. Table~\ref{tab:simres} summarizes the results of the comparative simulation study. As it can be seen, the factor-based procedures generally perform better than the procedures involving smoothing. Our proposed multivariate reconstruction method \textsc{Mult} benefits from the additional covariate and outperforms its competitors in every scenario. The univariate version \textsc{Uni} (like \textsc{Pace} and \textsc{AYesCE}) is solely based on $X$ and performs second while \textsc{PACE} ranks third ahead of \textsc{AYesCE}. The benchmark model \textsc{Flm}, which ignores information of incomplete curves, exhibits the worst performance. Except for \textsc{Flm}, errors are generally higher in Setting B, where the observable sets are smaller. Factor-based procedures clearly profit by lower noise variance which is less apparent for the other approaches.

\begin{figure}
\centering
\includegraphics[width=\linewidth]{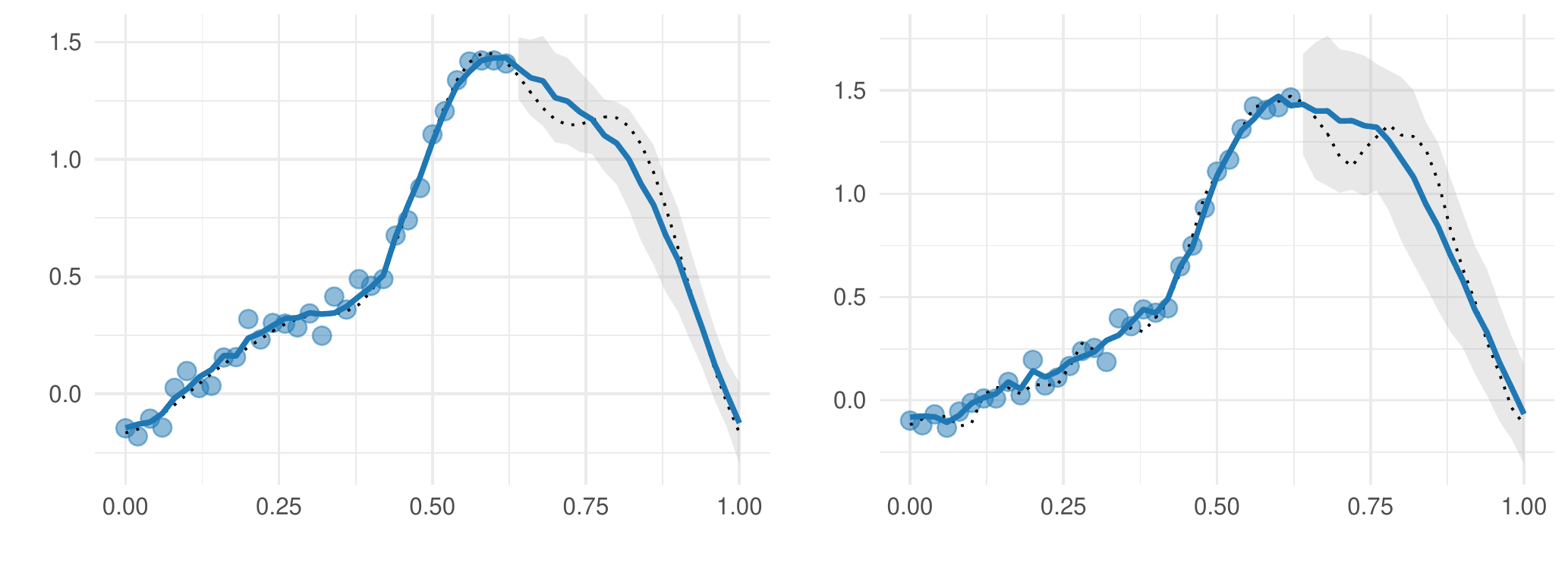}
\caption{Reconstructions of incomplete curves for $\lambda_k^{(0)} = e^{-k}$ (left) and  $\lambda_k^{(0)} = k^{-3}/2$ (right) in Setting A with $\sigma_e = 0.05$ and $T_\smallC = 100$ using additional covariate information (\textsc{Mult}). Solid lines refer to factor-based reconstructions, dotted lines denote true targets, and points noisy measurements. The shaded regions correspond to 95\% prediction bands (Section~\ref{sec:PredictionBands}).}
\label{fig:simulationexamples}
\end{figure}

\begin{table}
\caption{\text{MAE} and \text{SD} (in parentheses) of different reconstruction procedures. The factor-based procedures proposed in this work are labeled \textsc{Uni} (without covariate) and \textsc{Mult} (with covariate). \textsc{PACE} refers to the smoothing procedure of \cite{yao2005functional}, \textsc{AYesCE} to \cite{kneip2020optimal}, and \textsc{FLM} to the function-on-function regression model of \cite{yao2005functionalb}.}\label{tab:simres}
\centering
\begin{tabular}{|cccc|ccccc|}
\hline
 & $\lambda_k^{(0)}$ & $\sigma_e$ & $T_\smallC$ & \textsc{Uni} & \textsc{Mult} & \textsc{Pace} & \textsc{AYesCE} & \textsc{Flm} \\ \hline
A & $e^{-k}$           &  0.1 &  50 & 0.26 (0.02) & 0.23 (0.02) & 0.41 (0.05) & 0.44 (0.03) & 0.84 (0.05) \\
  &                    &      & 100 & 0.24 (0.02) & 0.21 (0.01) & 0.32 (0.04) & 0.40 (0.03) & 0.82 (0.05) \\
  &                    & 0.05 &  50 & 0.19 (0.01) & 0.16 (0.01) & 0.41 (0.07) & 0.42 (0.04) & 0.84 (0.06) \\
  &                    &      & 100 & 0.17 (0.01) & 0.14 (0.01) & 0.28 (0.05) & 0.39 (0.04) & 0.81 (0.05) \\
  & $\frac{k^{-3}}{2}$ & 0.1  &  50 & 0.33 (0.02) & 0.30 (0.01) & 0.46 (0.06) & 0.47 (0.04) & 0.64 (0.03) \\
  &                    &      & 100 & 0.30 (0.01) & 0.27 (0.01) & 0.39 (0.05) & 0.44 (0.03) & 0.63 (0.04) \\
  &                    & 0.05 &  50 & 0.29 (0.02) & 0.24 (0.01) & 0.44 (0.05) & 0.45 (0.04) & 0.63 (0.04) \\
  &                    &      & 100 & 0.27 (0.02) & 0.22 (0.01) & 0.37 (0.05) & 0.43 (0.03) & 0.63 (0.03) \\
\hline
B & $e^{-k}$           & 0.1  &  50 & 0.39 (0.03) & 0.33 (0.03) & 0.51 (0.05) & 0.54 (0.05) & 0.83 (0.05) \\
  &                    &      & 100 & 0.36 (0.03) & 0.30 (0.02) & 0.42 (0.04) & 0.51 (0.04) & 0.83 (0.05) \\  &                    & 0.05 &  50 & 0.32 (0.03) & 0.26 (0.03) & 0.51 (0.06) & 0.53 (0.06) & 0.83 (0.06) \\
  &                    &      & 100 & 0.30 (0.03) & 0.24 (0.02) & 0.38 (0.04) & 0.50 (0.04) & 0.83 (0.05) \\
  & $\frac{k^{-3}}{2}$ & 0.1  &  50 & 0.40 (0.02) & 0.36 (0.02) & 0.49 (0.04) & 0.54 (0.04) & 0.64 (0.04) \\
  &                    &      & 100 & 0.38 (0.02) & 0.33 (0.02) & 0.44 (0.04) & 0.51 (0.03) & 0.63 (0.03) \\
  &                    & 0.05 &  50 & 0.37 (0.03) & 0.31 (0.02) & 0.48 (0.05) & 0.53 (0.04) & 0.64 (0.04) \\
  &                    &      & 100 & 0.35 (0.02) & 0.29 (0.02) & 0.42 (0.05) & 0.50 (0.03) & 0.62 (0.04) \\
\hline
\end{tabular}
\end{table}

\subsection{Adequacy of prediction bands}\label{sec:AdequacyOfPredictionBands}

For the prediction bands~\eqref{eq:PredBands}, we consider a smoother $\widetilde{X}_t$ based on cubic splines. To analyze the performance of simultaneous prediction bands, we simulate training and test data as before and compute the proportion of bands covering the corresponding test targets in each simulation run. Table~\ref{tab:bands} reports the mean coverage probabilities along with standard deviations, where the quantities are taken over estimated coverages in $B = 100$ simulation runs. 

\begin{table}
\caption{Mean coverage probabilities and standard deviations (in parentheses).}\label{tab:bands}
\centering
\begin{tabular}{|cccc|ccc|}
\hline
\multirow{2}{*}{Setting} & \multirow{2}{*}{$\lambda_k^{(0)}$} & \multirow{2}{*}{$\sigma_e$} & \multirow{2}{*}{$T_\smallC$} & \multicolumn{3}{|c|}{nominal coverage} \\ & & & & \multicolumn{1}{|c}{75\%} & \multicolumn{1}{c}{90\%} & \multicolumn{1}{c|}{95\%} \\ \hline
A &          $e^{-k}$ & 0.1  &  50 & 0.78 (0.06) & 0.88 (0.05) & 0.93 (0.04) \\
  &                   &      & 100 & 0.79 (0.05) & 0.92 (0.03) & 0.95 (0.03) \\
  &                   & 0.05 &  50 & 0.74 (0.07) & 0.87 (0.05) & 0.92 (0.04) \\
  &                   &      & 100 & 0.79 (0.05) & 0.91 (0.03) & 0.95 (0.03) \\
  & $\frac{k^{-3}}{2}$&  0.1 & 50  & 0.70 (0.08) & 0.85 (0.06) & 0.91 (0.05) \\
  &                   &      & 100 & 0.72 (0.06) & 0.88 (0.04) & 0.94 (0.03) \\
  &                   & 0.05 & 50  & 0.67 (0.08) & 0.83 (0.06) & 0.89 (0.05) \\
  &                   &      & 100 & 0.71 (0.06) & 0.87 (0.05) & 0.93 (0.03) \\ \hline
B &          $e^{-k}$ &  0.1 & 50  & 0.74 (0.07) & 0.88 (0.05) & 0.92 (0.04) \\
  &                   &      & 100 & 0.79 (0.06) & 0.91 (0.04) & 0.95 (0.03) \\
  &                   & 0.05 &  50 & 0.74 (0.07) & 0.87 (0.05) & 0.93 (0.04) \\
  &                   &      & 100 & 0.77 (0.06) & 0.90 (0.04) & 0.95 (0.03) \\
  & $\frac{k^{-3}}{2}$&  0.1 & 50  & 0.69 (0.07) & 0.84 (0.06) & 0.90 (0.05) \\
  &                   &      & 100 & 0.72 (0.05) & 0.87 (0.04) & 0.93 (0.03) \\
  &                   & 0.05 & 50  & 0.68 (0.07) & 0.83 (0.06) & 0.88 (0.05) \\
  &                   &      & 100 & 0.72 (0.06) & 0.87 (0.04) & 0.93 (0.03) \\
\hline
\end{tabular}
\end{table}

\section{Real data illustration}\label{sec:RealDataApplication}

We return to the temperature data set from the introduction to further illustrate our methodology. The data are provided by \cite{data2023} and consist of $N = 48$ half-hourly temperature values which were recorded between July~1 and September~14, 2022 in the east (Petersgasse, 362~m above sea level, referred to as Station {\tt E}) and west (Plabutsch, 754~m, referred to as Station {\tt W}) of Graz~(Austria). See Figure~\ref{fig:sample} for a plot of daily temperature curves. The overall mean of 21.6~°C in the east is one degree higher than the western mean of~20.6~°C. A correlation of around 90$\%$ between half-hourly temperatures in Stations {\tt E} and {\tt W} suggests to consider measurements from Station {\tt W} as an additional covariate for the reconstruction problem. Out of $T = 76$ curves, only~$T_\smallC = 66$ are observed at all discretization points in Station {\tt E}. Using our proposed methodology, we reconstruct the remaining incompletely observed curves. The results for two particular days are shown in Figure~\ref{fig:weatherexamples}, the other days are presented in the supplementary file. As it can be seen, our factor-based approaches (with and without covariate) lead to fairly similar results for August~17. However, the situation is quite different for August~18. Here, the multivariate model appears to recover some additional features of the missing signal from covariate data in Station {\tt W}. The difference can be explained by a storm which hit the region on August~18 during the missing segment of Station {\tt E}. This has caused a sudden drop in the temperature around~16:00. Without covariate information, the model predicts a steady decline from the last available temperature at around~11:00. The cross validated number of factors in the multivariate model ($\widehat{r} =  15$ for August 17, $\widehat{r} = 24$ for August 18) further stresses the relevance of a high rank in real data examples.

\section{Acknowledgments}

This research was partly funded by the Austrian Science Fund (FWF) [P 35520].

\begin{figure}
\centering
\includegraphics[width=\linewidth]{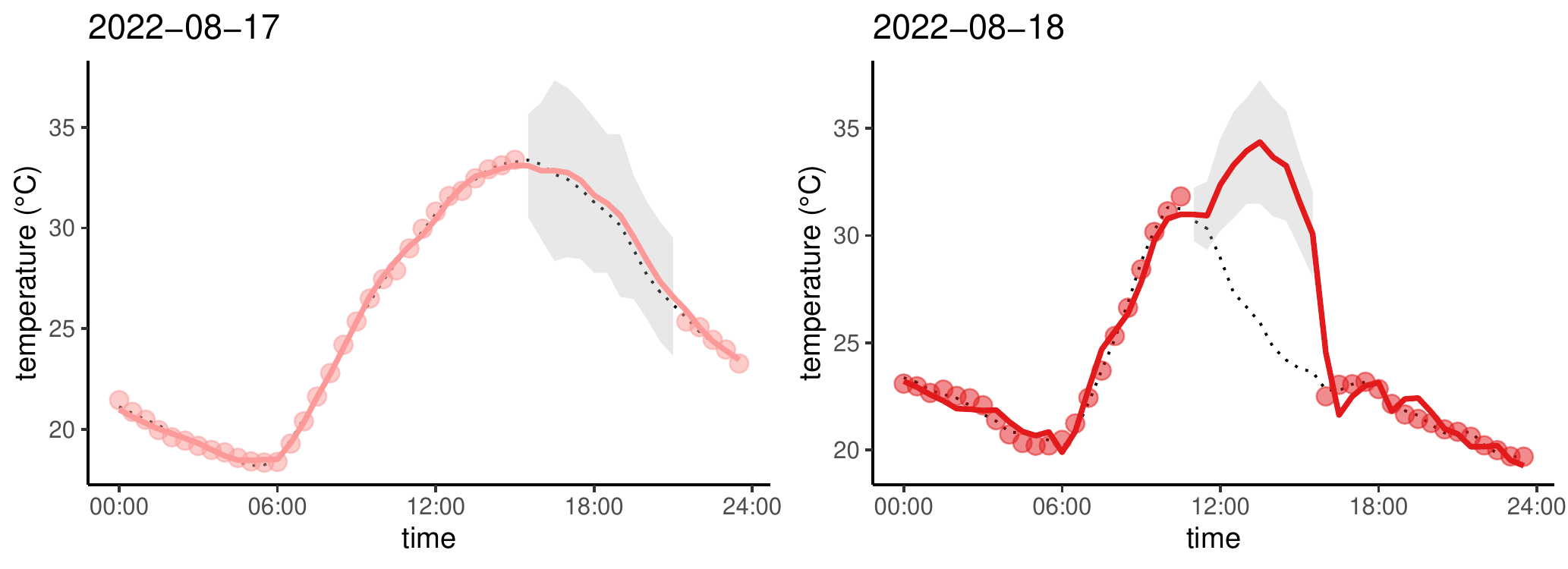}
\caption{Reconstructions of incompletely observed temperature curves recorded in the east of Graz (Austria) without covariate (dotted line) and western measurements as additional covariate (solid line). Points refer to noisy measurements; 95\% prediction bands were obtained from the procedure presented in Section~\ref{sec:PredictionBands} using additional covariate information and cubic splines for smoothing complete curves.}
\label{fig:weatherexamples}
\end{figure}

\bibliographystyle{apalike}
\bibliography{ref}

\newpage

\appendix

\begin{center}
  \LARGE Supplementary material for: \medskip
  
  Covariate-informed reconstruction of partially observed functional data via factor models
\end{center}
\vskip 1em
\begin{center}
  \large \lineskip .75em%
  \begin{tabular}[t]{c}
    Maximilian Ofner and Siegfried H\"ormann
  \end{tabular}\par
\end{center}
\begin{center}
  Graz University of Technology
\end{center}

\section{Main proofs}

In the following, we heavily rely on arguments from  \cite{bai2021approximate}. Regarding norms, let $\norm{\cdot}_2$ and $\normF{\cdot}$ denote the spectral and Frobenius norm of a matrix, respectively, and $\norm{\cdot}$ the euclidean norm of a vector. Recall that $\normS{\mathbbm{A}\mathbbm{x}} \leq \normS{\mathbbm{A}}\norm{\mathbbm{x}}, \normF{\mathbbm{A}\mathbbm{x}} \leq \normF{\mathbbm{A}}\norm{\mathbbm{x}},\normS{\mathbbm{A}\mathbbm{B}}\leq \normS{\mathbbm{A}}\normS{\mathbbm{B}},\normF{\mathbbm{A}\mathbbm{B}}\leq \normF{\mathbbm{A}}\normF{\mathbbm{B}}$ and $\normS{\mathbbm{A}} \leq \normF{\mathbbm{A}}$, for any matrix $\mathbbm{A}$, $\mathbbm{B}$ and vector $\mathbbm{x}$.

Additionally, we introduce the following notation. Let $$\mathbbm{f}_{\smallO,t} = (\xi^\smallO_{t1}/\sqrt{\lambda^\smallO_1}, \dots, \xi^\smallO_{tr}/\sqrt{\lambda^\smallO_r}),$$ be the $t$-th row of $\mathbbm{F}_\smallO$ and $$\bblambda^*_{\smallO,i} = (\sqrt{\lambda^\smallO_1}\tilde{\varphi}^\smallO_1(u_i), \dots, \sqrt{\lambda^\smallO_r}\tilde{\varphi}^\smallO_r(u_i)).$$ Consider the estimators $\widehat{\mathbbm{f}}_{\smallO,s} = \frac{1}{\sqrt{N}} \mathbbm{y}_{\smallO,s} \mathbbm{V}_\smallO \mathbbm{D}_\smallO^{-1} = \frac{1}{N} \mathbbm{y}_{\smallO,s} \widehat{\bbLambda}_\smallO \mathbbm{D}_\smallO^{-2}$ and $\widehat{\bblambda}^*_{\smallO,i} = \frac{1}{T_\smallC}\mathbbm{y}_{\smallC,i}' \widehat{\mathbbm{F}}_\smallO$, where $\widehat{\bbLambda}_\smallO = \sqrt{N} \mathbbm{V}_\smallO \mathbbm{D}_\smallO$ is an estimator for $\bbLambda_\smallO$.
Rewrite $\mathcal{L}(\mathcal{X}_s^\smallO)(u_i)$ in \eqref{eq:Operator} as
\begin{equation}\label{eq:OperatorMatrixNotation}
\mathcal{L}(\mathcal{X}_s^\smallO)(u_i) = \sum_{k=1}^{r} \xi^\smallO_{sk}\tilde{\varphi}^\smallO_k(u_i) = \bblambda^{*}_{\smallO,i} \mathbbm{f}_{\smallO,s}'.
\end{equation}

\subsection{Proof of Theorem 1}

To prove Theorem 1, we need to recover $\mathbbm{f}_{\smallO,s}$ and $\bblambda_{\smallO,i}^*$ from~$\mathbbm{Y}$ which is only possible up to rotation. Let us define
\begin{itemize}
\item $\mathbbm{G}_{\smallO} = \bbLambda_\smallO'\widehat{\bbLambda}_\smallO (\widehat{\bbLambda}_\smallO'\widehat{\bbLambda}_\smallO)^{-1}  = \frac{\bbLambda_\smallO'\widehat{\bbLambda}_\smallO}{N} \mathbbm{D}_\smallO^{-2}$,
\item$\mathbbm{H}_{\smallO} = (\widehat{\mathbbm{F}}_\smallO'\mathbbm{F}_\smallO)^{-1}\widehat{\mathbbm{F}}_\smallO' \widehat{\mathbbm{F}}_\smallO = (\frac{\widehat{\mathbbm{F}}_\smallO'\mathbbm{F}_\smallO}{T_\smallC})^{-1}$;
\end{itemize}
the involved quantities have already been introduced before. As can be seen, $\mathbbm{G}_{\smallO}$ results from the regression of columns of $\bbLambda_\smallO$ onto columns of $\widehat{\bbLambda}_\smallO$. In the same vein, $\mathbbm{H}_{\smallO}^{-1}$ resembles a regression of $\mathbbm{F}_\smallO$ onto $\widehat{\mathbbm{F}}_\smallO$. Appendix~\ref{sec:Equivalence} shows that the matrices $\mathbbm{G}_\smallO$ and $\mathbbm{H}_\smallO$ are asymptotically equivalent in the sense that $\normS{\mathbbm{G}_{\smallO}-\mathbbm{H}_{\smallO}} =  o_p(1)$. See also Section 3.2~in \citeAppendix{bai2021approximate}  for a discussion on equivalence of rotation matrices. The proofs of the following lemmas are given in Appendix~\ref{sec:LemmaRateF} and \ref{sec:LemmaRateL}, respectively.

\begin{lemma}\label{lemma:RateF}
Under the assumptions of Theorem~1, for every $s \leq T$,
\begin{equation*}
\norm{\widehat{\mathbbm{f}}_{\smallO,s}-\mathbbm{f}_{\smallO,s}\mathbbm{G}_{\smallO}} = O_p\left(\frac{1}{\lambda^\smallO_{r}}\sqrt{\frac{1}{N}+\frac{1}{T}}\right).
\end{equation*}
\end{lemma}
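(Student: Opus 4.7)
The starting point is an algebraic identity. Substituting $\mathbbm{y}_{\smallO,s} = \mathbbm{f}_{\smallO,s}\bbLambda_\smallO' + \mathbbm{e}_{\smallO,s}$ into the definition of $\widehat{\mathbbm{f}}_{\smallO,s}$ and recognizing $\mathbbm{G}_\smallO = \bbLambda_\smallO'\widehat{\bbLambda}_\smallO\mathbbm{D}_\smallO^{-2}/N$ yields
\begin{equation*}
\widehat{\mathbbm{f}}_{\smallO,s} - \mathbbm{f}_{\smallO,s}\mathbbm{G}_\smallO = \tfrac{1}{N}\, \mathbbm{e}_{\smallO,s}\, \widehat{\bbLambda}_\smallO\, \mathbbm{D}_\smallO^{-2}.
\end{equation*}
The SVD normalization $\widehat{\mathbbm{F}}_\smallO'\widehat{\mathbbm{F}}_\smallO/T_\smallC = \mathbbm{I}_r$ additionally gives $\widehat{\bbLambda}_\smallO = \mathbbm{Y}_\smallO'\widehat{\mathbbm{F}}_\smallO/T_\smallC = \bbLambda_\smallO(\mathbbm{H}_\smallO^{-1})' + \mathbbm{E}_\smallO'\widehat{\mathbbm{F}}_\smallO/T_\smallC$, which splits the error as $\mathbbm{I} + \mathbbm{II}$ with
\begin{equation*}
\mathbbm{I} = \tfrac{1}{N}\mathbbm{e}_{\smallO,s}\bbLambda_\smallO(\mathbbm{H}_\smallO^{-1})'\mathbbm{D}_\smallO^{-2}, \qquad \mathbbm{II} = \tfrac{1}{NT_\smallC}\mathbbm{e}_{\smallO,s}\mathbbm{E}_\smallO'\widehat{\mathbbm{F}}_\smallO\mathbbm{D}_\smallO^{-2}.
\end{equation*}

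The rest of the argument consists of bounding the operator norms of the constituent matrices. The three auxiliary estimates I would collect are: (a) $\normS{\mathbbm{D}_\smallO^{-2}} = O_p(1/\lambda_r^\smallO)$, obtained from a Weyl-type perturbation argument applied to $\frac{1}{NT_\smallC}\mathbbm{Y}_\smallO\mathbbm{Y}_\smallO'$, where the signal eigenvalues of order $N\lambda_k^\smallO$ dominate the noise spectrum controlled by Assumption~\ref{ass:Errors}(a); (b) $\normS{(\mathbbm{H}_\smallO^{-1})'} = O_p(1)$, following from $\mathbbm{H}_\smallO^{-1}(\mathbbm{H}_\smallO^{-1})' = \widehat{\mathbbm{F}}_\smallO'(\mathbbm{F}_\smallO\mathbbm{F}_\smallO'/T_\smallC^2)\widehat{\mathbbm{F}}_\smallO$ together with $\widehat{\mathbbm{F}}_\smallO'\widehat{\mathbbm{F}}_\smallO/T_\smallC = \mathbbm{I}_r$ and the fact that $\expv[\mathbbm{F}_\smallO'\mathbbm{F}_\smallO/T_\smallC] = \mathbbm{I}_r$ with concentration provided by the mixing Assumption~\ref{ass:Mixing}; and (c) $\normS{\widehat{\mathbbm{F}}_\smallO} = \sqrt{T_\smallC}$, again from the normalization.

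For the first term, submultiplicativity gives $\norm{\mathbbm{I}} \leq \frac{1}{N}\norm{\mathbbm{e}_{\smallO,s}\bbLambda_\smallO}\normS{(\mathbbm{H}_\smallO^{-1})'}\normS{\mathbbm{D}_\smallO^{-2}}$. Expanding the squared norm of $\mathbbm{e}_{\smallO,s}\bbLambda_\smallO$ column-wise, Assumption~\ref{ass:Errors}(b) and boundedness of eigenfunctions (Assumption~\ref{ass:Eigenfunctions}) yield $\expv\norm{\mathbbm{e}_{\smallO,s}\bbLambda_\smallO}^2 \leq C\, N \sum_{k\leq r}\lambda_k^\smallO = O(N)$ thanks to summability of the eigenvalues, so $\norm{\mathbbm{I}} = O_p(1/(\lambda_r^\smallO\sqrt{N}))$. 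For the second term, I would use $\norm{\mathbbm{e}_{\smallO,s}\mathbbm{E}_\smallO'} \leq \norm{\mathbbm{e}_{\smallO,s}}\normS{\mathbbm{E}_\smallO'}$, with $\norm{\mathbbm{e}_{\smallO,s}} = O_p(\sqrt{N})$ by Assumption~\ref{ass:Errors}(c) and $\normS{\mathbbm{E}_\smallO'} = O_p(\sqrt{\max\{N,T\}})$ by Assumption~\ref{ass:Errors}(a); combining with (a) and (c) and using $T_\smallC \asymp T$ gives $\norm{\mathbbm{II}} = O_p((\lambda_r^\smallO)^{-1}\sqrt{1/N + 1/T})$. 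Since this rate dominates the rate of $\norm{\mathbbm{I}}$, the triangle inequality closes the proof.

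The main obstacle is the auxiliary bound (a). Because $\lambda_r^\smallO \to 0$ as $r$ grows, one cannot treat $\mathbbm{D}_\smallO^{-2}$ as bounded and must carefully quantify how the finite-sample $r$-th squared singular value of $\mathbbm{Y}_\smallO/\sqrt{NT_\smallC}$ stays close to $\lambda_r^\smallO$. This requires the joint growth condition $r/\lambda_r^\smallO = o(\sqrt{\min\{N,T/\log N\}})$ from Assumption~\ref{ass:Asymptotics} to ensure the spectral gap is preserved after perturbation by the noise block. A secondary difficulty is uniform-in-$r$ concentration of $\mathbbm{F}_\smallO'\mathbbm{F}_\smallO/T_\smallC$ around $\mathbbm{I}_r$ needed for (b); this is where the mixing Assumption~\ref{ass:Mixing} and the tail condition Assumption~\ref{ass:Tail}(a) enter via a Bernstein inequality.
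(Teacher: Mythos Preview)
Your proposal is correct and shares the paper's algebraic starting point: both arrive at $\widehat{\mathbbm{f}}_{\smallO,s}-\mathbbm{f}_{\smallO,s}\mathbbm{G}_\smallO=\tfrac{1}{N}\mathbbm{e}_{\smallO,s}\widehat{\bbLambda}_\smallO\mathbbm{D}_\smallO^{-2}$ and then expand $\widehat{\bbLambda}_\smallO=\mathbbm{Y}_\smallO'\widehat{\mathbbm{F}}_\smallO/T_\smallC$ into a signal part and a noise part. The bound on $\mathbbm{D}_\smallO^{-2}$ and the treatment of the noise--noise term $\mathbbm{II}$ are essentially identical to the paper's (its Lemma~\ref{lemma:MatrixD}\textit{(iii)} and the second half of Lemma~\ref{lemma:Errors}\textit{(iii)}).

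The one tactical difference is in the signal term $\mathbbm{I}$. You factor $\widehat{\bbLambda}_\smallO$ as $\bbLambda_\smallO(\mathbbm{H}_\smallO^{-1})'+\mathbbm{E}_\smallO'\widehat{\mathbbm{F}}_\smallO/T_\smallC$ and therefore need the auxiliary estimate $\normS{\mathbbm{H}_\smallO^{-1}}=O_p(1)$. The paper instead keeps $\bbLambda_\smallO\mathbbm{F}_\smallO'=\mathbbm{X}_\smallO'$ together and bounds $\norm{\mathbbm{e}_{\smallO,s}\mathbbm{X}_\smallO'}$ directly via independence of errors and signal (the bound~\eqref{eq:Xet} inside Lemma~\ref{lemma:Errors}), then multiplies by $\normS{\widehat{\mathbbm{F}}_\smallO}/T_\smallC$. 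This is more economical: the paper never needs anything sharper than the crude $\normS{\mathbbm{H}_\smallO^{-1}}\le \normS{\widehat{\mathbbm{F}}_\smallO}\normS{\mathbbm{F}_\smallO}/T_\smallC=O_p(\sqrt{r})$, whereas your route would fail with that crude bound (term~$\mathbbm{I}$ would pick up an extra $\sqrt{r}$). Your claim (b) that $\normS{\mathbbm{F}_\smallO'\mathbbm{F}_\smallO/T_\smallC}=O_p(1)$ is plausible under Assumptions~\ref{ass:Mixing} and~\ref{ass:Tail}(a), but it is a genuine random-matrix concentration statement for growing $r$ with mixing rows and merely uncorrelated (not independent) columns, and the paper neither proves nor uses it. If you want to avoid that extra lemma, simply regroup $\bbLambda_\smallO(\mathbbm{H}_\smallO^{-1})'=\mathbbm{X}_\smallO'\widehat{\mathbbm{F}}_\smallO/T_\smallC$ and bound $\norm{\mathbbm{e}_{\smallO,s}\mathbbm{X}_\smallO'}$ as the paper does.
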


\begin{lemma}\label{lemma:RateLambda}
Under the assumptions of Theorem~1,
\begin{equation*}
\max_{i\leq N} \, \norm{\widehat{\bblambda}^*_{\smallO,i} - \bblambda^*_{\smallO,i}(\mathbbm{H}_{\smallO}')^{-1}} = O_p\left(
\sqrt{\frac{r}{\lambda^\smallO_{r}}\left(\frac{1}{N} + \frac{\log(N)}{T}\right)}\right).
\end{equation*}
\end{lemma}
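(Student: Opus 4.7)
The plan is to use the linear representation $\widehat{\bblambda}^*_{\smallO,i} = T_\smallC^{-1} \mathbbm{y}_{\smallC,i}' \widehat{\mathbbm{F}}_\smallO$ together with a factor-plus-reconstruction-error representation of $\mathbbm{y}_{\smallC,i}$. For $i \leq N$, writing $y^{(0)}_{t,i} = X_t(u_i) + e^{(0)}_{t,i}$ and applying $X_t = \mathcal{L}(\mathcal{X}_t^\smallO) + Z_t$ with \eqref{eq:OperatorMatrixNotation} gives the column identity $\mathbbm{y}_{\smallC,i} = \mathbbm{F}_\smallO (\bblambda^*_{\smallO,i})' + \mathbbm{z}_i + \mathbbm{e}^{(0)}_i$, where $\mathbbm{z}_i$ and $\mathbbm{e}^{(0)}_i$ stack $Z_t(u_i)$ and $e^{(0)}_{t,i}$ over $t \in \mathcal{T}$ (with $Z_t(u_i)=0$ when $u_i \in \bigO$). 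Since $(\mathbbm{H}_\smallO')^{-1} = T_\smallC^{-1} \mathbbm{F}_\smallO' \widehat{\mathbbm{F}}_\smallO$ by definition of $\mathbbm{H}_\smallO$, this immediately yields
\begin{equation*}
\widehat{\bblambda}^*_{\smallO,i} - \bblambda^*_{\smallO,i}(\mathbbm{H}_\smallO')^{-1} = \frac{1}{T_\smallC}\mathbbm{z}_i'\widehat{\mathbbm{F}}_\smallO + \frac{1}{T_\smallC}(\mathbbm{e}^{(0)}_i)'\widehat{\mathbbm{F}}_\smallO.
\end{equation*}
Each of the two terms on the right-hand side is further split according to $\widehat{\mathbbm{F}}_\smallO = \mathbbm{F}_\smallO\mathbbm{G}_\smallO + (\widehat{\mathbbm{F}}_\smallO - \mathbbm{F}_\smallO\mathbbm{G}_\smallO)$.

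The ``oracle'' contributions $T_\smallC^{-1}\mathbbm{z}_i'\mathbbm{F}_\smallO\mathbbm{G}_\smallO$ and $T_\smallC^{-1}(\mathbbm{e}^{(0)}_i)'\mathbbm{F}_\smallO\mathbbm{G}_\smallO$ reduce coordinate-wise, after post-multiplication by $\mathbbm{G}_\smallO$, to the sample averages $T_\smallC^{-1}\sum_t Z_t(u_i)\,\xi^\smallO_{tk}/\sqrt{\lambda^\smallO_k}$ and $T_\smallC^{-1}\sum_t e^{(0)}_{t,i}\,\xi^\smallO_{tk}/\sqrt{\lambda^\smallO_k}$. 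Both are mean-zero---the first by Proposition~\ref{prop}(i) (orthogonality of $\mathcal{X}^\smallO$ and $Z$), the second by Assumption~\ref{ass:Errors} (independence of errors and factor scores)---and, under Assumptions~\ref{ass:MomentsX} and~\ref{ass:Tail}, have sub-exponential summands of bounded variance. The mixing condition~\ref{ass:Mixing} allows applying a Bernstein inequality for $\alpha$-mixing sequences, and a union bound over $i=1,\dots,N$ and the $r$ coordinates delivers a coordinate-wise uniform rate of $O_p(\sqrt{\log(N)/T})$. Combined with $\normS{\mathbbm{G}_\smallO}=O_p(1)$ (from the asymptotic equivalence with $\mathbbm{H}_\smallO$ established in Appendix~\ref{sec:Equivalence}), the Euclidean-norm contribution from this branch is $O_p(\sqrt{r\log(N)/T})$, which is absorbed into the $\sqrt{r\log(N)/(\lambda^\smallO_r T)}$ term of the target rate.

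For the residual contributions $T_\smallC^{-1}\mathbbm{z}_i'(\widehat{\mathbbm{F}}_\smallO - \mathbbm{F}_\smallO\mathbbm{G}_\smallO)$ and its error analogue, I would apply Cauchy--Schwarz to bound them by $T_\smallC^{-1}\norm{\mathbbm{z}_i}\,\normF{\widehat{\mathbbm{F}}_\smallO - \mathbbm{F}_\smallO\mathbbm{G}_\smallO}$. Uniform control of $\max_i \norm{\mathbbm{z}_i}/\sqrt{T_\smallC}$ and $\max_i \norm{\mathbbm{e}^{(0)}_i}/\sqrt{T_\smallC}$ follows from Assumptions~\ref{ass:MomentsZ} and~\ref{ass:Tail}(c); the Frobenius term is controlled by an aggregated version of Lemma~\ref{lemma:RateF} obtained by taking expectations of the squared pointwise bound and applying Markov's inequality. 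The main obstacle will be matching the stated rate precisely on this branch: a naive chain through the pointwise rate of Lemma~\ref{lemma:RateF} gives $(1/\lambda^\smallO_r)\sqrt{1/N+1/T}$, which carries an excess factor relative to the target unless $r\lambda^\smallO_r \gtrsim 1$. Closing the gap to $\sqrt{r/(\lambda^\smallO_r N)}$ will likely require a sharper $L^2$-averaged version of Lemma~\ref{lemma:RateF} (rather than squaring its pointwise rate), together with careful use of the spectral-norm bound on $\mathbbm{E}_\smallO$ afforded by Assumption~\ref{ass:Errors}(a).
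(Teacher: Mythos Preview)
Your decomposition and overall strategy coincide with the paper's: write $\mathbbm{y}_{\smallC,i} = \mathbbm{F}_\smallO(\bblambda^*_{\smallO,i})' + \mathbbm{z}_{\smallC,i} + \mathbbm{e}_{\smallC,i}$, split $\widehat{\mathbbm{F}}_\smallO = \mathbbm{F}_\smallO\mathbbm{G}_\smallO + (\widehat{\mathbbm{F}}_\smallO - \mathbbm{F}_\smallO\mathbbm{G}_\smallO)$, handle the oracle part by Bernstein plus a union bound, and the residual part by Cauchy--Schwarz. Two points need correction.

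First, your claim $\normS{\mathbbm{G}_\smallO}=O_p(1)$ is not right in this growing-rank, weak-factor setting. The paper's Lemma~\ref{lemma:H}(i) gives only $\normS{\mathbbm{G}_\smallO}=O_p(1/\sqrt{\lambda^\smallO_r})$, and this is precisely where the $1/\lambda^\smallO_r$ factor in the target rate enters on the oracle branch: $\max_i\norm{(\mathbbm{z}_{\smallC,i}+\mathbbm{e}_{\smallC,i})'\mathbbm{F}_\smallO/T_\smallC}\cdot\normS{\mathbbm{G}_\smallO}=O_p(\sqrt{r\log(N)/T})\cdot O_p(1/\sqrt{\lambda^\smallO_r})$. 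Your conclusion that the term is absorbed into the target happens to survive, but the intermediate bound is incorrect and the $1/\lambda^\smallO_r$ does not come for free.

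Second, the gap you flag on the residual branch is real, and the fix is exactly what you guess: a direct \emph{spectral-norm} bound on $\widehat{\mathbbm{F}}_\smallO-\mathbbm{F}_\smallO\mathbbm{G}_\smallO$, not an aggregation of the row-wise Lemma~\ref{lemma:RateF}. The paper proves (Lemma~\ref{lemma:F}) that
\[
\normS{\widehat{\mathbbm{F}}_\smallO-\mathbbm{F}_\smallO\mathbbm{G}_\smallO}=O_p\Bigl(\sqrt{\tfrac{1}{\lambda^\smallO_r}\bigl(\tfrac{rT}{N}+1\bigr)}\Bigr),
\]
by writing $\widehat{\mathbbm{F}}_\smallO-\mathbbm{F}_\smallO\mathbbm{G}_\smallO$ as $(NT_\smallC)^{-1}\mathbbm{E}_\smallO\bbLambda_\smallO\mathbbm{F}_\smallO'\widehat{\mathbbm{F}}_\smallO\mathbbm{D}_\smallO^{-2}+(NT_\smallC)^{-1}\mathbbm{E}_\smallO\mathbbm{E}_\smallO'\widehat{\mathbbm{F}}_\smallO\mathbbm{D}_\smallO^{-2}$ and bounding each piece in operator norm, using Assumption~\ref{ass:Errors}(a) for $\normS{\mathbbm{E}_\smallO\mathbbm{E}_\smallO'}$ and Lemma~\ref{lemma:MatrixD} for $\normS{\mathbbm{B}_\smallO^2\mathbbm{D}_\smallO^{-2}}=O_p(1)$. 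Dividing by $\sqrt{T_\smallC}$ then gives exactly $O_p(\sqrt{(r/N+1/T)/\lambda^\smallO_r})$. Squaring the pointwise rate of Lemma~\ref{lemma:RateF} and summing, as you note, loses a factor $1/\sqrt{r\lambda^\smallO_r}$ and cannot recover this.
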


\begin{proof}[Proof of Theorem 1]
We deduce from decomposition \eqref{eq:OperatorMatrixNotation} of $\mathcal{L}(\mathcal{X}_s^\smallO)$,
\begin{align*}
&\widehat{\mathcal{L}(\mathcal{X}_s^\smallO)}(u_i)-\mathcal{L}(\mathcal{X}_s^\smallO)(u_i) = \widehat{\bblambda}^*_{\smallO,i}\widehat{\mathbbm{f}}_{\smallO,s}'  - \bblambda^*_{\smallO,i}\mathbbm{f}_{\smallO,s}'\\
&\qquad = (\widehat{\bblambda}^*_{\smallO,i}-\bblambda^*_{\smallO,i} (\mathbbm{H}_{\smallO}')^{-1})
(\mathbbm{f}_{\smallO,s}\mathbbm{H}_{\smallO})'+\widehat{\bblambda}^*_{\smallO,i} (\widehat{\mathbbm{f}}_{\smallO,s}-\mathbbm{f}_{\smallO,s}\mathbbm{G}_{\smallO})' + \widehat{\bblambda}^*_{\smallO,i}(\mathbbm{G}_{\smallO}-\mathbbm{H}_{\smallO})'\mathbbm{f}_{\smallO,s}'.
\end{align*}
Taking norms, we observe
\begin{equation*}
\max_{i\leq N} \, \abs{\widehat{\mathcal{L}(\mathcal{X}_s^\smallO)}(u_i)-\mathcal{L}(\mathcal{X}_s^\smallO)(u_i)} \leq a + b + c,
\end{equation*}
where
\begin{align*}
&a = \max_{i\leq N} \,\norm{\widehat{\bblambda}^*_{\smallO,i}-\bblambda^*_{\smallO,i} (\mathbbm{H}_{\smallO}')^{-1}}\norm{\mathbbm{f}_{\smallO,s}}\normS{\mathbbm{H}_{\smallO}} && = O_p\left(\frac{{r}}{\lambda^\smallO_{r}}
\sqrt{\frac{1}{N} + \frac{\log(N)}{T}}\right),\\
& b = \max_{i\leq N}\,\norm{\widehat{\bblambda}^*_{\smallO,i}} \norm{\widehat{\mathbbm{f}}_{\smallO,s}-\mathbbm{f}_{\smallO,s}\mathbbm{G}_{\smallO}} && = O_p\left(\frac{1}{\lambda^\smallO_{r}}\sqrt{\frac{1}{N}+\frac{1}{T}}\right),\\
&c = \max_{i \leq N}\, \norm{\widehat{\bblambda}^*_{\smallO,i}}\normS{\mathbbm{G}_{\smallO}-\mathbbm{H}_{\smallO}}\norm{\mathbbm{f}_{\smallO,s}} && = O_p\left(\frac{\sqrt{{r}}}{\lambda^\smallO_{r}}\sqrt{\frac{{r}}{N} + \frac{1}{T}}\right).
\end{align*}
The rates follow from Lemma~\ref{lemma:RateF}, Lemma~\ref{lemma:RateLambda}, Lemma~\ref{lemma:H}\textit{(ii)}, Lemma~\ref{lemma:Rotation} and Lemma~\ref{lemma:Loadings}\textit{(ii)}, proven in the Appendix, as well as the fact $\norm{\mathbbm{f}_{\smallO,s}} = O_p(\sqrt{{r}})$. We see that the first term dominates the others. As a next step, we treat the discretization error. To this end, observe that $\expv[(X(u)-X(v))^2] \leq C \abs{u-v}$ by Assumption~\ref{ass:Lipschitz}. Thus,
\begin{equation}\label{eq:HoelderL}
\begin{aligned}
\abs{\mathcal{L}(\mathcal{X}_s^\smallO)(u)-\mathcal{L}(\mathcal{X}_s^\smallO)(v)} &\leq \sum_{k=1}^{r} \abs{\xi^\smallO_{sk}}\frac{\expv{[\abs{X(u)-X(v)}\abs{\xi^\smallO_k}}]}{\lambda^\smallO_k} \\
&\leq \sum_{k=1}^{r} \frac{\abs{\xi^\smallO_{sk}}}{\sqrt{\lambda^\smallO_k}}\expv{[(X(u)-X(v))^2]}^{1/2} \\
&\leq C \sum_{k=1}^{r} \frac{\abs{\xi^\smallO_{sk}}}{\sqrt{\lambda^\smallO_k}}\sqrt{\abs{u - v}} = O_p\left({r} \sqrt{\abs{u - v}}\right).
\end{aligned}
\end{equation}
Now, define $\mathcal{L}^*: \mathcal{H} \to L^2([0,1])$ to be the operator which results form $\mathcal{L}$ by linear interpolation, that is, for $f \in \mathcal{H}$ and $u \in [0,1]$,
\begin{equation}\label{eq:DiscOperator}
\begin{aligned}
\mathcal{L}^*(f)(u) &= \sum_{i = 1}^{N-1} \left(\mathcal{L}(f)(u_i)  + \frac{u - u_i}{u_{i+1}-u_i}(\mathcal{L}(f)(u_{i+1}) -\mathcal{L}(f)(u_i))   \right)\\
&\qquad \times \mathbbm{1} \{u \in [u_i,u_{i+1})\}.
\end{aligned}
\end{equation}
We get from~\eqref{eq:HoelderL},~\eqref{eq:DiscOperator}, and our assumption on the equidistant spacing of grid points,
\begin{equation*}
\begin{aligned}
&\sup_{u \in [0,1]} \, \abs{\mathcal{L}^*(\mathcal{X}_s^\smallO)(u)-\mathcal{L}(\mathcal{X}_s^\smallO)(u)} = \max_{i\leq N} \, \sup_{u \in [u_{i}, u_{i+1})}\, \abs{\mathcal{L}^*(\mathcal{X}_s^\smallO)(u)-\mathcal{L}(\mathcal{X}_s^\smallO)(u)} \\
&\quad \leq \max_{i\leq N} \, \left(\sup_{u \in [u_{i}, u_{i+1})}\, \abs{\mathcal{L}(\mathcal{X}_s^\smallO)(u_i)-\mathcal{L}(\mathcal{X}_s^\smallO)(u)} + \abs{\mathcal{L}(\mathcal{X}_s^\smallO)(u_{i+1})-\mathcal{L}(\mathcal{X}_s^\smallO)(u_i)}\right) \\
&\quad = O_p\left({r} \max_{i \leq N}\,  \sqrt{\abs{u_{i+1}-u_i}}\right)  = O_p\left(\frac{{r}}{\sqrt{N}}\right).
\end{aligned}
\end{equation*}
Combining the above, we conclude
\begin{align*}
&\sup_{u \in [0,1]} \, \abs{\widehat{\mathcal{L}(\mathcal{X}_s^\smallO)}(u)-\mathcal{L}(\mathcal{X}_s^\smallO)(u)} \\
&\qquad \leq \max_{i \leq N} \, \abs{\widehat{\mathcal{L}(\mathcal{X}_s^\smallO)}(u_i)-\mathcal{L}^*(\mathcal{X}_s^\smallO)(u_i)} + 
\sup_{u \in [0,1]} \, \abs{\mathcal{L}^*(\mathcal{X}_s^\smallO)(u)-\mathcal{L}(\mathcal{X}_s^\smallO)(u)}\\
&\qquad = O_p\left(\frac{{r}}{\lambda^\smallO_{r}}\sqrt{\frac{1}{N} + \frac{\log(N)}{T}}\right) + O_p\left(\frac{{r}}{\sqrt{N}}\right).
\end{align*}
The second term is dominated by the first.
\end{proof}

\subsection{Proof of Theorem~\ref{thm:PredictionBands}}

\begin{proof}
The definition $Z_t(u) = X_t(u) - \mathcal{L}(\mathcal{X}_t^\smallO)(u)$ yields

\begin{align*}
&\prob\left(\sup_{u \in \bigM}\, \frac{ \abs{X_t(u)-\widehat{\mathcal{L}(\mathcal{X}_t^\smallO)}(u)}}{\widehat{\omega}(u)} > q_\alpha \right)  \\
&\quad \leq \prob\left\lbrace \left(\sup_{u \in \bigM}\, \frac{ \abs{\mathcal{L}(\mathcal{X}_t^\smallO)(u)-\widehat{\mathcal{L}(\mathcal{X}_t^\smallO)}(u)}}{\omega(u)}+\sup_{u \in \bigM}\, \frac{ \abs{Z_t(u)}}{\omega(u)}\right)\sup_{u\in \bigM} \frac{\omega(u)}{\widehat{\omega}(u)} > q_\alpha\right\rbrace.
\end{align*}
Next, let us define
\begin{equation*}
A = \sup_{u \in \bigM}\, \frac{ \abs{\mathcal{L}(\mathcal{X}_t^\smallO)(u)-\widehat{\mathcal{L}(\mathcal{X}_t^\smallO)}(u)}}{\omega(u)},\quad B = \sup_{u \in \bigM}\, \frac{ \abs{Z_t(u)}}{\omega(u)}, \quad C = \sup_{u\in \bigM} \frac{\omega(u)}{\widehat{\omega}(u)}. 
\end{equation*}
For $\epsilon > 0$, we get
\begin{equation*}
\prob\left\lbrace (A + B)C > (\epsilon + q_\alpha)(1 + \epsilon)\right\rbrace \leq \prob(A > \epsilon) + \prob(B > q_\alpha) + \prob(C > 1+\epsilon).
\end{equation*}
By Theorem~\ref{thm:Convergence} and boundedness of $\omega(u)$, it holds $\prob(A > \epsilon) \to 0$. Furthermore, $\prob(B > q_\alpha) = \alpha$ by the definition of $q_\alpha$ and $\prob(C > 1+\epsilon) \to 0$ by our assumptions. Taking~$\epsilon \to 0$ concludes the proof of the theorem.
\end{proof}

\subsection{Proof of Proposition~\ref{prop:Quantile}}

\begin{proof}
Let $\widehat{F}_\zeta(z) = \frac{1}{T_\smallC} \sum_{t\in \mathcal{T}} \mathbbm{1}\{\zeta_t\leq z\}$ denote the ecdf of $\zeta_t$ and additionally define $\widetilde{F}_\zeta(z) = \frac{1}{T_\smallC} \sum_{t\in \mathcal{T}} \mathbbm{1}\{\widehat{\zeta}_t\leq z\}$. Observe $\abs{F_\zeta(\widehat{q}_\alpha)  - (1 - \alpha)} \leq \abs{F_\zeta(\widehat{q}_\alpha) - \widehat{F}_\zeta(\widehat{q}_\alpha)} + \abs{\widehat{F}_\zeta(\widehat{q}_\alpha) - \widetilde{F}_\zeta(\widehat{q}_\alpha)} + \abs{\widetilde{F}_\zeta(\widehat{q}_\alpha)  - (1 - \alpha)}$. Invoking the theorem of Glivenko--Cantelli, $\abs{F_\zeta(\widehat{q}_\alpha) - \widehat{F}_\zeta(\widehat{q}_\alpha)} = o_p(1)$. Moreover, using $\abs{\zeta_t-\widehat{\zeta}_t} = o_p(1)$ and continuity of the distribution function, it can be shown that $\sup_{z} \abs{\widehat{F}(z) - \widetilde{F}(z)} = o_p(1)$, implying $\abs{\widehat{F}_\zeta(\widehat{q}_\alpha) - \widetilde{F}_\zeta(\widehat{q}_\alpha)} = o_p(1)$. Finally, $\abs{\widetilde{F}_\zeta(\widehat{q}_\alpha)  - (1 - \alpha)} = o_p(1)$ by the definition of empirical quantiles. We thus conclude that $\abs{F_\zeta(\widehat{q}_\alpha)  - (1 - \alpha)} = o_p(1)$.
\end{proof}

\section{Factors: Proof of Lemma~\ref{lemma:RateF}}\label{sec:LemmaRateF}

\begin{proof}
Using $\widehat{\mathbbm{f}}_{\smallO,s}  = \frac{1}{N} \mathbbm{y}_{\smallO,s} \widehat{\bbLambda}_\smallO \mathbbm{D}_\smallO^{-2}$ and $\mathbbm{y}_{\smallO,s} = \mathbbm{f}_{\smallO,s} \bbLambda_\smallO' + \mathbbm{e}_{\smallO,s}$, we get
\begin{equation*}
\widehat{\mathbbm{f}}_{\smallO,s} - \mathbbm{f}_{\smallO,s} \mathbbm{G}_\smallO = \frac{1}{N} (\mathbbm{f}_{\smallO,s} \bbLambda_\smallO' + \mathbbm{e}_{\smallO,s}) \widehat{\bbLambda}_\smallO \mathbbm{D}_\smallO^{-2} - \mathbbm{f}_{\smallO,s} \mathbbm{G}_{\smallO}
= \frac{1}{N} \mathbbm{e}_{\smallO,s} \widehat{\bbLambda}_\smallO\mathbbm{D}_\smallO^{-2}.
\end{equation*}
Taking norms,
\begin{equation*}
\norm{\widehat{\mathbbm{f}}_{\smallO,s} - \mathbbm{f}_{\smallO,s} \mathbbm{G}_{\smallO}} \leq \frac{1}{N} \norm{\mathbbm{e}_{\smallO,s} \widehat{\bbLambda}_\smallO} \normS{\mathbbm{D}_\smallO^{-2}} =  O_p\left(\frac{1}{\lambda^\smallO_{r}}\sqrt{\frac{1}{N}+\frac{1}{T}}\right),
\end{equation*}
where we used $\norm{\mathbbm{e}_{\smallO,s} \widehat{\bbLambda}_\smallO} = O_p(N/\sqrt{T} + \sqrt{N})$ by Lemma~\ref{lemma:Errors}\textit{(iii)} and $\normS{\mathbbm{D}_\smallO^{-2}} = O_p(1/\lambda^\smallO_{r})$ by Lemma~\ref{lemma:MatrixD}\textit{(iii)}.
\end{proof}

\section{Extrapolated Loadings: Proof of Lemma~\ref{lemma:RateLambda}}\label{sec:LemmaRateL}

Before proving Lemma~\ref{lemma:RateLambda}, let us rewrite the matrix $\mathbbm{G}_{\smallO}$. Since
\begin{equation*}
\widehat{\bbLambda}_\smallO = \frac{1}{T_\smallC} \mathbbm{Y}_\smallO'\widehat{\mathbbm{F}}_\smallO = \frac{1}{T_\smallC} (\bbLambda_\smallO \mathbbm{F}_\smallO'+\mathbbm{E}_\smallO')\widehat{\mathbbm{F}}_\smallO,
\end{equation*}
we get
\begin{equation*}
\mathbbm{G}_{\smallO} = \frac{\bbLambda_\smallO'\widehat{\bbLambda}_\smallO}{N} \mathbbm{D}_\smallO^{-2} = \left( \frac{\bbLambda_\smallO'\bbLambda_\smallO}{N} \frac{\mathbbm{F}_\smallO'\widehat{\mathbbm{F}}_\smallO}{T_\smallC} + \frac{\bbLambda_\smallO'\mathbbm{E}_\smallO'\widehat{\mathbbm{F}}_\smallO}{N T_\smallC}\right) \mathbbm{D}_\smallO^{-2}.
\end{equation*}
The above representation of $\mathbbm{G}_{\smallO}$ turns out to be useful in the subsequent pages. Note however that establishing a tight bound on $\mathbbm{G}_{\smallO}$ is not trivial because $\normS{\mathbbm{D}_\smallO^{-2}} = O_p(1/\lambda^\smallO_{r})$ and thus direct application of the spectral norm's submultiplicativity only yields a weak bound.

\begin{lemma}\label{lemma:F}
Under the assumptions of Theorem~1,
\begin{equation*}
\normS{\widehat{\mathbbm{F}}_\smallO-\mathbbm{F}_\smallO\mathbbm{G}_{\smallO}} = O_p\left(
\sqrt{\frac{1}{\lambda^\smallO_{r}}\left(\frac{{r}T}{N} + 1\right)}\right).
\end{equation*}
\end{lemma}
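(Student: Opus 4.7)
The plan is to derive a closed-form identity for $\widehat{\mathbbm{F}}_\smallO - \mathbbm{F}_\smallO\mathbbm{G}_\smallO$ from the defining SVD and then bound each resulting error term in spectral norm via the auxiliary lemmas of the supplementary appendix. From the SVD, $\widehat{\mathbbm{F}}_\smallO\mathbbm{D}_\smallO^2 = (NT_\smallC)^{-1}\mathbbm{Y}_\smallO\mathbbm{Y}_\smallO'\widehat{\mathbbm{F}}_\smallO$, so $\widehat{\mathbbm{F}}_\smallO = (NT_\smallC)^{-1}\mathbbm{Y}_\smallO\mathbbm{Y}_\smallO'\widehat{\mathbbm{F}}_\smallO\mathbbm{D}_\smallO^{-2}$. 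Using $\widehat{\bbLambda}_\smallO = T_\smallC^{-1}\mathbbm{Y}_\smallO'\widehat{\mathbbm{F}}_\smallO$, the quantity $\mathbbm{F}_\smallO\mathbbm{G}_\smallO$ rewrites as $(NT_\smallC)^{-1}\mathbbm{F}_\smallO\bbLambda_\smallO'\mathbbm{Y}_\smallO'\widehat{\mathbbm{F}}_\smallO\mathbbm{D}_\smallO^{-2}$. Substituting $\mathbbm{Y}_\smallO = \mathbbm{F}_\smallO\bbLambda_\smallO' + \mathbbm{E}_\smallO$ in both expressions and subtracting, the $\mathbbm{F}_\smallO\bbLambda_\smallO'\bbLambda_\smallO\mathbbm{F}_\smallO'\widehat{\mathbbm{F}}_\smallO$ block cancels and, inserting $\mathbbm{F}_\smallO'\widehat{\mathbbm{F}}_\smallO/T_\smallC = \mathbbm{H}_\smallO^{-1}$ in the surviving cross-term, I obtain
\[
\widehat{\mathbbm{F}}_\smallO - \mathbbm{F}_\smallO\mathbbm{G}_\smallO = \frac{1}{N}\,\mathbbm{E}_\smallO\bbLambda_\smallO\mathbbm{H}_\smallO^{-1}\mathbbm{D}_\smallO^{-2} + \frac{1}{NT_\smallC}\,\mathbbm{E}_\smallO\mathbbm{E}_\smallO'\widehat{\mathbbm{F}}_\smallO\mathbbm{D}_\smallO^{-2}.
\]

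Next I would bound the two summands separately by submultiplicativity of $\normS{\cdot}$. For the second summand, $\normS{\mathbbm{E}_\smallO\mathbbm{E}_\smallO'} = O_p(\max\{N,T\})$ (Assumption~\ref{ass:Errors}(a)), $\normS{\widehat{\mathbbm{F}}_\smallO} = \sqrt{T_\smallC}$ by the normalisation, and $\normS{\mathbbm{D}_\smallO^{-2}} = O_p(1/\lambda^\smallO_r)$ from the lemma on $\mathbbm{D}_\smallO$ together yield a bound that, under Assumption~\ref{ass:Asymptotics}, is absorbed into the $1/\sqrt{\lambda^\smallO_r}$ contribution in the target rate. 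For the first summand, I would combine $\normS{\mathbbm{H}_\smallO^{-1}} = O_p(1)$ from Lemma~\ref{lemma:H} with a moment estimate on $\normS{\mathbbm{E}_\smallO\bbLambda_\smallO}$ obtained from $\expv\normF{\mathbbm{E}_\smallO\bbLambda_\smallO}^2 = \text{tr}(\bbLambda_\smallO'\,\expv[\mathbbm{E}_\smallO'\mathbbm{E}_\smallO]\,\bbLambda_\smallO)$, using the cross-sectional error condition (Assumption~\ref{ass:Errors}(b)), the uniform eigenfunction bound (Assumption~\ref{ass:Eigenfunctions}), and summability of $\sum_k\lambda_k^\smallO$. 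Multiplying by $\normS{\mathbbm{D}_\smallO^{-2}} = O_p(1/\lambda^\smallO_r)$ should then reproduce the dominant $\sqrt{rT/(N\lambda^\smallO_r)}$ contribution.

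The delicate step is ensuring that the first summand actually attains this rate and not the crude $O_p(\sqrt{T/N}/\lambda^\smallO_r)$ produced by $\normS{\mathbbm{E}_\smallO\bbLambda_\smallO}\leq\normF{\mathbbm{E}_\smallO\bbLambda_\smallO} = O_p(\sqrt{NT})$ coupled with $\normS{\mathbbm{D}_\smallO^{-2}} = O_p(1/\lambda^\smallO_r)$: because $r\lambda^\smallO_r \to 0$ under summability of the eigenvalues, the naive bound overshoots the target by a factor of order $1/(r\lambda^\smallO_r)\to\infty$. To recover the sharp rate I would treat $\bbLambda_\smallO\mathbbm{H}_\smallO^{-1}\mathbbm{D}_\smallO^{-2}$ as a single object: first, using the equivalence $\normS{\mathbbm{G}_\smallO - \mathbbm{H}_\smallO} = o_p(1)$ to replace $\mathbbm{H}_\smallO^{-1}$ by $\mathbbm{G}_\smallO^{-1}$, and then noting that $\mathbbm{G}_\smallO^{-1}\mathbbm{D}_\smallO^{-2} = (\bbLambda_\smallO'\widehat{\bbLambda}_\smallO/N)^{-1}$, whose action is aligned with the eigen-directions of $\bbLambda_\smallO'\bbLambda_\smallO/N \approx \text{diag}(\lambda^\smallO_k)$. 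A trace computation for $\normF{\mathbbm{E}_\smallO\bbLambda_\smallO(\bbLambda_\smallO'\widehat{\bbLambda}_\smallO/N)^{-1}}^2$ that tracks these eigenvalues componentwise then produces the claimed bound, with the $rT/N$ factor arising from the sum over the $r$ columns.
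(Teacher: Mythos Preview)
Your identity for $\widehat{\mathbbm{F}}_\smallO-\mathbbm{F}_\smallO\mathbbm{G}_\smallO$ is the same as the paper's (up to a transpose: $\mathbbm{F}_\smallO'\widehat{\mathbbm{F}}_\smallO/T_\smallC$ equals $(\mathbbm{H}_\smallO')^{-1}$, not $\mathbbm{H}_\smallO^{-1}$), and your treatment of the $\mathbbm{E}_\smallO\mathbbm{E}_\smallO'$ summand is fine. You also correctly diagnose the obstacle in the first summand: splitting $\normS{\mathbbm{E}_\smallO\bbLambda_\smallO}\cdot\normS{\mathbbm{H}_\smallO^{-1}}\cdot\normS{\mathbbm{D}_\smallO^{-2}}$ overshoots. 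But your proposed repair does not close the gap. First, Lemma~\ref{lemma:H}(iii) gives $\normS{\mathbbm{H}_\smallO^{-1}}=O_p(\sqrt{r})$, not $O_p(1)$. Second, the algebra $\mathbbm{G}_\smallO^{-1}\mathbbm{D}_\smallO^{-2}=(\bbLambda_\smallO'\widehat{\bbLambda}_\smallO/N)^{-1}$ is false; $\mathbbm{G}_\smallO=(\bbLambda_\smallO'\widehat{\bbLambda}_\smallO/N)\mathbbm{D}_\smallO^{-2}$ yields $\mathbbm{G}_\smallO^{-1}\mathbbm{D}_\smallO^{-2}=\mathbbm{D}_\smallO^{2}(\bbLambda_\smallO'\widehat{\bbLambda}_\smallO/N)^{-1}\mathbbm{D}_\smallO^{-2}$, and these factors do not commute. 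Even after fixing transposes so that $(\mathbbm{G}_\smallO')^{-1}\mathbbm{D}_\smallO^{-2}=(\widehat{\bbLambda}_\smallO'\bbLambda_\smallO/N)^{-1}$, the ``trace computation'' you sketch cannot be executed: $\widehat{\bbLambda}_\smallO$ depends on $\mathbbm{E}_\smallO$, so one cannot take expectation inside, and to leading order $(\widehat{\bbLambda}_\smallO'\bbLambda_\smallO/N)^{-1}\approx(\bbLambda_\smallO'\bbLambda_\smallO/N)^{-1}\mathbbm{H}_\smallO\approx\mathbbm{B}_\smallO^{-2}\mathbbm{H}_\smallO$, which re-introduces a $1/\lambda^\smallO_r$ factor and puts you back where you started.

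The paper avoids this by a $\mathbbm{B}_\smallO$-conjugation trick rather than by passing to $\mathbbm{G}_\smallO^{-1}$. Since $\mathbbm{B}_\smallO$ and $\mathbbm{D}_\smallO$ are diagonal, one writes $\mathbbm{D}_\smallO^{-2}=\mathbbm{B}_\smallO^{-1}(\mathbbm{B}_\smallO^{-2}\mathbbm{D}_\smallO^{2})^{-1}\mathbbm{B}_\smallO^{-1}$ and inserts a further $\mathbbm{B}_\smallO^{-1}\mathbbm{B}_\smallO$ between $\bbLambda_\smallO$ and $\mathbbm{F}_\smallO'\widehat{\mathbbm{F}}_\smallO$. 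The first summand then factors as
\[
\frac{1}{NT_\smallC}\,(\mathbbm{E}_\smallO\bbLambda_\smallO\mathbbm{B}_\smallO^{-1})\,(\mathbbm{B}_\smallO\mathbbm{F}_\smallO'\widehat{\mathbbm{F}}_\smallO\mathbbm{B}_\smallO^{-1})\,(\mathbbm{B}_\smallO^{-2}\mathbbm{D}_\smallO^{2})^{-1}\,\mathbbm{B}_\smallO^{-1},
\]
and each piece has a clean bound from the appendix: $\normF{\mathbbm{E}_\smallO\bbLambda_\smallO\mathbbm{B}_\smallO^{-1}}=O_p(\sqrt{rNT})$ (Lemma~\ref{lemma:Errors}(i)), $\normS{\mathbbm{B}_\smallO\mathbbm{F}_\smallO'\widehat{\mathbbm{F}}_\smallO\mathbbm{B}_\smallO^{-1}}=O_p(T)$ (Lemma~\ref{lemma:BFFB}(i)), $\normS{\mathbbm{B}_\smallO^{2}\mathbbm{D}_\smallO^{-2}}=O_p(1)$ (Lemma~\ref{lemma:MatrixD}(ii)), and $\normS{\mathbbm{B}_\smallO^{-1}}=O(1/\sqrt{\lambda^\smallO_r})$. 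Multiplying gives exactly $O_p(\sqrt{rT/(N\lambda^\smallO_r)})$. The point is that the conjugated blocks $\mathbbm{B}_\smallO\mathbbm{F}_\smallO'\widehat{\mathbbm{F}}_\smallO\mathbbm{B}_\smallO^{-1}/T_\smallC$ and $\mathbbm{B}_\smallO^{2}\mathbbm{D}_\smallO^{-2}$ are $O_p(1)$, so only a single $\normS{\mathbbm{B}_\smallO^{-1}}=1/\sqrt{\lambda^\smallO_r}$ survives; your route leaves two such factors.
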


\begin{proof}
Similar to (24) in \citeAppendix{bai2021approximate}, it can be shown that
\begin{equation*}
\widehat{\mathbbm{F}}_\smallO - \mathbbm{F}_\smallO\mathbbm{G}_{\smallO} = \left(\frac{\mathbbm{E}_\smallO \bbLambda_\smallO\mathbbm{F}_\smallO'\widehat{\mathbbm{F}}_\smallO\mathbbm{B}_\smallO^{-1}}{N T_\smallC}+\frac{\mathbbm{E}_\smallO\mathbbm{E}_\smallO'\widehat{\mathbbm{F}}_\smallO\mathbbm{B}_\smallO^{-1}}{N T_\smallC}\right)(\mathbbm{B}_\smallO^{-2}\mathbbm{D}_\smallO^{2})^{-1}\mathbbm{B}_\smallO^{-1}.
\end{equation*}
Consequently,
\begin{align*}
&\normS{\widehat{\mathbbm{F}}_\smallO - \mathbbm{F}_\smallO\mathbbm{G}_{\smallO}} \\
&\leq \left(\frac{\normS{\mathbbm{E}_\smallO \bbLambda_\smallO\mathbbm{B}_\smallO^{-1}}\normS{\mathbbm{B}_\smallO\mathbbm{F}_\smallO'\widehat{\mathbbm{F}}_\smallO\mathbbm{B}_\smallO^{-1}}}{N T_\smallC}+\frac{\normS{\mathbbm{E}_\smallO\mathbbm{E}_\smallO'}\normS{\widehat{\mathbbm{F}}_\smallO}\norm{\mathbbm{B}_\smallO^{-1}}}{N T_\smallC}\right)\\
&\qquad \times \normS{\mathbbm{B}_\smallO^{2}\mathbbm{D}_\smallO^{-2}}\normS{\mathbbm{B}_\smallO^{-1}} \\
&= O_p\left(\sqrt{\frac{{r}T}{\lambda^\smallO_{r} N}}\right) + O_p\left(\frac{N + T}{\lambda^\smallO_{r} N\sqrt{T}}\right) = O_p\left(
\sqrt{\frac{1}{\lambda^\smallO_{r}}\left(\frac{{r}T}{N} + 1\right)}\right),
\end{align*}
since $\normS{\mathbbm{E}_\smallO \bbLambda_\smallO\mathbbm{B}_\smallO^{-1}} = O_p(\sqrt{{r}N T})$ by Lemma~\ref{lemma:Errors}\textit{(i)}, $\normS{\mathbbm{B}_\smallO\mathbbm{F}_\smallO'\widehat{\mathbbm{F}}_\smallO\mathbbm{B}_\smallO^{-1}} = O_p(T)$ by Lemma~\ref{lemma:BFFB}\textit{(i)}, $\normS{\mathbbm{E}_\smallO\mathbbm{E}_\smallO'} = O_p(N + T)$ by Assumption~\ref{ass:Errors}(a), $\normS{\widehat{\mathbbm{F}}_\smallO}^2 = T_\smallC$, $\normS{\mathbbm{B}_\smallO^{-1}} = O(\sqrt{1/\lambda^\smallO_{r}})$, $\normS{\mathbbm{B}_\smallO^{2}\mathbbm{D}_\smallO^{-2}} = O_p(1)$ by Lemma~\ref{lemma:MatrixD}\textit{(ii)} and $1/\lambda^\smallO_{r} = O(\min\{N,T\})$ by Assumption~\ref{ass:Asymptotics}.
\end{proof}

\begin{lemma}\label{lemma:ZF}
Under the assumptions of Theorem~1,
\begin{enumerate}[label=(\roman*)]
\item $\max_{i\leq N}\, \Big\lVert\frac{(\mathbbm{z}_{\smallC,i} + \mathbbm{e}_{\smallC,i})' \mathbbm{F}_\smallO}{T_\smallC}\Big \rVert = O_p\Big(\sqrt{{r}\log(N)/T}\,\Big)$,
\item $\max_{i\leq N} \norm{\mathbbm{e}_{\smallC,i}} = O_p(\sqrt{T})$,
\item $\max_{i\leq N} \norm{\mathbbm{z}_{\smallC,i}} = O_p(\sqrt{T})$,
\end{enumerate}
where $\mathbbm{z}_{\smallC,i} = (Z_t(u_i): t \in \mathcal{T})'$ and $\mathbbm{e}_{\smallC,i} = (e_{ti}^{(0)}: t \in \mathcal{T})'$.
\end{lemma}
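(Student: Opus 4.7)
My plan is to derive all three bounds from a single template: a Bernstein-type concentration inequality for $\alpha$-mixing sequences with sub-exponential (Weibull) tails, in the spirit of the one used by \cite{fan2013large} (after Merlev\`ede--Peligrad--Rio), followed by a union bound over $i\leq N$. The required inputs are Assumption~\ref{ass:Tail} (sub-exponential tails for $\xi_{tk}^\smallO/\sqrt{\lambda_k^\smallO}$, $Z_t(u)$, and $e_{ti}^{(0)}$), Assumption~\ref{ass:Mixing} (mixing rate $\exp(-Ch^{\gamma_1})$, inherited by any measurable transform of $\mathcal{X}_t$), and Assumption~\ref{ass:Asymptotics}, whose logarithmic budget $\log N = O(T^{\gamma/(2-\gamma)})$ is precisely what is needed to put the Bernstein bound in its ``Gaussian'' branch.

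For part~(i), I would look at the $k$-th coordinate
\begin{equation*}
S_{i,k} = \frac{1}{T_\smallC}\sum_{t\in\mathcal{T}}\bigl(Z_t(u_i)+e_{ti}^{(0)}\bigr)\,\frac{\xi_{tk}^\smallO}{\sqrt{\lambda_k^\smallO}}.
\end{equation*}
Each summand has mean zero: Proposition~\ref{prop}(i) gives $\expv[Z_t(u_i)\xi_{tk}^\smallO]=0$ since $\xi_{tk}^\smallO$ is a continuous linear functional of $\mathcal{X}_t^\smallO$, while independence in Assumption~\ref{ass:Errors} yields $\expv[e_{ti}^{(0)}\xi_{tk}^\smallO]=0$. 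Since the product of two variables with tails $\exp(-(\cdot/b)^{\gamma_2})$ is sub-Weibull with exponent $\gamma_2/2$, the summand inherits exponential-type tails and the sequence in $t$ remains $\alpha$-mixing. Bernstein then gives $\prob(|S_{i,k}|>x)\leq \exp\bigl(-C\min\{Tx^2,(Tx)^\gamma\}\bigr)$; choosing $x\asymp\sqrt{\log(Nr)/T}$ (which falls in the Gaussian branch under Assumption~\ref{ass:Asymptotics}) and union-bounding over the $Nr$ pairs $(i,k)$ yields $\max_{i,k}|S_{i,k}|=O_p(\sqrt{\log(N)/T})$. The Euclidean-norm bound in~(i) then follows from
\begin{equation*}
\Big\lVert\tfrac{(\mathbbm{z}_{\smallC,i}+\mathbbm{e}_{\smallC,i})'\mathbbm{F}_\smallO}{T_\smallC}\Big\rVert \leq \sqrt{r}\,\max_{k\leq r}|S_{i,k}|.
\end{equation*}

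Parts~(ii) and~(iii) apply the same machinery to non-centred sums of squares. For~(ii), $\norm{\mathbbm{e}_{\smallC,i}}^2=\sum_{t\in\mathcal{T}}(e_{ti}^{(0)})^2$ has expectation $O(T)$ by Assumption~\ref{ass:Errors}(c); since $(e_{ti}^{(0)})^2$ is sub-Weibull with index $\gamma_2/2$ and the errors are i.i.d.\ across $t$, iid Bernstein controls the centred sum by $\sqrt{T\log N}\cdot(\log N)^{2/\gamma_2}$ with probability $1-N^{-2}$, which is $o(T)$ thanks to Assumption~\ref{ass:Asymptotics}; a union bound gives the $O_p(\sqrt T)$ rate. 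Part~(iii) is parallel, now using the mixing version of Bernstein together with Assumption~\ref{ass:Tail}(b) and the moment bound $\expv[Z_t(u)^2]\leq\expv[\sup_u Z_t(u)^2]\leq C$ from Assumption~\ref{ass:MomentsZ}.

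The main technical obstacle is~(i): one must carefully track that the product of two sub-Weibull$(\gamma_2)$ variables remains sub-Weibull, now of index $\gamma_2/2$, so that the Bernstein exponent $\gamma=(1/\gamma_1+3/\gamma_2)^{-1}<1$ from Assumption~\ref{ass:Tail} is the appropriate one, and verify that mixing passes to the product sequence (it does, since the $e$'s are independent of $\mathcal{X}_t$ and the $\xi_{tk}^\smallO$, $Z_t(u_i)$ are measurable functions of $\mathcal{X}_t$). Once this bookkeeping is in place, the union bound over $i$ and $k$ and the rate manipulation dictated by Assumption~\ref{ass:Asymptotics} are routine.
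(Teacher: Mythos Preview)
Your plan for parts~(i) and~(ii) is essentially the paper's proof: the paper also derives sub-Weibull tails for the product $Z_t(u_i)\,\xi_{tk}^\smallO/\sqrt{\lambda_k^\smallO}$ (it records the exponent as $\gamma_2/3$ rather than your $\gamma_2/2$, which is precisely why the parameter in Assumption~\ref{ass:Tail} is $\gamma=(1/\gamma_1+3/\gamma_2)^{-1}$ with a $3$; your sharper exponent would only help, though note it would give a Bernstein rate $(1/\gamma_1+2/\gamma_2)^{-1}>\gamma$, not $\gamma$ itself), invokes the Merlev\`ede--Peligrad--Rio inequality, sets $x=\kappa\sqrt{T\log N}$, and union-bounds over the $Nr$ pairs $(i,k)$. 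For part~(ii) the paper likewise centres the sum of squares and re-runs the Bernstein argument.

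The one place you diverge is part~(iii). The paper does \emph{not} use Bernstein there; it simply observes
\[
\max_{i\leq N}\sum_{t\in\mathcal{T}}Z_t(u_i)^2 \;\leq\; \sum_{t\in\mathcal{T}}\sup_{u\in[0,1]}Z_t(u)^2
\]
and concludes $O_p(T)$ directly from $\expv[\sup_u Z_t(u)^2]<C$ (Assumption~\ref{ass:MomentsZ}) via Markov. This one-line argument sidesteps all tail and mixing bookkeeping and is the reason Assumption~\ref{ass:MomentsZ} is stated with the supremum \emph{inside} the expectation. Your Bernstein route would also reach the conclusion, but the paper's shortcut is both shorter and explains why that particular moment assumption is needed.
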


\begin{proof} \phantom{}\\
\begin{enumerate}[label=(\roman*)]
\item We rely on similar arguments as in \citeAppendix{fan2011high}. Define $\mathbbm{z}_{ti} = Z_t(u_i)$ and $\mathbbm{f}_{\smallO, tk} = \xi_{tk}^\smallO/\sqrt{{\lambda_k^\smallO}}$. Using Assumption~\ref{ass:Tail}, it can be seen that the choice $\gamma_3 = \gamma_2/3$ satisfies
\begin{equation*}
\max_{k\leq r, i \leq N}\prob(\abs{\mathbbm{z}_{ti}\mathbbm{f}_{\smallO, tk}}> \epsilon) \leq \exp(1-(\epsilon/c)^{\gamma_3}),
\end{equation*}
for some $c>0$ and any $\epsilon > 0$. Moreover, Assumption~\ref{ass:Mixing} and Assumption~\ref{ass:Mixing} imply that $(\mathbbm{z}_{ti}\mathbbm{f}_{\smallO, tk}: t \geq 1)$ is strongly mixing with coefficients $\alpha(h) = \exp(-Ch^{\gamma_1})$. Define $\gamma$ by $1/\gamma = 1/\gamma_1 + 3/\gamma_2$. Since~$\gamma < 1$ by Assumption~\ref{ass:Tail}, a variant of the Bernstein inequality (Theorem~1 in \citeAppendix{merlevede2011bernstein}) gives,
\begin{align*}
&\prob \, \Big(\Big\vert \sum_{t\in \mathcal{T}} \mathbbm{z}_{ti}\mathbbm{f}_{\smallO, tk} \Big\vert > x\Big) \\
& \qquad \leq T_\smallC \exp\left(  -\frac{x^\gamma}{C_1}\right) + \exp\left( -\frac{x^2}{C_2(1+C_3 T_\smallC)}\right)\\
&\qquad  \qquad + \exp \left\lbrace -\frac{x^2}{C_4T_\smallC}\exp\left( \frac{x^{\gamma(1-\gamma)}}{C_5\log(x)^\gamma}\right) \right\rbrace,
\end{align*}
for any $x > 0$. Setting $x = \kappa \sqrt{\log(N)T}$, it follows from Assumption~\ref{ass:Asymptotics} that for $\kappa>0$ large enough,
\begin{equation*}
\prob \, \Big(\Big\vert \sum_{t\in \mathcal{T}} \mathbbm{z}_{ti}\mathbbm{f}_{\smallO, tk} \Big\vert > \kappa \sqrt{\log(N)T}\Big) = o(r^{-1}N^{-1}).
\end{equation*}
The Bonferroni inequality implies
\begin{align*}
&\prob \, \Big(\max_{k \leq {r}, i\leq N} \Big\vert \frac{1}{T_\smallC} \sum_{t\in \mathcal{T}} \mathbbm{z}_{ti}\mathbbm{f}_{\smallO, tk} \Big\vert > \kappa \sqrt{\log(N)/T} \Big)\\
&\qquad \qquad \leq N{r} \max_{k \leq {r}, i\leq N}\, \prob \, \Big(\Big\vert  \sum_{t\in \mathcal{T}} \mathbbm{z}_{ti}\mathbbm{f}_{\smallO, tk} \Big\vert > \kappa \sqrt{\log(N)T}\Big) = o(1).
\end{align*}
Consequently,
\begin{equation*}
\max_{k \leq {r}, i\leq N} \Big\vert \frac{1}{T_\smallC} \sum_{t\in \mathcal{T}} \mathbbm{z}_{ti}\mathbbm{f}_{\smallO, tk} \Big\vert = O_p(\sqrt{\log(N)/T}).
\end{equation*}
A bound for $\max_{k\leq {r},i\leq N} \abs{\frac{1}{T_\smallC} \sum_{t\in \mathcal{T}} e_{ti}^{(0)}\mathbbm{f}_{\smallO,tk}}$ can be derived in a similar way using Assumption~\ref{ass:Tail}. The assertion of the lemma thus follows from
\begin{align*}
\max_{i \leq N}\, \Big\lVert\frac{(\mathbbm{z}_{\smallC,i} + \mathbbm{e}_{\smallC,i})' \mathbbm{F}_\smallO}{T_\smallC}\Big \rVert^2 &=\max_{i \leq N}\, \sum_{k=1}^{r} \Big(\frac{1}{T_\smallC} \sum_{t\in \mathcal{T}} (\mathbbm{z}_{ti} + e_{ti}^{(0)})\mathbbm{f}_{\smallO,tk}\Big)^2\\
&\leq {r} \max_{k \leq {r}, i \leq N} \Big\lvert \frac{1}{T_\smallC} \sum_{t\in \mathcal{T}} (\mathbbm{z}_{ti} + e_{ti}^{(0)})\mathbbm{f}_{\smallO,tk} \Big\rvert^2 \\
&= O_p({r} \log(N) /T).
\end{align*}
\item Using Assumptions~\ref{ass:Errors} and \ref{ass:Tail}(c) with similar arguments as before,
\begin{align*}
\max_{i\leq N} \Big\vert \frac{1}{T_\smallC} \sum_{t\in \mathcal{T}} (\mathbbm{e}_{ti}^2-\expv{[\mathbbm{e}_{ti}^2]}) \Big\vert
=O_p(\sqrt{\log(N)/T}).
\end{align*}
Together with $\max_{i \leq N} \expv{(e_{ti}^{(0)})^2]} = O(1)$ by Assumption~\ref{ass:Errors}(c),
\begin{align*}
\max_{i\leq N} \, \norm{\mathbbm{e}_{\smallC,i}}^2 &= \max_{i\leq N} \, \sum_{t\in \mathcal{T}} \mathbbm{e}_{ti}^2 \\
&\leq \max_{i\leq N} \, \Big\lvert \sum_{t\in \mathcal{T}} (\mathbbm{e}_{ti}^2-\expv{[\mathbbm{e}_{ti}^2]})\Big\rvert + T_\smallC \max_{i\leq N} \expv{[\mathbbm{e}_{ti}^2]} = O_p(T),
\end{align*}
where we have used $\log(N) = O(T)$ by Assumption~\ref{ass:Asymptotics}.
\item Concerning the final assertion, we deduce from stationarity of $Z_t$ and Assumption~\ref{ass:MomentsZ},
\begin{equation*}
\max_{i\leq N} \, \norm{\mathbbm{z}_{\smallC,i}}^2 = \max_{i\leq N} \, \sum_{t\in \mathcal{T}} Z_t(u_i)^2 \leq \sum_{t\in \mathcal{T}} \sup_{u\in [0,1]} \, Z_t(u)^2 = O_p(T).
\end{equation*}
\end{enumerate}
\end{proof}

\begin{proof}[Proof of Lemma \ref{lemma:RateLambda}]
The definition $\widehat{\bblambda}^*_{\smallO,i} = \frac{1}{T_\smallC}\mathbbm{y}_{\smallC,i}' \widehat{\mathbbm{F}}_\smallO$ implies
\begin{equation*}
\widehat{\bblambda}^*_{\smallO,i} =  \bblambda^*_{\smallO,i}\,\frac{ \mathbbm{F}_\smallO'\widehat{\mathbbm{F}}_\smallO}{T_\smallC} + \frac{(\mathbbm{z}_{\smallC,i} + \mathbbm{e}_{\smallC,i})' \widehat{\mathbbm{F}}_\smallO}{T_\smallC}.
\end{equation*}
Rearranging terms, we get
\begin{equation*}
\begin{aligned}
\widehat{\bblambda}^*_{\smallO,i} - \bblambda^*_{\smallO,i} (\mathbbm{H}_{\smallO}')^{-1} = \frac{(\mathbbm{z}_{\smallC,i} + \mathbbm{e}_{\smallC,i})' \mathbbm{F}_\smallO\mathbbm{G}_{\smallO}}{T_\smallC} + \frac{(\mathbbm{z}_{\smallC,i} + \mathbbm{e}_{\smallC,i})' (\widehat{\mathbbm{F}}_\smallO - \mathbbm{F}_\smallO\mathbbm{G}_{\smallO})}{T_\smallC}.
\end{aligned}
\end{equation*}
Taking norms,
\begin{align*}
&\max_{i\leq N} \, \norm{\bblambda^*_{\smallO,i} - \bblambda^*_{\smallO,i} (\mathbbm{H}_{\smallO}')^{-1}} \\
& \qquad \leq \max_{i\leq N}\,\Big\lVert \frac{(\mathbbm{z}_{\smallC,i} + \mathbbm{e}_{c,i})' \mathbbm{F}_\smallO}{T_\smallC}\Big\rVert \normS{\mathbbm{G}_{\smallO}} + \left(\max_{i\leq N}\,\frac{\norm{\mathbbm{z}_{\smallC,i}}}{\sqrt{T_\smallC}} + \max_{i\leq N}\,\frac{\norm{\mathbbm{e}_{\smallC,i}}}{\sqrt{T_\smallC}}\right)\frac{\normS{\widehat{\mathbbm{F}}_\smallO - \mathbbm{F}_\smallO\mathbbm{G}_{\smallO}}}{\sqrt{T_\smallC}}\\
& \qquad = O_p\left(\sqrt{\frac{{r}\log(N)}{\lambda^\smallO_{r} T}}\right) + O_p(1)O_p\left(
\sqrt{\frac{1}{\lambda^\smallO_{r}}\left(\frac{{r}}{N} + \frac{1}{T}\right)}\right)\\
& \qquad =O_p\left(
\sqrt{\frac{r}{\lambda^\smallO_{r}}\left(\frac{1}{N} + \frac{\log(N)}{T}\right)}\right),
\end{align*}
where we have used Lemma~\ref{lemma:F}, Lemma~\ref{lemma:ZF} and Lemma~\ref{lemma:H}\textit{(i)}.
\end{proof}

\section{Consistency of eigenvalues}

Define the diagonal entries of $\mathbbm{D}_\smallO^2$ by $\widehat{\gamma}_1^\smallO \geq \widehat{\gamma}_2^\smallO \geq \dots \geq \widehat{\gamma}_{r}^\smallO$.

\begin{lemma}\label{lemma:Eigenvalues}
Under the Assumptions of Theorem~\ref{thm:Convergence},
\begin{equation*}
\max_{k \leq r}\, \abs{\widehat{\gamma}_k^\smallO - \lambda_k^\smallO} = O_p\left(\sqrt{\frac{1}{N} + \frac{1}{T}}\right).
\end{equation*}
\end{lemma}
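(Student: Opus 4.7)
The plan is to recognize $\widehat{\gamma}_1^\smallO \geq \cdots \geq \widehat{\gamma}_r^\smallO$ as the leading eigenvalues of $M_Y := \mathbbm{Y}_\smallO \mathbbm{Y}_\smallO'/(NT_\smallC)$ and to compare them to $\lambda_k^\smallO$ in two steps via Weyl's inequality. First, let $M_X := \mathbbm{X}_\smallO \mathbbm{X}_\smallO'/(NT_\smallC)$ and write
\[
\max_{k\leq r}\, \abs{\widehat{\gamma}_k^\smallO - \lambda_k^\smallO} \leq \normS{M_Y - M_X} + \max_{k\leq r}\, \abs{\gamma_k(M_X) - \lambda_k^\smallO},
\]
where the first term uses Weyl directly and the second will be bounded by a second Weyl-type comparison.

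For the first term, expand $M_Y - M_X = (\mathbbm{X}_\smallO \mathbbm{E}_\smallO' + \mathbbm{E}_\smallO \mathbbm{X}_\smallO' + \mathbbm{E}_\smallO \mathbbm{E}_\smallO')/(NT_\smallC)$. Assumption~\ref{ass:Errors}(a) immediately yields $\normS{\mathbbm{E}_\smallO \mathbbm{E}_\smallO'}/(NT_\smallC) = O_p(1/N + 1/T)$, which is $o_p(\sqrt{1/N+1/T})$. For the cross term, factor $\normS{\mathbbm{X}_\smallO \mathbbm{E}_\smallO'} \leq \normS{\mathbbm{X}_\smallO}\normS{\mathbbm{E}_\smallO}$ and bound $\normS{\mathbbm{X}_\smallO} \leq \normS{\mathbbm{F}_\smallO}\normS{\bbLambda_\smallO} = O_p(\sqrt{NT})$ (using $\bbLambda_\smallO'\bbLambda_\smallO/N$ bounded by the loadings lemma and $\mathbbm{F}_\smallO'\mathbbm{F}_\smallO/T_\smallC$ bounded in probability) together with $\normS{\mathbbm{E}_\smallO}^2 = O_p(N+T)$. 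Dividing by $NT_\smallC$ gives the desired $O_p(\sqrt{1/N + 1/T})$.

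For the second term, the nonzero eigenvalues of $M_X = \mathbbm{F}_\smallO \bbLambda_\smallO' \bbLambda_\smallO \mathbbm{F}_\smallO'/(NT_\smallC)$ coincide with those of $A_N B_T$, where $A_N := \bbLambda_\smallO'\bbLambda_\smallO/N$ and $B_T := \mathbbm{F}_\smallO'\mathbbm{F}_\smallO/T_\smallC$; equivalently, with those of the symmetric matrix $A_N^{1/2} B_T A_N^{1/2}$. Weyl then gives
\[
\max_{k\leq r}\abs{\gamma_k(M_X) - \lambda_k^\smallO} \leq \normS{A_N - \mathrm{diag}(\lambda_k^\smallO)} + \normS{A_N}\,\normS{B_T - \mathbbm{I}_r}.
\]
The first piece is controlled by orthonormality of $(\varphi_k^\smallO)$ in $\mathcal{H}$: each entry of $A_N$ is a Riemann sum approximation to $\sqrt{\lambda_k^\smallO\lambda_\ell^\smallO}\langle\langle\varphi_k^\smallO,\varphi_\ell^\smallO\rangle\rangle = \lambda_k^\smallO\delta_{k\ell}$, and the Lipschitz assumption~\ref{ass:Lipschitz} together with the boundedness of eigenfunctions~\ref{ass:Eigenfunctions} yields the $O(1/N)$ rate. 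The second piece uses $\expv[B_T] = \mathbbm{I}_r$ combined with mixing~\ref{ass:Mixing} and moment control~\ref{ass:MomentsX} to give the $O_p(1/\sqrt{T})$ rate, while $\normS{A_N} = O_p(\lambda_1^\smallO) = O_p(1)$.

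The main obstacle will be ensuring that the spectral-norm control of $A_N - \mathrm{diag}(\lambda_k^\smallO)$ and $B_T - \mathbbm{I}_r$ does not accumulate unwanted factors of $r$: entry-wise bounds of order $1/N$ and $1/\sqrt{T}$ only trivially yield Frobenius norms scaling with $r$, so one must exploit the near-diagonal structure (or invoke the paper's matrix-level lemmas in the appendix, in the spirit of Lemmas~\ref{lemma:MatrixD} and~\ref{lemma:BFFB}) to obtain the clean rate $O_p(\sqrt{1/N + 1/T})$ stated in the lemma.
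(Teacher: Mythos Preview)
Your first step, bounding $\normS{M_Y-M_X}$, is fine and matches the paper. The genuine gap is in the second step. The matrix-level Weyl comparison
\[
\max_{k\le r}\abs{\gamma_k(M_X)-\lambda_k^\smallO}\ \le\ \normS{A_N-\mathrm{diag}(\lambda_k^\smallO)}\ +\ \normS{A_N}\,\normS{B_T-\mathbbm{I}_r}
\]
cannot deliver the $r$-free rate in the lemma. For $B_T=\mathbbm{F}_\smallO'\mathbbm{F}_\smallO/T_\smallC$, the mixing and moment assumptions (A4), (A5) give each entry as $O_p(1/\sqrt{T})$, but this only yields $\normF{B_T-\mathbbm{I}_r}=O_p(r/\sqrt{T})$; even optimal random-matrix concentration for sub-Weibull rows would give at best $O_p(\sqrt{r/T})$, and the paper establishes no such result. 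Either way a factor growing in $r$ survives, which is strictly worse than the claimed $O_p(1/\sqrt{T})$. Your proposed escape hatch---invoking Lemmas~\ref{lemma:MatrixD} or~\ref{lemma:BFFB}---is circular: Lemma~\ref{lemma:MatrixD}(i) is proved \emph{from} the present lemma, and Lemma~\ref{lemma:BFFB} in turn uses Lemma~\ref{lemma:MatrixD}. The ``near-diagonal structure'' you allude to is not present in $B_T$: its off-diagonal entries have no reason to be smaller than the diagonal deviations.

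The paper avoids this entirely by never comparing $\gamma_k(M_X)$ to $\lambda_k^\smallO$ at the $r\times r$ matrix level. Instead it passes through the functional sample covariance: the nonzero eigenvalues of $M_X$ coincide (up to an $O(1/N)$ grid effect) with those of a step-function sample operator $\Gamma^*$; Weyl at the operator level then gives $\abs{\widehat\lambda_k^*-\widehat\lambda_k^\smallO}\le\|\Gamma^*-\widehat\Gamma^\smallO\|_{\mathrm{op}}=O_p(1/\sqrt{N})$ via the Lipschitz covariance (A7), and $\abs{\widehat\lambda_k^\smallO-\lambda_k^\smallO}\le\|\widehat\Gamma^\smallO-\Gamma^\smallO\|_{\mathrm{op}}=O_p(1/\sqrt{T})$ via Theorem~\ref{thm:ConsistencyCovariance}. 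The key point is that these operator norms are bounded by Hilbert--Schmidt (integral) norms, which integrate over $[0,1]^2$ rather than sum over $k\le r$, so the rank never enters. This functional detour is the missing idea in your argument.
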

\begin{proof}
Recall that $\widehat{\gamma}_k^\smallO$ is the squared $k$-th singular value of the matrix $\frac{\mathbb{Y_\smallO}}{\sqrt{NT_\smallC}}$. As a consequence, it is the $k$-th eigenvalue of $\frac{\mathbb{Y}_\smallO'\mathbb{Y}_\smallO}{NT_\smallC}$ (and $\frac{\mathbb{Y}_\smallO\mathbb{Y}_\smallO'}{NT_\smallC}$). To prove the assertion of the lemma, we compare it to eigenvalues of other operators. To this end, let $\widehat{\gamma}_{X, k}^\smallO$ denote the $k$-th eigenvalue of $\frac{\mathbb{X}_\smallO'\mathbb{X}_\smallO}{NT_\smallC}$ ($\frac{\mathbb{X}_\smallO\mathbb{X}_\smallO'}{NT_\smallC}$). In addition, define $\widehat{\lambda}_k^\smallO$ and $\widehat{\lambda}_k^*$ to be the $k$-th eigenvalues of the operators $\widehat{\Gamma}^\smallO$ and $\Gamma^*$, respectively, which are given by
\begin{equation*}
\widehat{\Gamma}^\smallO f = \frac{1}{T_\smallC} \sum_{t \in \mathcal{T}} \langle \langle \mathcal{X}_t^\smallO, f\rangle \rangle \mathcal{X}_t^\smallO, \qquad \Gamma^* f = \frac{1}{T_\smallC} \sum_{t \in \mathcal{T}} \langle \langle \mathcal{X}_t^*, f\rangle \rangle \mathcal{X}_t^*, \qquad f \in \mathcal{H}.
\end{equation*}
Here, $\mathcal{X}_t^*$ refers to the multivariate random function $$\mathcal{X}_t^*(u) = \sum_{i=1}^{N-1} \mathcal{X}_t^\smallO(u_i)\mathbbm{1} \{u \in [u_i,u_{i+1})\}$$ which results from $\mathcal{X}_t^\smallO$ by discretization. Our aim is to show that
\begin{equation*}
\abs{\widehat{\gamma}_k^\smallO - \lambda_k^\smallO} \leq \abs{\widehat{\gamma}_k^\smallO - \widehat{\gamma}_{X,k}^\smallO} + \abs{\widehat{\gamma}_{X,k}^\smallO - \widehat{\lambda}_k^*} + \abs{\widehat{\lambda}_k^* - \widehat{\lambda}_k^\smallO} +
\abs{\widehat{\lambda}_k^\smallO - \lambda_k^\smallO} = O_p\left(\sqrt{\frac{1}{N} + \frac{1}{T}}\right),
\end{equation*}
uniformly for $k \leq r$.
\begin{itemize}
\item Concerning $\abs{\widehat{\gamma}_k^\smallO - \widehat{\gamma}_{X,k}^\smallO}$, we deduce from Weyl's inequality,
\begin{align*}
\abs{\widehat{\gamma}_k^\smallO - \widehat{\gamma}_{X,k}^\smallO} \leq \frac{\norm{\mathbb{Y}_\smallO\mathbb{Y}_\smallO'-\mathbb{X}_\smallO\mathbb{X}_\smallO'}}{NT_\smallC} &\leq \frac{\norm{\mathbb{E}_\smallO\mathbb{E}_\smallO'}+\norm{\mathbb{X}_\smallO\mathbb{E}_\smallO'} + \norm{\mathbb{E}_\smallO\mathbb{X}_\smallO'}}{NT_\smallC} \\
&= O_p\left(\frac{1}{\sqrt{N}} + \frac{1}{T}\right),
\end{align*}
where we used Assumption~\ref{ass:Errors}(a) and Lemma~\ref{lemma:Errors}\textit{(ii)}.

\item Comparing eigenelements of matrices and discretized operators, it can be shown that $\abs{\widehat{\gamma}_{X,k}^\smallO -\widehat{\lambda}_k^*} = O_p(1/\sqrt{N})$.

\item Concerning $\abs{\widehat{\lambda}_k^* - \widehat{\lambda}_k^\smallO}$, we again infer from Weyl's inequality,
\begin{equation*}
\abs{\widehat{\lambda}_k^* - \widehat{\lambda}_k^\smallO} \leq \norm{\widehat{\Gamma}^* - \widehat{\Gamma}}_{\text{op}}.
\end{equation*}
We then observe that $\norm{\widehat{\Gamma}^* - \widehat{\Gamma}}_{\text{op}}^2$ is bounded by
\begin{equation*}
C \sum_{d, d'=0}^D \int_0^1\int_0^1 \Big(\frac{1}{T_\smallC}\sum_{t \in \mathcal{T}} (X_t^{(d)}(u)X_t^{(d')}(v)-X_t^{*(d)}(u)X_t^{*(d')}(v))\Big)^2 \,\du \, \dv,
\end{equation*}
where we have used the notation $X_t^{(0)}(u) = X_t(u)$, and
\begin{align*}
\int_0^1 \expv[(X_t^{(d)}(u) - X_t^{*(d)}(u))^2] \du &= \sum_{i=1}^{N-1} \int_{u_i}^{u_{i+1}} \expv[(X_t^{(d)}(u) - X_t^{(d)}(u_i))^2] \du \\
&\leq C \sum_{i=1}^{N-1} \int_{u_i}^{u_{i+1}} \abs{u - u_i} \, \du = O\left(\frac{1}{N}\right),
\end{align*}
since $\expv[(X_t^{(d)}(u) - X_t^{(d)}(v))^2] \leq C \abs{u-v}$ by Assumption~\ref{ass:Lipschitz}. Combining the above, we get $\abs{\widehat{\lambda}_k^* - \widehat{\lambda}_k^\smallO} = O_p(1/\sqrt{N})$.

\item Finally, concerning $\abs{\widehat{\lambda}_k^\smallO - \lambda_k^\smallO}$, we deduce from Weyl's inequality and Theorem~\ref{thm:ConsistencyCovariance},
\begin{equation*}
\abs{\widehat{\lambda}_k^\smallO - \lambda_k^\smallO} \leq \norm{\widehat{\Gamma}^\smallO - \Gamma^\smallO}_{\text{op}} = O_p(1/\sqrt{T}).
\end{equation*}
\end{itemize}
\end{proof}

\begin{lemma}\label{lemma:MatrixD}
Under the assumptions of Theorem~\ref{thm:Convergence},
\begin{enumerate}[label=(\roman*)]
\item $\normS{\mathbbm{B}_\smallO^{-2}\mathbbm{D}_\smallO^{2}-\mathbbm{I}_{r}} = o_p(1)$,
\item $\normS{\mathbbm{B}_\smallO^{2}\mathbbm{D}_\smallO^{-2}-\mathbbm{I}_{r}} = o_p(1)$,
\item $\normS{\mathbbm{D}_\smallO^{-2}} = O_p(1/\lambda^\smallO_{r})$.
\end{enumerate}
\end{lemma}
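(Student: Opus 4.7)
The plan is to observe that all three items reduce to diagonal-matrix manipulations and then invoke the eigenvalue consistency of Lemma~\ref{lemma:Eigenvalues} together with the rate restrictions in Assumption~\ref{ass:Asymptotics}. By the notation used elsewhere in this appendix, $\mathbbm{B}_\smallO^2 = \text{diag}(\lambda^\smallO_1,\dots,\lambda^\smallO_r)$ and $\mathbbm{D}_\smallO^2 = \text{diag}(\widehat{\gamma}^\smallO_1,\dots,\widehat{\gamma}^\smallO_r)$, so all three matrices appearing in the lemma are diagonal; consequently their spectral norms coincide with the largest absolute value among their diagonal entries, and the argument becomes entirely scalar.

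For part (i), I would write
\begin{equation*}
\normS{\mathbbm{B}_\smallO^{-2}\mathbbm{D}_\smallO^2-\mathbbm{I}_{r}} = \max_{k\leq r} \frac{\abs{\widehat{\gamma}^\smallO_k - \lambda^\smallO_k}}{\lambda^\smallO_k} \leq \frac{1}{\lambda^\smallO_r}\max_{k\leq r} \abs{\widehat{\gamma}^\smallO_k - \lambda^\smallO_k},
\end{equation*}
using that the eigenvalues are ordered decreasingly. Lemma~\ref{lemma:Eigenvalues} then bounds the right-hand side by $O_p\big((1/\lambda^\smallO_r)\sqrt{1/N+1/T}\big)$. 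Assumption~\ref{ass:Asymptotics} yields $1/\lambda^\smallO_r \leq r/\lambda^\smallO_r = o(\sqrt{N})$ and, because $\log(N)\geq 1$ for $N\geq 3$, also $1/\lambda^\smallO_r = o(\sqrt{T/\log(N)}) = o(\sqrt{T})$, so the product is $o_p(1)$, giving (i).

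For part (ii), the cleanest route is to deduce it from (i) via the identity $\mathbbm{B}_\smallO^2\mathbbm{D}_\smallO^{-2} = (\mathbbm{B}_\smallO^{-2}\mathbbm{D}_\smallO^2)^{-1}$, noting that matrix inversion is continuous at the identity: on the event $\normS{\mathbbm{B}_\smallO^{-2}\mathbbm{D}_\smallO^2-\mathbbm{I}_r}\leq 1/2$ (which by (i) has probability tending to one), a Neumann-series bound gives $\normS{\mathbbm{B}_\smallO^2\mathbbm{D}_\smallO^{-2}-\mathbbm{I}_r}\leq 2\normS{\mathbbm{B}_\smallO^{-2}\mathbbm{D}_\smallO^2-\mathbbm{I}_r} = o_p(1)$. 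Equivalently one may work scalarwise: on the same event $\widehat{\gamma}^\smallO_k \geq \lambda^\smallO_k/2$ for every $k\leq r$, hence $\max_k\abs{\lambda^\smallO_k/\widehat{\gamma}^\smallO_k - 1}\leq 2\max_k\abs{\widehat{\gamma}^\smallO_k-\lambda^\smallO_k}/\lambda^\smallO_k = o_p(1)$.

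For part (iii), since $\mathbbm{D}_\smallO^{-2}$ is diagonal,
\begin{equation*}
\normS{\mathbbm{D}_\smallO^{-2}} = \frac{1}{\widehat{\gamma}^\smallO_r} = \frac{1}{\lambda^\smallO_r}\cdot\frac{\lambda^\smallO_r}{\widehat{\gamma}^\smallO_r},
\end{equation*}
and the second factor is $O_p(1)$ by (ii) applied to the $(r,r)$-entry, yielding the claim. The only place where care is needed is verifying the quantitative consequence of Assumption~\ref{ass:Asymptotics} in the step of (i); once that is in hand, parts (ii) and (iii) follow with no additional probabilistic input.
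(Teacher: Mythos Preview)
Your proof is correct and follows essentially the same approach as the paper: both exploit that $\mathbbm{B}_\smallO^2$ and $\mathbbm{D}_\smallO^2$ are diagonal, reduce (i) to the eigenvalue consistency of Lemma~\ref{lemma:Eigenvalues} combined with Assumption~\ref{ass:Asymptotics}, derive (ii) from (i) via a Neumann-series/inverse bound (the paper writes it as $\normS{\mathbbm{B}_\smallO^{2}\mathbbm{D}_\smallO^{-2}-\mathbbm{I}_{r}}\leq \normS{\mathbbm{B}_\smallO^{-2}\mathbbm{D}_\smallO^{2}-\mathbbm{I}_{r}}/(1-\normS{\mathbbm{B}_\smallO^{-2}\mathbbm{D}_\smallO^{2}-\mathbbm{I}_{r}})$), and obtain (iii) from (ii). The only cosmetic difference is that for (iii) the paper uses the matrix-norm inequality $\normS{\mathbbm{D}_\smallO^{-2}}\leq \normS{\mathbbm{B}_\smallO^{-2}}(\normS{\mathbbm{B}_\smallO^{2}\mathbbm{D}_\smallO^{-2}-\mathbbm{I}_r}+1)$ rather than your scalar argument, but this is the same idea.
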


\begin{proof}
\phantom{}\newline
\begin{enumerate}[label=(\roman*)]\item Recall that both $\mathbbm{B}_\smallO$ and $\mathbbm{D}_\smallO$ are diagonal matrices. Then,
\begin{equation*}
\normS{\mathbbm{B}_\smallO^{-2}\mathbbm{D}_\smallO^{2}-\mathbbm{I}_{r}} = \max_{k \leq {r}} \, \Big\vert \frac{\widehat{\gamma}_k^\smallO}{\lambda^\smallO_k}-1 \Big\vert \leq \frac{1}{\lambda^\smallO_{r}} \max_{k \leq {r}}\,  \abs{\widehat{\gamma}_k^\smallO - \lambda^\smallO_k} = O_p\left(\frac{1}{\lambda^\smallO_{r}}\sqrt{\frac{1}{N} + \frac{1}{T}}\right),
\end{equation*}
where the last bound follows from Lemma~\ref{lemma:Eigenvalues}. The assertion then follows from Assumption~\ref{ass:Asymptotics}.
\item Observe
\begin{equation*}
\normS{\mathbbm{B}_\smallO^{2}\mathbbm{D}_\smallO^{-2}-\mathbbm{I}_{r}} \leq \frac{\normS{\mathbbm{B}_\smallO^{-2}\mathbbm{D}_\smallO^{2}-\mathbbm{I}_{r}}}{1-\normS{\mathbbm{B}_\smallO^{-2}\mathbbm{D}_\smallO^{2}-\mathbbm{I}_{r}}} = o_p(1),
\end{equation*}
by the previous result.
\item We get
\begin{align*}
\normS{\mathbbm{D}_\smallO^{-2}} \leq \normS{\mathbbm{B}_\smallO^{-2}} (\normS{\mathbbm{B}_\smallO^{2}\mathbbm{D}_\smallO^{-2} - \mathbbm{I}_{r}} + \normS{\mathbbm{I}_{r}}) &= O(1/\lambda^\smallO_{r})(o_p(1) + O(1)) \\
& = O_p(1/\lambda^\smallO_{r}).
\end{align*}
\end{enumerate}
\end{proof}

\section{Asymptotic equivalence of rotation matrices}\label{sec:Equivalence}

\begin{lemma}\label{lemma:H}
Under the assumptions of Theorem~1,
\begin{enumerate}[label=(\roman*)]
\item $\normS{\mathbbm{G}_{\smallO}} = O_p(\sqrt{1/\lambda^\smallO_{r}})$,
\item $\normS{\mathbbm{H}_{\smallO}} = O_p(\sqrt{1/\lambda^\smallO_{r}})$,
\item $\normS{\mathbbm{H}_{\smallO}^{-1}} = O_p(\sqrt{1/r})$.
\end{enumerate}
\end{lemma}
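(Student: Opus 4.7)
All three bounds follow from a single algebraic reformulation of $\mathbbm{G}_\smallO$ via the singular value decomposition of $\mathbbm{Y}_\smallO$, combined with consistency of the sample eigenvalues (Lemma~\ref{lemma:Eigenvalues}); parts (ii)--(iii) rely in addition on the asymptotic equivalence $\normS{\mathbbm{G}_\smallO-\mathbbm{H}_\smallO}=o_p(1)$ established in Section~\ref{sec:Equivalence}.

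\textbf{Part (i).} Substituting $\widehat{\bbLambda}_\smallO=\sqrt{N}\,\mathbbm{V}_\smallO\mathbbm{D}_\smallO$ into the definition of $\mathbbm{G}_\smallO$ collapses one power of $\mathbbm{D}_\smallO$:
\begin{equation*}
\mathbbm{G}_\smallO=\frac{\bbLambda_\smallO'\widehat{\bbLambda}_\smallO}{N}\mathbbm{D}_\smallO^{-2}=\frac{\bbLambda_\smallO'\mathbbm{V}_\smallO}{\sqrt{N}}\mathbbm{D}_\smallO^{-1}.
\end{equation*}
Since $\normS{\mathbbm{V}_\smallO}=1$, submultiplicativity gives $\normS{\mathbbm{G}_\smallO}\leq\normS{\bbLambda_\smallO/\sqrt{N}}\cdot\normS{\mathbbm{D}_\smallO^{-1}}$. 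The entries of $\bbLambda_\smallO'\bbLambda_\smallO/N$ are Riemann sums for $\sqrt{\lambda_k^\smallO\lambda_{k'}^\smallO}\,\langle\langle\varphi_k^\smallO,\varphi_{k'}^\smallO\rangle\rangle=\lambda_k^\smallO\delta_{k,k'}$; the Lipschitz condition (Assumption~\ref{ass:Lipschitz}), boundedness of eigenfunctions (Assumption~\ref{ass:Eigenfunctions}) and summability of the $\lambda_k^\smallO$ together deliver $\normS{\bbLambda_\smallO'\bbLambda_\smallO/N-\mathbbm{B}_\smallO^2}=o(1)$, hence $\normS{\bbLambda_\smallO/\sqrt{N}}^2\leq\lambda_1^\smallO+o(1)=O(1)$. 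The remaining factor $\normS{\mathbbm{D}_\smallO^{-1}}=1/\sqrt{\widehat{\gamma}_r^\smallO}$ is $O_p(1/\sqrt{\lambda_r^\smallO})$ by Lemma~\ref{lemma:MatrixD}\textit{(iii)}, which yields the claim.

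\textbf{Parts (ii) and (iii).} Given (i), part (ii) is immediate by the triangle inequality: $\normS{\mathbbm{H}_\smallO}\leq\normS{\mathbbm{G}_\smallO}+\normS{\mathbbm{G}_\smallO-\mathbbm{H}_\smallO}=O_p(\sqrt{1/\lambda_r^\smallO})+o_p(1)$, and the first term dominates because $1/\lambda_r^\smallO\to\infty$ under Assumption~\ref{ass:Asymptotics}. For (iii) I would work from the identity $\mathbbm{H}_\smallO^{-1}=\widehat{\mathbbm{F}}_\smallO'\mathbbm{F}_\smallO/T_\smallC$, decompose $\widehat{\mathbbm{F}}_\smallO=\mathbbm{F}_\smallO\mathbbm{G}_\smallO+(\widehat{\mathbbm{F}}_\smallO-\mathbbm{F}_\smallO\mathbbm{G}_\smallO)$, and use Lemma~\ref{lemma:F} to control the residual cross product $(\widehat{\mathbbm{F}}_\smallO-\mathbbm{F}_\smallO\mathbbm{G}_\smallO)'\mathbbm{F}_\smallO/T_\smallC$. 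The leading piece is $\mathbbm{G}_\smallO'\,\mathbbm{F}_\smallO'\mathbbm{F}_\smallO/T_\smallC$; a law of large numbers on the rescaled scores $\mathbbm{f}_{\smallO,t}$ (Assumptions~\ref{ass:Sample} and~\ref{ass:MomentsX}) drives $\mathbbm{F}_\smallO'\mathbbm{F}_\smallO/T_\smallC$ to $\mathbbm{I}_r$, so that $\normS{\mathbbm{H}_\smallO^{-1}}$ inherits the rate from $\normS{\mathbbm{G}_\smallO}$ already obtained in (i).

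\textbf{Main obstacle.} The central subtlety is \emph{uniform} control of the Riemann-sum error $\bbLambda_\smallO'\bbLambda_\smallO/N-\mathbbm{B}_\smallO^2$ across the diverging range $k,k'\leq r$: entrywise bounds must be promoted to a spectral-norm bound without picking up an unaffordable factor of $r$. Assumption~\ref{ass:Lipschitz} yields an entrywise error of size $\sqrt{\lambda_k^\smallO\lambda_{k'}^\smallO}/N$, and combining this with Assumption~\ref{ass:Eigenfunctions} and summability of the eigenvalues produces a Frobenius-type bound that is kept in check by the joint growth restriction $r/\lambda_r^\smallO=o(\sqrt{N})$ of Assumption~\ref{ass:Asymptotics}.
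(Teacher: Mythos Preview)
Your part (i) is correct and in fact more direct than the paper's route. The paper proves (ii) first, via Lemma~\ref{lemma:BFFB}\textit{(ii)}, by writing
\[
\normS{\mathbbm{H}_\smallO}\;\leq\;\normS{\mathbbm{B}_\smallO^{-1}}\,\Big\lVert\big(\mathbbm{B}_\smallO\widehat{\mathbbm{F}}_\smallO'\mathbbm{F}_\smallO\mathbbm{B}_\smallO^{-1}/T_\smallC\big)^{-1}\Big\rVert_2\,\normS{\mathbbm{B}_\smallO}=O_p(1/\sqrt{\lambda_r^\smallO}),
\]
and only then derives (i) from (ii) together with Lemma~\ref{lemma:Rotation}. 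Your algebraic collapse $\mathbbm{G}_\smallO=(\bbLambda_\smallO'\mathbbm{V}_\smallO/\sqrt{N})\,\mathbbm{D}_\smallO^{-1}$ bypasses $\mathbbm{H}_\smallO$ entirely and needs only the crude bound $\normS{\bbLambda_\smallO/\sqrt{N}}=O(1)$, which is even easier than the uniform Riemann control you describe in your ``main obstacle'' paragraph.

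Your part (ii), however, is circular as written. You deduce (ii) from (i) plus $\normS{\mathbbm{G}_\smallO-\mathbbm{H}_\smallO}=o_p(1)$, but in the paper this equivalence is Lemma~\ref{lemma:Rotation}, whose final step explicitly invokes Lemma~\ref{lemma:H}\textit{(ii)}. The loop can be broken: the proof of Lemma~\ref{lemma:Rotation} first establishes $\normS{\mathbbm{I}_r-\mathbbm{H}_\smallO^{-1}\mathbbm{G}_\smallO}=o_p(1)$ \emph{without} using (ii), and from this one gets $\normS{\mathbbm{H}_\smallO}=\normS{\mathbbm{G}_\smallO(\mathbbm{H}_\smallO^{-1}\mathbbm{G}_\smallO)^{-1}}\leq\normS{\mathbbm{G}_\smallO}(1+o_p(1))$. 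But you have not carried out this step, and citing Section~\ref{sec:Equivalence} as a black box is self-referential.

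For part (iii) the paper's argument is one line:
\[
\normS{\mathbbm{H}_\smallO^{-1}}=\normS{\widehat{\mathbbm{F}}_\smallO'\mathbbm{F}_\smallO/T_\smallC}\leq\normS{\widehat{\mathbbm{F}}_\smallO}\,\normS{\mathbbm{F}_\smallO}/T_\smallC=O_p(\sqrt{r}),
\]
using $\normS{\widehat{\mathbbm{F}}_\smallO}=\sqrt{T_\smallC}$ and $\normS{\mathbbm{F}_\smallO}\leq\normF{\mathbbm{F}_\smallO}=O_p(\sqrt{rT_\smallC})$. Your decomposition through $\mathbbm{G}_\smallO'\,\mathbbm{F}_\smallO'\mathbbm{F}_\smallO/T_\smallC$ is more laborious (it needs a uniform LLN over the growing block $\mathbbm{F}_\smallO'\mathbbm{F}_\smallO/T_\smallC$) and delivers only $O_p(1/\sqrt{\lambda_r^\smallO})$, which is never better than $O_p(\sqrt{r})$ since summability forces $\lambda_r^\smallO\leq C/r$, and can be much worse under fast eigenvalue decay. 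Note also that the stated bound $O_p(\sqrt{1/r})$ is evidently a misprint for $O_p(\sqrt{r})$, as the paper's own proof confirms; neither your argument nor the paper's reaches $O_p(\sqrt{1/r})$.
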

\begin{proof}
\phantom{}\newline
\begin{enumerate}[label=(\roman*)]
\item Follows from the next lemma and $\normS{\mathbbm{H}_{\smallO}} = O_p(\sqrt{1/\lambda^\smallO_{r}})$.
\item We deduce from Lemma~\ref{lemma:BFFB}\textit{(ii)},
\begin{align*}
\normS{\mathbbm{H}_{\smallO}} &= \Big\lVert\Big(\frac{\widehat{\mathbbm{F}}_\smallO'\mathbbm{F}_\smallO}{T_\smallC}\Big)^{-1}\Big\rVert_2 \leq \normS{\mathbbm{B}_\smallO^{-1}} \Big\lVert\Big( \frac{\mathbbm{B}_\smallO\widehat{\mathbbm{F}}_\smallO'\mathbbm{F}_\smallO \mathbbm{B}_\smallO^{-1} }{T_\smallC}\Big)^{-1}\Big\rVert_2 \normS{\mathbbm{B}_\smallO}
\\
& = O_p\left(\frac{1}{\sqrt{\lambda^\smallO_{r}}}\right)O_p(1)O_p(1) = O_p\left(\frac{1}{\sqrt{\lambda^\smallO_{r}}}\right).
\end{align*}
\item We observe
\begin{equation*}
\normS{\mathbbm{H}_{\smallO}^{-1}} \leq \normS{\widehat{\mathbbm{F}}_\smallO}\normS{\mathbbm{F}_\smallO}/T_\smallC = O_p(\sqrt{{r}}).
\end{equation*}
\end{enumerate}
\end{proof}

\begin{lemma}\label{lemma:Rotation}
Under the assumptions of Theorem~1,
\begin{equation*}
\normS{\mathbbm{G}_{\smallO}-\mathbbm{H}_{\smallO}} = O_p\left(\frac{1}{\lambda^\smallO_{r}}\sqrt{\frac{{r}}{N} + \frac{1}{T}}\right).
\end{equation*}
\end{lemma}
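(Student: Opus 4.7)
The plan is to exploit the normalization convention $\widehat{\mathbbm{F}}_\smallO'\widehat{\mathbbm{F}}_\smallO/T_\smallC = \mathbbm{I}_r$ to write $\mathbbm{G}_\smallO - \mathbbm{H}_\smallO$ as a product involving $\widehat{\mathbbm{F}}_\smallO - \mathbbm{F}_\smallO\mathbbm{G}_\smallO$, so that Lemma~\ref{lemma:F} directly delivers the required rate. The key observation is that by writing $\widehat{\mathbbm{F}}_\smallO = \mathbbm{F}_\smallO\mathbbm{G}_\smallO + (\widehat{\mathbbm{F}}_\smallO - \mathbbm{F}_\smallO\mathbbm{G}_\smallO)$ and left-multiplying by $\widehat{\mathbbm{F}}_\smallO'/T_\smallC$, the normalization gives
\begin{equation*}
\mathbbm{I}_r \;=\; \frac{\widehat{\mathbbm{F}}_\smallO'\mathbbm{F}_\smallO}{T_\smallC}\mathbbm{G}_\smallO + \frac{\widehat{\mathbbm{F}}_\smallO'(\widehat{\mathbbm{F}}_\smallO - \mathbbm{F}_\smallO\mathbbm{G}_\smallO)}{T_\smallC} \;=\; \mathbbm{H}_\smallO^{-1}\mathbbm{G}_\smallO + \frac{\widehat{\mathbbm{F}}_\smallO'(\widehat{\mathbbm{F}}_\smallO - \mathbbm{F}_\smallO\mathbbm{G}_\smallO)}{T_\smallC}.
\end{equation*}
Left-multiplying by $\mathbbm{H}_\smallO$ and rearranging yields the identity
\begin{equation*}
\mathbbm{G}_\smallO - \mathbbm{H}_\smallO \;=\; -\mathbbm{H}_\smallO \cdot \frac{\widehat{\mathbbm{F}}_\smallO'(\widehat{\mathbbm{F}}_\smallO - \mathbbm{F}_\smallO\mathbbm{G}_\smallO)}{T_\smallC}.
\end{equation*}

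From here the argument is routine: take spectral norms and apply submultiplicativity to obtain
\begin{equation*}
\normS{\mathbbm{G}_\smallO - \mathbbm{H}_\smallO} \;\leq\; \normS{\mathbbm{H}_\smallO} \cdot \frac{\normS{\widehat{\mathbbm{F}}_\smallO}}{\sqrt{T_\smallC}} \cdot \frac{\normS{\widehat{\mathbbm{F}}_\smallO - \mathbbm{F}_\smallO\mathbbm{G}_\smallO}}{\sqrt{T_\smallC}}.
\end{equation*}
The middle factor equals $1$ since $\normS{\widehat{\mathbbm{F}}_\smallO}^2/T_\smallC = \normS{\widehat{\mathbbm{F}}_\smallO'\widehat{\mathbbm{F}}_\smallO/T_\smallC} = 1$. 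By Lemma~\ref{lemma:H}(ii) (whose proof does \emph{not} depend on the present lemma), the first factor is $O_p(1/\sqrt{\lambda_r^\smallO})$. By Lemma~\ref{lemma:F}, the third factor is $O_p(\sqrt{(1/\lambda_r^\smallO)(r/N + 1/T)})$. Multiplying the three bounds produces exactly $O_p((1/\lambda_r^\smallO)\sqrt{r/N + 1/T})$, as claimed.

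There is no real obstacle in this argument; the only creative step is recognizing that the normalization $\widehat{\mathbbm{F}}_\smallO'\widehat{\mathbbm{F}}_\smallO/T_\smallC = \mathbbm{I}_r$ lets us relate $\mathbbm{H}_\smallO^{-1}\mathbbm{G}_\smallO$ to $\mathbbm{I}_r$ modulo a remainder of precisely the form controlled by Lemma~\ref{lemma:F}. One should just be careful to apply Lemma~\ref{lemma:H}(ii) (bound on $\mathbbm{H}_\smallO$) rather than Lemma~\ref{lemma:H}(i) (bound on $\mathbbm{G}_\smallO$), since the latter is itself derived from the present Lemma~\ref{lemma:Rotation} and would create a circular dependence.
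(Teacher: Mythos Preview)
Your proof is correct and shares the same skeleton as the paper's: both establish that $\mathbbm{I}_r - \mathbbm{H}_\smallO^{-1}\mathbbm{G}_\smallO$ is small, then left-multiply by $\mathbbm{H}_\smallO$ using Lemma~\ref{lemma:H}\textit{(ii)}. The difference is purely in packaging. You recognize the remainder as $\widehat{\mathbbm{F}}_\smallO'(\widehat{\mathbbm{F}}_\smallO - \mathbbm{F}_\smallO\mathbbm{G}_\smallO)/T_\smallC$ and invoke Lemma~\ref{lemma:F} directly, whereas the paper expands this same quantity into its explicit two-term form (involving $\mathbbm{E}_\smallO\bbLambda_\smallO$ and $\mathbbm{E}_\smallO\mathbbm{E}_\smallO'$) and re-bounds each piece from scratch via Lemmas~\ref{lemma:Errors}\textit{(i)}, \ref{lemma:BFFB}\textit{(i)}, and \ref{lemma:MatrixD}\textit{(ii)}. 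Your route is more economical since it reuses work already done in Lemma~\ref{lemma:F}, and your remark about avoiding Lemma~\ref{lemma:H}\textit{(i)} to prevent circularity is exactly right.
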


\begin{proof}
It can be shown that
\begin{equation*}
\mathbbm{I}_{r} - \mathbbm{H}_{\smallO}^{-1} \mathbbm{G}_{\smallO} = \Big( \frac{\widehat{\mathbbm{F}}_\smallO'\mathbbm{E}_\smallO\bbLambda_\smallO\mathbbm{F}_\smallO'\widehat{\mathbbm{F}}_\smallO\mathbbm{B}_\smallO^{-2}}{N T_\smallC^2} + \frac{\widehat{\mathbbm{F}}_\smallO'\mathbbm{E}_\smallO\mathbbm{E}_\smallO'\widehat{\mathbbm{F}}_\smallO\mathbbm{B}_\smallO^{-2}}{N T_\smallC^2}\Big) (\mathbbm{B}_\smallO^{-2}\mathbbm{D}_\smallO^2)^{-1}.
\end{equation*}
The first term in the braces is bounded by
\begin{equation*}
\frac{\normS{\widehat{\mathbbm{F}}_\smallO}\normS{\mathbbm{E}_\smallO\bbLambda_\smallO\mathbbm{B}_\smallO^{-1}}\normS{\mathbbm{B}_\smallO\mathbbm{F}_\smallO'\widehat{\mathbbm{F}}_\smallO\mathbbm{B}_\smallO^{-1}}\normS{\mathbbm{B}_\smallO^{-1}}}{N T_\smallC^2} = O_p\left(\sqrt{\frac{{r}}{\lambda^\smallO_{r} N}}\right),
\end{equation*}
as it holds that $\norm{\widehat{\mathbbm{F}}_\smallO}^2_2 = T_\smallC$, $\normS{\mathbbm{E}_\smallO \bbLambda_\smallO\mathbbm{B}_\smallO^{-1}} = O_p(\sqrt{r N T})$ by Lemma~\ref{lemma:Errors}\textit{(i)}, $\normS{\mathbbm{B}_\smallO\mathbbm{F}_\smallO'\widehat{\mathbbm{F}}_\smallO\mathbbm{B}_\smallO^{-1}} = O_p(T)$ by Lemma~\ref{lemma:BFFB}\textit{(i)}, and $\normS{\mathbbm{B}_\smallO^{-1}} = O(\sqrt{1/\lambda^\smallO_{r}})$. The second term in the braces is bounded by
\begin{equation*}
\frac{\normS{\widehat{\mathbbm{F}}_\smallO}\normS{\mathbbm{E}_\smallO\mathbbm{E}_\smallO'}\normS{\widehat{\mathbbm{F}}_\smallO}\normS{\mathbbm{B}_\smallO^{-2}}}{N T_\smallC^2} = O_p\left(\frac{1}{\lambda^\smallO_{r}}\left(\frac{1}{N} + \frac{1}{T}\right)\right),
\end{equation*}
where we have used the additional fact that $\normS{\mathbbm{E}_\smallO\mathbbm{E}_\smallO'} = O_p(N + T)$. Combining the above with $\normS{\mathbbm{B}_\smallO^{2}\mathbbm{D}_\smallO^{-2}} = O_p(1)$ by Lemma~\ref{lemma:MatrixD}\textit{(ii)} and $1/\lambda^\smallO_{r} = O(\min\{N, T\})$ by Assumption~\ref{ass:Asymptotics}, we obtain
\begin{equation*}
\mathbbm{I}_{r} - \mathbbm{H}_{\smallO}^{-1} \mathbbm{G}_{\smallO} = O_p\left(\sqrt{\frac{1}{\lambda^\smallO_{r}}\left(\frac{{r}}{N} + \frac{1}{T}\right)}\right).
\end{equation*}
Finally, the above together with $\normS{\mathbbm{H}_{\smallO}} = O_p(\sqrt{1/\lambda^\smallO_{r}})$ by Lemma~\ref{lemma:H}\textit{(ii)} concludes the proof.
\end{proof}

\section{Supplementary lemmas}

\begin{lemma}\label{lemma:LL}
Under the assumptions of Theorem~1,
\begin{equation*}
\Big\lVert \mathbbm{I}_{r} - \frac{\mathbbm{B}_\smallO^{-1}\bbLambda_\smallO'\bbLambda_\smallO\mathbbm{B}_\smallO^{-1}}{N}\Big\rVert_\text{F} = O\Big(\frac{r}{\lambda_r^\smallO N}\Big).
\end{equation*}
\end{lemma}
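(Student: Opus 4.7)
The plan is to express each entry of $\mathbbm{B}_\smallO^{-1}\bbLambda_\smallO'\bbLambda_\smallO\mathbbm{B}_\smallO^{-1}/N$ explicitly and recognise it as a Riemann sum approximating an entry of the identity matrix. With $\mathbbm{B}_\smallO = \operatorname{diag}(\sqrt{\lambda_1^\smallO},\dots,\sqrt{\lambda_r^\smallO})$, the factors $\sqrt{\lambda_k^\smallO}$ carried by the columns of $\bbLambda_\smallO$ cancel against $\mathbbm{B}_\smallO^{-1}$, leaving
\begin{equation*}
\Big(\frac{\mathbbm{B}_\smallO^{-1}\bbLambda_\smallO'\bbLambda_\smallO\mathbbm{B}_\smallO^{-1}}{N}\Big)_{k\ell}
= \frac{1}{N}\sum_{u_i \in \bigO} {\varphi_k^\smallO}^{(0)}(u_i)\,{\varphi_\ell^\smallO}^{(0)}(u_i)
+ \sum_{d=1}^D \frac{1}{N}\sum_{i=1}^N {\varphi_k^\smallO}^{(d)}(u_i)\,{\varphi_\ell^\smallO}^{(d)}(u_i).
\end{equation*}
Since ${\varphi_k^\smallO}^{(0)}$ vanishes off $\bigO$ (because $X^\smallO$ does, as can be read off the eigenequation at such points), the right-hand side is a Riemann sum for $\langle\langle \varphi_k^\smallO, \varphi_\ell^\smallO\rangle\rangle = \delta_{k\ell}$.

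The next step is a Lipschitz estimate for the eigenfunctions. Starting from the componentwise eigenequation
\begin{equation*}
\lambda_k^\smallO\, {\varphi_k^\smallO}^{(d)}(u) = \sum_{d'=0}^D \int_0^1 \operatorname{Cov}\bigl(\mathcal{X}^{\smallO(d)}(u),\mathcal{X}^{\smallO(d')}(v)\bigr)\, {\varphi_k^\smallO}^{(d')}(v)\, \dv,
\end{equation*}
together with Assumption \ref{ass:Lipschitz} on the covariance and Cauchy-Schwarz (using $\norm{\varphi_k^\smallO}_\mathcal{H} = 1$), I obtain
\begin{equation*}
\big|{\varphi_k^\smallO}^{(d)}(u) - {\varphi_k^\smallO}^{(d)}(u')\big| \leq \frac{C}{\lambda_k^\smallO}\,|u - u'|.
\end{equation*}
Combined with the uniform boundedness in Assumption \ref{ass:Eigenfunctions}, each product ${\varphi_k^\smallO}^{(d)}{\varphi_\ell^\smallO}^{(d)}$ is Lipschitz with constant at most $C(1/\lambda_k^\smallO + 1/\lambda_\ell^\smallO) = O(1/\lambda_r^\smallO)$.

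A standard estimate for equispaced Riemann sums of Lipschitz functions of order $L/N$ then shows that every entry of $\mathbbm{I}_r - \mathbbm{B}_\smallO^{-1}\bbLambda_\smallO'\bbLambda_\smallO\mathbbm{B}_\smallO^{-1}/N$ is $O(1/(\lambda_r^\smallO N))$. Summing $r^2$ squared entries and taking a square root gives the claimed Frobenius bound $O(r/(\lambda_r^\smallO N))$. The main obstacle is the Lipschitz estimate for the first component ${\varphi_k^\smallO}^{(0)}$, where the indicator $\mathbbm{1}\{u \in \bigO\}$ needs care (the Lipschitz bound only applies for $u,u'\in\bigO$), together with the boundary of $\bigO$ in the Riemann sum: since $\bigO$ is typically a finite union of intervals, any grid-boundary misalignment contributes at most $O(1/N)$ per endpoint and is absorbed into the final rate.
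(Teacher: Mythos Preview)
Your proposal is correct and follows essentially the same route as the paper: derive a Lipschitz bound $|{\varphi_k^\smallO}^{(d)}(u)-{\varphi_k^\smallO}^{(d)}(v)|\le C|u-v|/\lambda_k^\smallO$ from the eigenequation and Assumption~\ref{ass:Lipschitz}, transfer it to the products via Assumption~\ref{ass:Eigenfunctions}, bound each matrix entry as a Riemann--sum error of order $1/(\lambda_r^\smallO N)$, and take the Frobenius norm over $r^2$ entries. Your remark on the $d=0$ component and the boundary of $\bigO$ is in fact slightly more careful than the paper, which simply writes the Riemann sum over all $i\le N$ and all $d$ without singling out the discontinuity of ${\varphi_k^\smallO}^{(0)}$ at $\partial\bigO$.
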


\begin{proof}
First, we show that for any $k \leq r$ and $d \in \{0, \dots, D\}$,
\begin{equation}\label{eq:phi_kLipschitz}
\abs{{\varphi_k^\smallO}^{(d)}(u)-{\varphi_k^\smallO}^{(d)}(v)} \leq \frac{C}{\lambda_k^\smallO} \abs{u-v}.
\end{equation}
To this end, note that by the eigenequations,
\begin{equation*}
\Gamma^\smallO({\varphi_k^\smallO})(u) = \lambda_k^\smallO \varphi_k^\smallO(u),
\end{equation*}
whence,
\begin{align*}
{\varphi_k^\smallO}(u)-{\varphi_k^\smallO}(v) &= \frac{1}{\lambda_k^\smallO}\Big(\Gamma^\smallO({\varphi_k^\smallO})(u)-\Gamma^\smallO({\varphi_k^\smallO})(v)\Big) \\
&= \frac{1}{\lambda_k^\smallO}  \langle \langle \expv[ \mathcal{X}^\smallO( \mathcal{X}^\smallO(u)- \mathcal{X}^\smallO(v))], \varphi_k^\smallO \rangle \rangle.
\end{align*}
For the $d$-th component, this reads as
\begin{equation*}
{\varphi_k^\smallO}^{(d)}(u)-{\varphi_k^\smallO}^{(d)}(v) = \frac{1}{\lambda_k^\smallO} \sum_{d'=0}^D \langle \expv[X^{(d')}(X^{(d)}(u)- X^{(d)}(v))], {\varphi_k^\smallO}^{(d')} \rangle_2
\end{equation*}
Using an argument based on Cauchy-Schwarz, it suffices to bound the term $\norm{\expv[X^{(d')}(X^{(d)}(u)- X^{(d)}(v))]}^2$. This, however, follows directly from Assumption~\ref{ass:Lipschitz} and shows \eqref{eq:phi_kLipschitz}.

Accordingly, the eigenfunctions~${\varphi_k^\smallO}^{(d)}$ are Lipschitz continuous with Lipschitz constant $C/\lambda_k^\smallO$. By boundedness, also the product~${\varphi_k^\smallO}^{(d)}{\varphi_\ell^\smallO}^{(d)}$ is Lipschitz with constant $C/\max\{\lambda_k^\smallO, \lambda_\ell^\smallO\}$. A simple property of Riemann sum then implies
\begin{equation*}
\abs{\frac{1}{N}\sum_{d=0}^D \sum_{i=1}^N {\varphi_k^{\smallO}}^{(d)}(u_i){\varphi_\ell^{\smallO}}^{(d)}(u_i)-\delta_{k\ell}} \leq \frac{C}{\lambda_r^\smallO N},
\end{equation*}
uniformly for $k,  \ell \in \{1, \dots, r\}$. We conclude
\begin{align*}
\Big\lVert \mathbbm{I}_{r} - \frac{\mathbbm{B}_\smallO^{-1}\bbLambda_\smallO'\bbLambda_\smallO\mathbbm{B}_\smallO^{-1}}{N}\Big\rVert_\text{F}^2 &= \sum_{k, \ell=1}^{r} \Big(\frac{1}{N}\sum_{d=0}^D \sum_{i=1}^N {\varphi_k^{\smallO}}^{(d)}(u_i){\varphi_\ell^{\smallO}}^{(d)}(u_i)-\delta_{k\ell}\Big)^2 \\
&\leq r^2 \max_{k,\ell \leq r} \Big(\frac{1}{N}\sum_{d=0}^D \sum_{i=1}^N {\varphi_k^{\smallO}}^{(d)}(u_i){\varphi_\ell^{\smallO}}^{(d)}(u_i)-\delta_{k\ell}\Big)^2 \\
&= O\Big(\frac{{r}^2}{(\lambda_r^\smallO)^2N^2}\Big).
\end{align*}
\end{proof}

\begin{lemma}\label{lemma:Loadings}
Under the assumptions of Theorem~\ref{thm:Convergence},
\begin{enumerate}[label=(\roman*)]
\item $\max_{i\leq N} \,  \norm{\bblambda^*_{\smallO,i}} = O(1)$,
\item $\max_{i\leq N} \,  \norm{\widehat{\bblambda}^*_{\smallO,i}} = O_p(1)$.
\end{enumerate}
\end{lemma}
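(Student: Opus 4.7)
For part (i), the natural approach is an orthonormality argument in $L^2(\Omega, P)$. Rewriting the $k$-th entry of $\bblambda^*_{\smallO,i}$ as
\[
\sqrt{\lambda_k^\smallO}\,\tilde{\varphi}_k^\smallO(u_i) \;=\; \frac{\expv[X(u_i)\xi_k^\smallO]}{\sqrt{\lambda_k^\smallO}} \;=\; \expv\!\left[X(u_i)\cdot \frac{\xi_k^\smallO}{\sqrt{\lambda_k^\smallO}}\right],
\]
exhibits each component as the $L^2(P)$-inner product of $X(u_i)$ with the standardized score $\xi_k^\smallO/\sqrt{\lambda_k^\smallO}$. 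Because the scores are centered, uncorrelated, and have variance $\lambda_k^\smallO$, the family $\{\xi_k^\smallO/\sqrt{\lambda_k^\smallO}\}_{k=1}^r$ is orthonormal in $L^2(P)$, so Bessel's inequality gives $\norm{\bblambda^*_{\smallO,i}}^2 \leq \expv[X(u_i)^2]$. Taking the maximum over $i \leq N$ and invoking $\expv[\sup_u X_t(u)^2] < C$ from Assumption~\ref{ass:MomentsX} closes~(i). Note that this bound is uniform in $r$, which is exactly what is needed since $r \to \infty$.

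For part (ii), I would work directly from the defining formula $\widehat{\bblambda}^*_{\smallO,i} = T_\smallC^{-1}\mathbbm{y}_{\smallC,i}'\widehat{\mathbbm{F}}_\smallO$ rather than routing through the rotation matrix $\mathbbm{H}_\smallO$. The normalization $\widehat{\mathbbm{F}}_\smallO'\widehat{\mathbbm{F}}_\smallO/T_\smallC = \mathbbm{I}_r$ yields $\normS{\widehat{\mathbbm{F}}_\smallO} = \sqrt{T_\smallC}$, so Cauchy--Schwarz delivers $\norm{\widehat{\bblambda}^*_{\smallO,i}} \leq \norm{\mathbbm{y}_{\smallC,i}}/\sqrt{T_\smallC}$. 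For $i \leq N$ the components of $\mathbbm{y}_{\smallC,i}$ are $y_{ti}^{(0)} = X_t(u_i) + e_{ti}^{(0)}$, hence
\[
\norm{\mathbbm{y}_{\smallC,i}}^2 \;\leq\; 2\sum_{t \in \mathcal{T}}\sup_{u \in [0,1]} X_t(u)^2 \;+\; 2\norm{\mathbbm{e}_{\smallC,i}}^2.
\]
The first sum is $i$-free and, by stationarity together with Assumption~\ref{ass:MomentsX} and Markov's inequality, is $O_p(T_\smallC)$; the second satisfies $\max_{i \leq N}\norm{\mathbbm{e}_{\smallC,i}}^2 = O_p(T) = O_p(T_\smallC)$ by Lemma~\ref{lemma:ZF}(ii) and the assumption $T_\smallC \asymp T$. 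Combining yields $\max_{i \leq N}\norm{\widehat{\bblambda}^*_{\smallO,i}} = O_p(1)$.

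No serious obstacle is foreseen. The point worth flagging is that the alternative, rotation-based decomposition $\widehat{\bblambda}^*_{\smallO,i} = \bblambda^*_{\smallO,i}(\mathbbm{H}_\smallO')^{-1} + \{\widehat{\bblambda}^*_{\smallO,i} - \bblambda^*_{\smallO,i}(\mathbbm{H}_\smallO')^{-1}\}$ would, via $\norm{\bblambda^*_{\smallO,i}(\mathbbm{H}_\smallO')^{-1}} \leq \norm{\bblambda^*_{\smallO,i}}\normS{\mathbbm{H}_\smallO^{-1}}$, force one to establish $\normS{\mathbbm{H}_\smallO^{-1}} = O_p(1)$, a bound not immediately available when $r$ diverges (the crude estimate $\normS{\mathbbm{H}_\smallO^{-1}} \leq \normS{\widehat{\mathbbm{F}}_\smallO}\normS{\mathbbm{F}_\smallO}/T_\smallC$ only gives $O_p(\sqrt{r})$). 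Proceeding directly through $\mathbbm{y}_{\smallC,i}$ neatly sidesteps this, and the two assertions of the lemma are handled by genuinely different tools: Bessel for the population object, and a raw norm bound on the data vector for the estimator.
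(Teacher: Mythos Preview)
Your proof of part~(ii) is essentially identical to the paper's: both bound $\norm{\widehat{\bblambda}^*_{\smallO,i}}$ directly via $\norm{\mathbbm{y}_{\smallC,i}}\normS{\widehat{\mathbbm{F}}_\smallO}/T_\smallC$, control $\max_{i\leq N}\norm{\mathbbm{y}_{\smallC,i}}^2$ by splitting into the signal part (handled via $\expv[\sup_u X_t(u)^2]<C$) and the noise part (handled by Lemma~\ref{lemma:ZF}(ii)), and use $\normS{\widehat{\mathbbm{F}}_\smallO}^2=T_\smallC$. Your remark that a rotation-based decomposition would stumble on $\normS{\mathbbm{H}_\smallO^{-1}}=O_p(\sqrt{r})$ is spot on and explains why the paper, too, proceeds directly.

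For part~(i), however, you take a genuinely different and tighter route. The paper observes that $\norm{\bblambda^*_{\smallO,i}}^2=\expv[\mathcal{L}(\mathcal{X}^\smallO)(u_i)^2]$ (since the scores are uncorrelated), then uses the decomposition $\mathcal{L}(\mathcal{X}^\smallO)(u_i)=X(u_i)-Z(u_i)$ to bound this by $2\,\expv[\sup_u X(u)^2+\sup_u Z(u)^2]$, thereby invoking both Assumption~\ref{ass:MomentsX} and Assumption~\ref{ass:MomentsZ}. Your Bessel argument recognizes $(\bblambda^*_{\smallO,i})_k$ as the $L^2(\prob)$-inner product of $X(u_i)$ with the orthonormal family $\{\xi_k^\smallO/\sqrt{\lambda_k^\smallO}\}$ and obtains $\norm{\bblambda^*_{\smallO,i}}^2\leq \expv[X(u_i)^2]$ in one line, bypassing $Z$ and Assumption~\ref{ass:MomentsZ} altogether. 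This is cleaner and sharper; the paper's detour through $Z$ is unnecessary here, though Assumption~\ref{ass:MomentsZ} is still needed elsewhere (e.g.\ Lemma~\ref{lemma:ZF}(iii)).
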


\newpage

\begin{proof}
\phantom{}\newline
\begin{enumerate}[label=(\roman*)]
\item The first assertion follows from 
\begin{equation*}
\max_{i \leq N} \, \norm{\bblambda^*_{\smallO,i}}^2 = \max_{i \leq N} \, \expv\Big[ \mathcal{L}(\mathcal{X}^\smallO)(u_i)^2\Big] \leq 2 \, \expv\Big[ \sup_{u\in [0,1]} X(u)^2 + \sup_{u\in [0,1]} Z(u)^2\Big]
\end{equation*}
which is $O(1)$ due to Assumptions~\ref{ass:MomentsX} and~\ref{ass:MomentsZ}.
\item Observe that by Assumption~\ref{ass:MomentsX} and Lemma~\ref{lemma:ZF}\textit{(ii)},
\begin{equation*}
\max_{i \leq N}\, \norm{\mathbbm{y}_{\smallC,i}}^2 = \max_{i \leq N} \sum_{t \in \mathcal{T}} (X_t(u_i) + e_{ti}^{(0)})^2 = O_p(T).
\end{equation*}
Consequently,
\begin{equation*}
\max_{i\leq N} \,  \norm{\widehat{\bblambda}^*_{\smallO,i}} \leq \max_{i \leq N} \frac{\norm{\mathbbm{y}_{\smallC,i}} \normS{\widehat{\mathbbm{F}}_\smallO}}{T_\smallC} = O_p(1),
\end{equation*}
since $\norm{\widehat{\mathbbm{F}}_\smallO}^2_2 = T_\smallC$.
\end{enumerate}
\end{proof}

\begin{lemma}\label{lemma:Errors}
Under the assumptions of Theorem~\ref{thm:Convergence},
\begin{enumerate}[label=(\roman*)]
\item $\normF{\mathbbm{E}_\smallO\bbLambda_\smallO\mathbbm{B}_\smallO^{-1}}^2 = O_p({r}N T)$,
\item $\normF{\mathbbm{F}_\smallO\bbLambda_\smallO'\mathbbm{E}_\smallO'}^2 = O_p(N T^2)$,
\item $\norm{\mathbbm{e}_{\smallO,t}\widehat{\bbLambda}_\smallO}^2 = O_p(N^2/T + N)$.
\end{enumerate}
\end{lemma}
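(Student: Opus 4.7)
The plan is to treat the three parts separately, in each case reducing the Frobenius norm to an expected-value computation controlled by the covariance bound in Assumption~\ref{ass:Errors}(b), and then appealing to already-established spectral bounds for $\mathbbm{E}_\smallO$, $\mathbbm{F}_\smallO$, and $\widehat{\mathbbm{F}}_\smallO$.

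For part (i), I would expand
\begin{equation*}
\normF{\mathbbm{E}_\smallO\bbLambda_\smallO\mathbbm{B}_\smallO^{-1}}^2 = \sum_{t\in\mathcal{T}}\sum_{k=1}^{r}\frac{(\mathbbm{e}_{\smallO,t}\bblambda_{\smallO,k})^2}{\lambda^\smallO_k},
\end{equation*}
where $\bblambda_{\smallO,k}$ is the $k$-th column of $\bbLambda_\smallO$, whose entries have the form $\sqrt{\lambda^\smallO_k}\,{\varphi^\smallO_k}^{(d)}(u_i)$. Then $\expv[(\mathbbm{e}_{\smallO,t}\bblambda_{\smallO,k})^2]$ expands into a double sum over $i,j$ of products of eigenfunction values (bounded by a constant via Assumption~\ref{ass:Eigenfunctions}) and covariances of measurement errors. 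Invoking Assumption~\ref{ass:Errors}(b) (together with its cross-$d$ extension discussed after the assumption) gives $\expv[(\mathbbm{e}_{\smallO,t}\bblambda_{\smallO,k})^2]=O(\lambda^\smallO_k N)$, so summing over $T_\smallC\asymp T$ and $r$ indices yields $O(rNT)$; Markov's inequality completes (i).

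For part (ii), note that $\mathbbm{F}_\smallO\bbLambda_\smallO'\mathbbm{E}_\smallO'=\mathbbm{X}_\smallO\mathbbm{E}_\smallO'$, and use independence between $\mathcal{X}$ and the errors (Assumption~\ref{ass:Errors}) to get
\begin{equation*}
\expv[\normF{\mathbbm{X}_\smallO\mathbbm{E}_\smallO'}^2]=\sum_{t,s\in\mathcal{T}}\sum_{i,j}\expv[\mathbbm{x}_{\smallO,ti}\mathbbm{x}_{\smallO,tj}]\,\expv[\mathbbm{e}_{\smallO,si}\mathbbm{e}_{\smallO,sj}].
\end{equation*}
The cross-sectional covariance of $X$-values is bounded uniformly by Assumption~\ref{ass:MomentsX}, and the total cross-sectional correlation sum of the errors is $O(N)$ by Assumption~\ref{ass:Errors}(b). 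This gives $O(N)$ per pair $(t,s)$ and hence $O(NT^2)$ in total.

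For part (iii), I would use the factor-model representation to split
\begin{equation*}
\mathbbm{e}_{\smallO,t}\widehat{\bbLambda}_\smallO = \tfrac{1}{T_\smallC}\mathbbm{e}_{\smallO,t}\bbLambda_\smallO\mathbbm{F}_\smallO'\widehat{\mathbbm{F}}_\smallO + \tfrac{1}{T_\smallC}\mathbbm{e}_{\smallO,t}\mathbbm{E}_\smallO'\widehat{\mathbbm{F}}_\smallO.
\end{equation*}
For the first summand, submultiplicativity gives an upper bound by $T_\smallC^{-1}\norm{\mathbbm{e}_{\smallO,t}\bbLambda_\smallO}\normS{\mathbbm{F}_\smallO}\normS{\widehat{\mathbbm{F}}_\smallO}$; here $\expv[\norm{\mathbbm{e}_{\smallO,t}\bbLambda_\smallO}^2]=\sum_k O(\lambda^\smallO_k N)=O(N)$ because $\sum_k \lambda^\smallO_k<\infty$, while $\normS{\mathbbm{F}_\smallO}\normS{\widehat{\mathbbm{F}}_\smallO}=O_p(T)$, so this term contributes $O_p(N)$. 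For the second summand I would bound it by $T_\smallC^{-1}\norm{\mathbbm{e}_{\smallO,t}}\normS{\mathbbm{E}_\smallO}\normS{\widehat{\mathbbm{F}}_\smallO}$ and apply Assumption~\ref{ass:Errors}(a,c) together with $\normS{\widehat{\mathbbm{F}}_\smallO}^2=T_\smallC$ to get $O_p(N\max\{N,T\}/T)=O_p(N^2/T+N)$. Combining both terms yields (iii).

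The main obstacle is the interaction between $\mathbbm{E}_\smallO$ and $\widehat{\mathbbm{F}}_\smallO$ in (iii): since $\widehat{\mathbbm{F}}_\smallO$ is extracted from $\mathbbm{Y}_\smallO$, the errors are not independent of the estimated factors, so a naive conditional expectation argument fails; the spectral-norm bound based on Assumption~\ref{ass:Errors}(a) is precisely what circumvents this. The other (mild) subtlety is verifying that the sum-of-covariances bound in Assumption~\ref{ass:Errors}(b) extends across the different covariate channels $d$, which the paper explicitly allows.
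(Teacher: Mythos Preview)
Your arguments for parts (i) and (ii) are essentially identical to the paper's: the same Frobenius expansion, the same use of bounded eigenfunctions (Assumption~\ref{ass:Eigenfunctions}) and the error-covariance bound (Assumption~\ref{ass:Errors}(b)), and in (ii) the same identification $\mathbbm{F}_\smallO\bbLambda_\smallO'=\mathbbm{X}_\smallO$ with independence of $X$ and the errors.

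Part (iii) is where the two routes diverge. For the first summand, the paper does not split $\mathbbm{X}_\smallO'=\bbLambda_\smallO\mathbbm{F}_\smallO'$; instead it reuses the moment computation from (ii) for a single $t$ to obtain $\norm{\mathbbm{e}_{\smallO,t}\mathbbm{X}_\smallO'}^2=O_p(NT)$ directly, and then multiplies by $\normS{\widehat{\mathbbm{F}}_\smallO}^2/T_\smallC^2$. Your version requires $\normS{\mathbbm{F}_\smallO}=O_p(\sqrt{T})$, which is true under the paper's assumptions (one can show $\normF{\mathbbm{F}_\smallO'\mathbbm{F}_\smallO/T_\smallC-\mathbbm{I}_r}=o_p(1)$ via concentration and Assumption~\ref{ass:Asymptotics}) but is nowhere established in the paper; the only bound used elsewhere is the cruder $\normS{\mathbbm{F}_\smallO}\leq\normF{\mathbbm{F}_\smallO}=O_p(\sqrt{rT})$, which would leave an extra factor $r$ in your estimate. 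So either justify the sharper spectral bound or, more simply, follow the paper and bound $\norm{\mathbbm{e}_{\smallO,t}\mathbbm{X}_\smallO'}$ in one step.

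For the second summand, the paper computes $\norm{\mathbbm{e}_{\smallO,t}\mathbbm{E}_\smallO'}^2$ by hand, separating the diagonal term $s=t$ (giving $O_p(N^2)$ via Assumption~\ref{ass:Errors}(c)) from the off-diagonal terms $s\neq t$ (giving $O_p(NT)$ via Assumption~\ref{ass:Errors}(b)). Your spectral-norm bound $\norm{\mathbbm{e}_{\smallO,t}}\normS{\mathbbm{E}_\smallO}$ using Assumption~\ref{ass:Errors}(a) reaches the same $O_p(N^2+NT)$ more quickly and neatly sidesteps the dependence between $\mathbbm{e}_{\smallO,t}$ and $\widehat{\mathbbm{F}}_\smallO$ that you flagged. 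Both routes yield the stated rate.
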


\begin{proof}
\phantom{}\newline
\begin{enumerate}[label=(\roman*)]
\item Observe that
\begin{align*}
\normF{\mathbbm{E}_\smallO\bbLambda_\smallO\mathbbm{B}_\smallO^{-1}}^2 &= \sum_{t\in \mathcal{T}}\sum_{k=1}^{r} \Big(\sum_{d=0}^{D}\sum_{i=1}^N e^{(d)}_{ti} {\varphi_k^{\smallO}}^{(d)}(u_i)\Big)^2 \\
&\leq (D+1) \sum_{d=0}^{D}\sum_{t\in \mathcal{T}}\sum_{i,j=1}^N e^{(d)}_{ti}e^{(d)}_{tj} \sum_{k=1}^{r} {\varphi_k^{\smallO}}^{(d)}(u_i) {\varphi_k^{\smallO}}^{(d)}(u_j).
\end{align*}
Taking expectation and using Assumption~\ref{ass:Errors}(b) and $\sup_u\abs{{\varphi_k^{\smallO}}^{(d)}(u)} < C$ by Assumption~\ref{ass:Eigenfunctions},
\begin{align*}
\expv[\normF{\mathbbm{E}_\smallO\bbLambda_\smallO\mathbbm{B}_\smallO^{-1}}^2] &\leq  (D+1) T_\smallC \sum_{d=0}^{D}\sum_{i,j=1}^N \abs{\expv[e^{(d)}_{ti}e^{(d)}_{tj}]} \sum_{k=1}^{r} \abs{{\varphi_k^{\smallO}}^{(d)}(u_i) {\varphi_k^{\smallO}}^{(d)}(u_j)} \\
&= O({r}N T).
\end{align*}
\item The factor structure entails $\mathbbm{F}_\smallO\bbLambda_\smallO' = \mathbbm{X}_\smallO$. Therefore, the independence of $X_s^{(d)}(u_i)$ and $e^{(d)}_{ti}$ due to Assumption~\ref{ass:Errors} yields
\begin{align*}
&\expv\left[\norm{\mathbbm{F}_\smallO\bbLambda_\smallO'\mathbbm{e}_{\smallO,t}'}^2\right] \\
&\, = \expv\left[\norm{\mathbbm{X}_\smallO \mathbbm{e}_{\smallO,t}'}^2\right] \leq (D+1)  \sum_{d=0}^{D}\sum_{s  \in \mathcal{T}} \sum_{i,j=1}^N \expv[X_s^{(d)}(u_i)X_s^{(d)}(u_j)]\expv[e^{(d)}_{ti}e^{(d)}_{tj}],
\end{align*}
where we have used the notation $X_s^{(0)}(u) = X_s(u)$. Furthermore, since $\expv[X_s^{(d)}(u_i)X_s^{(d)}(u_j)]$ is bounded, we deduce from Assumption~\ref{ass:Errors}(b),
\begin{equation}\label{eq:Xet}
\expv\left[\norm{\mathbbm{F}_\smallO\bbLambda_\smallO'\mathbbm{e}_{\smallO,t}'}^2\right] \leq C(D+1) T_\smallC\sum_{d=0}^D \sum_{i,j=1}^N \abs{\expv[e_{ti}^{(d)}e_{tj}^{(d)}]} = O_p(N T).
\end{equation}
\item Recall that $\widehat{\bbLambda}_\smallO = \mathbbm{Y}_\smallO'\widehat{\mathbbm{F}}_\smallO/T_\smallC$, whence
\begin{equation}\label{eq:e_tLambda}
\mathbbm{e}_{\smallO,t}\widehat{\bbLambda}_\smallO = \frac{1}{T_\smallC} \left(\mathbbm{e}_{\smallO,t}\mathbbm{X}_\smallO' + \mathbbm{e}_{\smallO,t} \mathbbm{E}_\smallO'\right)\widehat{\mathbbm{F}}_\smallO.
\end{equation}
From~\eqref{eq:Xet}, we get $\norm{\mathbbm{e}_{\smallO,t}\mathbbm{X}_\smallO'}^2 = O_p(N T)$. In addition, 
\begin{align*}
\norm{\mathbbm{e}_{\smallO,t} \mathbbm{E}_\smallO'}^2 &= \sum_{s  \in \mathcal{T}} \Big(\sum_{d=0}^D \sum_{i=1}^N e^{(d)}_{ti}e^{(d)}_{si}\Big)^2 \\
&\leq \Big(\sum_{d=0}^D\sum_{i=1}^N (e^{(d)}_{ti})^2\Big)^2 + \sum_{\substack{s \in \mathcal{T}\\s\neq t}} \Big(\sum_{d=0}^D\sum_{i=1}^N e^{(d)}_{ti}e^{(d)}_{si}\Big)^2.
\end{align*}
Due to Assumption~\ref{ass:Errors}(c), the first term is $O_p(N^2)$. It is relevant only if $t \in \mathcal{T}$ meaning that $X_t$ is completely observed. For the second term, we deduce from Assumption~\ref{ass:Errors}(b),
\begin{equation*}
\expv\left[ \sum_{\substack{s \in \mathcal{T}\\s\neq t}} \Big(\sum_{d=0}^D \sum_{i=1}^N e^{(d)}_{si}e^{(d)}_{ti}\Big)^2\right] \leq (D+1) T_\smallC \sum_{d=0}^D \sum_{i,j=1}^N \abs{\expv[e_{ti}^{(d)}e_{tj}^{(d)}]}^2 = O_p(N T ).
\end{equation*}
Combining the above with~\eqref{eq:e_tLambda} and the fact $\normS{\widehat{\mathbbm{F}}_\smallO}^2 = T_\smallC$, we conclude that $\norm{\mathbbm{e}_{\smallO,t}\widehat{\bbLambda}_\smallO}^2 = O_p(N + N^2/T)$.
\end{enumerate}
\end{proof}

\begin{lemma}\label{lemma:BFFB}
Under the assumptions of Theorem~1,
\begin{enumerate}[label=(\roman*)]
\item $\normS{\mathbbm{B}_\smallO\mathbbm{F}_\smallO'\widehat{\mathbbm{F}}_\smallO\mathbbm{B}_\smallO^{-1}} = O_p(T)$,
\item $\normS{(\mathbbm{B}_\smallO\mathbbm{F}_\smallO'\widehat{\mathbbm{F}}_\smallO\mathbbm{B}_\smallO^{-1})^{-1}} = O_p(1/T)$.
\end{enumerate}
\end{lemma}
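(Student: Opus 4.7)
My plan is to exploit the PCA eigenequation to show that a rescaled version of $\mathbbm{P} = \widehat{\mathbbm{F}}_\smallO'\mathbbm{F}_\smallO/T_\smallC = \mathbbm{H}_\smallO^{-1}$ is ``approximately orthogonal'', which will yield both parts simultaneously. First I observe that $\mathbbm{B}_\smallO\mathbbm{F}_\smallO'\widehat{\mathbbm{F}}_\smallO\mathbbm{B}_\smallO^{-1} = T_\smallC\,\mathbbm{B}_\smallO\mathbbm{P}'\mathbbm{B}_\smallO^{-1}$, and by invariance of the spectral norm under transposition this matrix has spectral norm $T_\smallC\normS{\mathbbm{Q}}$, where I set $\mathbbm{Q} = \mathbbm{B}_\smallO^{-1}\mathbbm{P}\mathbbm{B}_\smallO$. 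Thus part (i) is equivalent to $\normS{\mathbbm{Q}} = O_p(1)$ and part (ii) to $\normS{\mathbbm{Q}^{-1}} = O_p(1)$.

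Next I would derive a master equation for $\mathbbm{Q}\mathbbm{Q}'$. Premultiplying the eigenequation $\mathbbm{Y}_\smallO\mathbbm{Y}_\smallO'\widehat{\mathbbm{F}}_\smallO/(NT_\smallC) = \widehat{\mathbbm{F}}_\smallO\mathbbm{D}_\smallO^2$ by $\widehat{\mathbbm{F}}_\smallO'/T_\smallC$ (and using $\widehat{\mathbbm{F}}_\smallO'\widehat{\mathbbm{F}}_\smallO/T_\smallC = \mathbbm{I}_r$), then substituting $\mathbbm{Y}_\smallO = \mathbbm{F}_\smallO\bbLambda_\smallO' + \mathbbm{E}_\smallO$, one obtains
\begin{equation*}
\mathbbm{D}_\smallO^2 = \mathbbm{P}\,\frac{\bbLambda_\smallO'\bbLambda_\smallO}{N}\,\mathbbm{P}' + \mathbbm{E}_1 + \mathbbm{E}_1' + \mathbbm{E}_2,
\end{equation*}
with $\mathbbm{E}_1 = \widehat{\mathbbm{F}}_\smallO'\mathbbm{F}_\smallO\bbLambda_\smallO'\mathbbm{E}_\smallO'\widehat{\mathbbm{F}}_\smallO/(NT_\smallC^2)$ and $\mathbbm{E}_2 = \widehat{\mathbbm{F}}_\smallO'\mathbbm{E}_\smallO\mathbbm{E}_\smallO'\widehat{\mathbbm{F}}_\smallO/(NT_\smallC^2)$. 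Using Lemma~\ref{lemma:LL} to write $\bbLambda_\smallO'\bbLambda_\smallO/N = \mathbbm{B}_\smallO(\mathbbm{I}_r + \mathbbm{R})\mathbbm{B}_\smallO$ with $\normS{\mathbbm{R}} = o(1)$ and Lemma~\ref{lemma:MatrixD}(i) to replace $\mathbbm{B}_\smallO^{-2}\mathbbm{D}_\smallO^2$ by $\mathbbm{I}_r + o_p(1)$, conjugation by $\mathbbm{B}_\smallO^{-1}$ yields
\begin{equation*}
\mathbbm{I}_r + o_p(1) = \mathbbm{Q}(\mathbbm{I}_r + \mathbbm{R})\mathbbm{Q}' + \mathbbm{Q}\mathbbm{\Delta}_1 + \mathbbm{\Delta}_1'\mathbbm{Q}' + \mathbbm{\Delta}_2,
\end{equation*}
where $\mathbbm{\Delta}_1 = \mathbbm{B}_\smallO^{-1}\bbLambda_\smallO'\mathbbm{E}_\smallO'\widehat{\mathbbm{F}}_\smallO\mathbbm{B}_\smallO^{-1}/(NT_\smallC)$ and $\mathbbm{\Delta}_2 = \mathbbm{B}_\smallO^{-1}\mathbbm{E}_2\mathbbm{B}_\smallO^{-1}$. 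Direct estimates via Lemma~\ref{lemma:Errors}(i), Assumption~\ref{ass:Errors}(a) and the rates in Assumption~\ref{ass:Asymptotics} give $\normS{\mathbbm{\Delta}_1} = O_p(\sqrt{r/(\lambda_r^\smallO N)}) = o_p(1)$ and $\normS{\mathbbm{\Delta}_2} = O_p((\lambda_r^\smallO)^{-1}(N^{-1} + T^{-1})) = o_p(1)$.

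I would then extract $\normS{\mathbbm{Q}} = O_p(1)$ by a self-bounding quadratic argument. Taking spectral norms in the master equation and using $\normS{\mathbbm{Q}(\mathbbm{I}_r + \mathbbm{R})\mathbbm{Q}'} \geq (1-o_p(1))\normS{\mathbbm{Q}}^2$ gives $(1 - o_p(1))x^2 \leq 1 + 2o_p(1)\,x + o_p(1)$ for $x = \normS{\mathbbm{Q}}$, which forces $x = 1 + o_p(1) = O_p(1)$. With this in hand the cross-terms $\mathbbm{Q}\mathbbm{\Delta}_1$ are $o_p(1)$, so in fact $\mathbbm{Q}(\mathbbm{I}_r + \mathbbm{R})\mathbbm{Q}' = \mathbbm{I}_r + o_p(1)$. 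Writing $\widetilde{\mathbbm{Q}} = \mathbbm{Q}(\mathbbm{I}_r + \mathbbm{R})^{1/2}$, this reads $\widetilde{\mathbbm{Q}}\widetilde{\mathbbm{Q}}' = \mathbbm{I}_r + o_p(1)$, whence $\normS{\widetilde{\mathbbm{Q}}^{-1}} = 1/\sqrt{\lambda_{\min}(\widetilde{\mathbbm{Q}}\widetilde{\mathbbm{Q}}')} = O_p(1)$, and therefore $\normS{\mathbbm{Q}^{-1}} \leq \normS{(\mathbbm{I}_r + \mathbbm{R})^{1/2}}\,\normS{\widetilde{\mathbbm{Q}}^{-1}} = O_p(1)$.

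The main obstacle is the apparent circularity: the cross term $\mathbbm{Q}\mathbbm{\Delta}_1$ cannot be bounded without the a priori knowledge that $\normS{\mathbbm{Q}} = O_p(1)$, which is exactly what we want to prove. The self-bounding quadratic inequality above is precisely what breaks this loop, exploiting that the dominant contribution $\mathbbm{Q}\mathbbm{Q}'$ is quadratic in $\mathbbm{Q}$ while the cross term is only linear. A secondary subtlety is that conjugation by $\mathbbm{B}_\smallO^{-1}$ introduces a factor of $1/\lambda_r^\smallO$ in each error bound; the rate conditions of Assumption~\ref{ass:Asymptotics} are what guarantee that $\normS{\mathbbm{R}}, \normS{\mathbbm{\Delta}_1}, \normS{\mathbbm{\Delta}_2}$ all remain $o_p(1)$ after this inflation.
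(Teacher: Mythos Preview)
Your proposal is correct and follows the same architecture as the paper's proof: both start from the eigenequation $\widehat{\mathbbm{F}}_\smallO'\mathbbm{Y}_\smallO\mathbbm{Y}_\smallO'\widehat{\mathbbm{F}}_\smallO/(NT_\smallC^2)=\mathbbm{D}_\smallO^2$, conjugate by $\mathbbm{B}_\smallO^{-1}$, and establish that the rescaled matrix (your $\mathbbm{Q}$, the paper's $\mathbb{A}=\mathbbm{B}_\smallO^{-1}\widehat{\mathbbm{F}}_\smallO'\mathbbm{F}_\smallO\mathbbm{B}_\smallO/T_\smallC$) satisfies $\mathbb{A}\mathbb{A}'=\mathbbm{I}_r+o_p(1)$, from which both (i) and (ii) follow.

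The one tactical difference is how the circularity is broken. You factor the cross term as $\mathbbm{Q}\mathbbm{\Delta}_1$ via Lemma~\ref{lemma:Errors}(i) and then resolve the self-reference with a quadratic inequality. The paper instead avoids creating a $\mathbbm{Q}$-factor in the cross term altogether: it bounds $\normS{\mathbbm{F}_\smallO\bbLambda_\smallO'\mathbbm{E}_\smallO'}=O_p(\sqrt{N}T)$ directly (Lemma~\ref{lemma:Errors}(ii)), so after conjugation the cross and pure-error contributions are $o_p(1)$ without reference to $\mathbb{A}$. The only residual circularity in the paper is the $\mathbb{A}\mathbbm{R}\mathbb{A}'$ term, and this is dispatched with the crude a~priori bound $\normS{\mathbb{A}}\leq\normS{\mathbbm{B}_\smallO^{-1}}\normS{\widehat{\mathbbm{F}}_\smallO}\normS{\mathbbm{F}_\smallO}\normS{\mathbbm{B}_\smallO}/T_\smallC=O_p(\sqrt{r/\lambda_r^\smallO})$ together with $\normS{\mathbbm{R}}=O(r/(\lambda_r^\smallO N))$ from Lemma~\ref{lemma:LL}, giving $\normS{\mathbb{A}}^2\normS{\mathbbm{R}}=O_p((r/\lambda_r^\smallO)^2/N)=o_p(1)$ under Assumption~\ref{ass:Asymptotics}. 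Your self-bounding route is a clean alternative that does not rely on this crude bound; the paper's route is a bit shorter but uses one more lemma.
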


\newpage

\begin{proof}
\phantom{}\newline
\begin{enumerate}[label=(\roman*)]
\item Observe that
\begin{equation}\label{eq:lemmaBFFB}
\frac{1}{N T_\smallC^2} \mathbbm{B}_\smallO^{-1} \widehat{\mathbbm{F}}_\smallO'\mathbbm{Y}_\smallO\mathbbm{Y}_\smallO' \widehat{\mathbbm{F}}_\smallO \mathbbm{B}_\smallO^{-1} = \mathbbm{B}_\smallO^{-2} \mathbbm{D}_\smallO^{2}.
\end{equation}
Furthermore, $\normF{\mathbbm{F}_\smallO\bbLambda_\smallO'\mathbbm{E}_\smallO'} = O_p(\sqrt{N}T)$ by Lemma~\ref{lemma:Errors}\textit{(ii)} and $\normS{\mathbbm{E}_\smallO\mathbbm{E}_\smallO'} = O_p(N + T)$ by Assumption~\ref{ass:Errors}(a). Therefore,
\begin{align*}
\mathbbm{Y}_\smallO\mathbbm{Y}_\smallO' &= \mathbbm{F}_\smallO\bbLambda_\smallO'\bbLambda_\smallO\mathbbm{F}_\smallO'+\mathbbm{F}_\smallO\bbLambda_\smallO'\mathbbm{E}_\smallO' + \mathbbm{E}_\smallO\bbLambda_\smallO\mathbbm{F}_\smallO' + \mathbbm{E}_\smallO\mathbbm{E}_\smallO'\\
&= \mathbbm{F}_\smallO\bbLambda_\smallO'\bbLambda_\smallO\mathbbm{F}_\smallO' + O_p(\sqrt{N}T + N).
\end{align*}
Since $\mathbbm{B}_\smallO^{-1} \widehat{\mathbbm{F}}_\smallO'O_p(\sqrt{N}T + N) \widehat{\mathbbm{F}}_\smallO \mathbbm{B}_\smallO^{-1} = O_p(1/\lambda^\smallO_{r}(\sqrt{N}T^2 + N T)) = o_p(N T^2)$ by Assumption~\ref{ass:Asymptotics},
\begin{equation}\label{eq:lemmaBFFB2}
\frac{1}{N T_\smallC^2}\mathbbm{B}_\smallO^{-1} \widehat{\mathbbm{F}}_\smallO'\mathbbm{Y}_\smallO\mathbbm{Y}_\smallO' \widehat{\mathbbm{F}}_\smallO \mathbbm{B}_\smallO^{-1} = \frac{1}{N T_\smallC^2}\mathbbm{B}_\smallO^{-1} \widehat{\mathbbm{F}}_\smallO'\mathbbm{F}_\smallO\bbLambda_\smallO'\bbLambda_\smallO\mathbbm{F}_\smallO' \widehat{\mathbbm{F}}_\smallO \mathbbm{B}_\smallO^{-1} + o_p(1).
\end{equation}
Let us now define the $({r}\times {r})$ matrix $\mathbb{A} = \mathbbm{B}_\smallO^{-1} \widehat{\mathbbm{F}}_\smallO'\mathbbm{F}_\smallO\mathbbm{B}_\smallO/T_\smallC$ to ease notation. A trivial bound of~$\mathbb{A}$ can be obtained by observing $$\normS{\mathbb{A}} \leq \normS{\mathbbm{B}_\smallO^{-1}} \normS{\widehat{\mathbbm{F}}_\smallO}\normS{\mathbbm{F}_\smallO}\normS{\mathbbm{B}_\smallO}/T_\smallC = O_p(\sqrt{r/\lambda^\smallO_{r}}).$$ Then, combining \eqref{eq:lemmaBFFB} with \eqref{eq:lemmaBFFB2},
\begin{equation*}
\mathbb{A}\,\frac{\mathbbm{B}_\smallO^{-1}\bbLambda_\smallO'\bbLambda_\smallO\mathbbm{B}_\smallO^{-1}}{N}\,\mathbb{A}' = \mathbbm{B}_\smallO^{-2} \mathbbm{D}_\smallO^{2} + o_p(1).
\end{equation*}

Consequently,
\begin{align*}
\Big\lVert\mathbb{A}\mathbb{A}'-\mathbbm{I}_{r}\Big\rVert_2 &\leq \normS{\mathbb{A}}^2 \Big\lVert \mathbbm{I}_{r} - \frac{\mathbbm{B}_\smallO^{-1}\bbLambda_\smallO'\bbLambda_\smallO\mathbbm{B}_\smallO^{-1}}{N}\Big\rVert_2 + \normS{\mathbbm{B}_\smallO^{-2} \mathbbm{D}_\smallO^{2}-\mathbbm{I}_{r}} + o_p(1) \\
&= O_p\left(\frac{{r}}{\lambda^\smallO_{r}}\right)O\left(\frac{r}{\lambda_r^\smallO N}\right) + o_p(1) = o_p(1),
\end{align*}
where we have used Lemma~\ref{lemma:LL}, Lemma~\ref{lemma:MatrixD}\textit{(i)}, and Assumption~\ref{ass:Asymptotics}. This finally gives the stronger bound $\normS{\mathbb{A}} = O_p(1)$.
\item It suffices to observe
\begin{equation*}
\normS{\mathbb{A}^{-1}} \leq \frac{\normS{\mathbb{A}}}{1-\normS{\mathbb{A}\mathbb{A}'-\mathbbm{I}_{r}}} = O_p(1).
\end{equation*}
\end{enumerate}
\end{proof}

\section{Strong mixing}\label{sec:Mixing}

We define the mixing coefficients appearing in Assumption~\ref{ass:Mixing}. For a standard reference on mixing, we refer to \citeAppendix{doukhan2012mixing}; the case of random functions is treated in Section 2.4 of \citeAppendix{bosq2000linear}.

\begin{definition}
Let $(\mathcal{X}_t: t \in \mathbb{Z})$ be a time series and define $\mathcal{F}_{-\infty}^k$ and $\mathcal{F}_{k+h}^\infty$ to be the $\sigma$-algebras generated by $(\mathcal{X}_t: t \leq k)$ and $(\mathcal{X}_t: t \geq k+h)$, respectively. Let
\begin{equation*}
\alpha(h) = \sup_{k \in \mathbb{Z}}\, \sup \left\lbrace \abs{\prob(A\cap B) - \prob(A)\prob(B)}: A \in \mathcal{F}_{-\infty}^k, B \in \mathcal{F}_{k+h}^\infty \right\rbrace, \qquad h \in \mathbb{Z},
\end{equation*}
be the mixing coefficients. Then, $(\mathcal{X}_t: t \in \mathbb{Z})$ is said to be $\alpha$-mixing if it holds that $\lim_{h \to \infty}\alpha(h) = 0$.
\end{definition}

\begin{theorem}\label{thm:ConsistencyCovariance}
Let $(\mathcal{X}_t: t \in \mathbb{Z})$ satisfy Assumptions~\ref{ass:Sample}, \ref{ass:Mixing}, and~\ref{ass:MomentsX} and define
\begin{equation*}
\widehat{\Gamma} f = \frac{1}{T_\smallC} \sum_{t \in \mathcal{T}} \langle \langle \mathcal{X}_t, f\rangle \rangle \mathcal{X}_t, \qquad f \in \mathcal{H}.
\end{equation*}
Then,
\begin{equation*}
\expv [ \norm{\widehat{\Gamma}-\Gamma}^2_\textup{op}]  \leq C T_\smallC^{-1}.
\end{equation*}
\end{theorem}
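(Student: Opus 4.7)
The plan is to bound the operator norm by the Hilbert--Schmidt norm $\norm{\cdot}_{\textup{HS}}$ and then exploit the product-integral representation of the latter together with Fubini. Concretely, I would first write, using $X_t^{(0)} = X_t$ and denoting by $K_t^{(d,d')}(u,v) = X_t^{(d)}(u)X_t^{(d')}(v)$ the cross-kernel of $\mathcal{X}_t$,
\begin{equation*}
\norm{\widehat\Gamma - \Gamma}_{\textup{op}}^2 \leq \norm{\widehat\Gamma - \Gamma}_{\textup{HS}}^2 = \sum_{d,d'=0}^D w_d w_{d'}\int_0^1\!\int_0^1 \Big(\frac{1}{T_\smallC}\sum_{t\in\mathcal{T}} \{K_t^{(d,d')}(u,v) - \expv K_t^{(d,d')}(u,v)\}\Big)^2\,\du\,\dv.
\end{equation*}
Taking expectations and pushing it through Fubini reduces the problem to a pointwise bound
\begin{equation*}
\sup_{u,v\in[0,1]}\sup_{d,d'}\, \var\Big(\tfrac{1}{T_\smallC}\sum_{t\in\mathcal{T}} K_t^{(d,d')}(u,v)\Big) \leq \frac{C}{T_\smallC},
\end{equation*}
since the remaining domain has finite measure and $D$ is fixed.

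To get this pointwise variance bound, I would use stationarity to rewrite the variance as $T_\smallC^{-1}\sum_{|h|<T_\smallC}(1 - |h|/T_\smallC)\,\textup{Cov}(K_0^{(d,d')}(u,v),K_h^{(d,d')}(u,v))$, and then apply the standard $\alpha$-mixing covariance inequality (Davydov): for any $p>2$,
\begin{equation*}
|\textup{Cov}(K_0^{(d,d')}(u,v), K_h^{(d,d')}(u,v))| \leq C\,\alpha(h)^{1-2/p}\,\expv[|K_0^{(d,d')}(u,v)|^p]^{2/p}.
\end{equation*}
Choosing $p = 2 + \delta/2$ and using Cauchy--Schwarz on the product $X_t^{(d)}(u)X_t^{(d')}(v)$, Assumption~\ref{ass:MomentsX} gives $\sup_{u,v}\expv[|K_0^{(d,d')}(u,v)|^{2+\delta/2}] \leq C$ uniformly. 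Since $\alpha(h) = \exp(-Ch^{\gamma_1})$ is summable and in fact $\sum_h \alpha(h)^{1-2/p}$ converges, the variance series telescopes to a uniform $O(T_\smallC^{-1})$ bound, which yields the claim.

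The main obstacle is ensuring that the moment and mixing controls survive \emph{uniformly} in $(u,v,d,d')$ — the constants must not depend on the evaluation point, which is exactly what Assumption~\ref{ass:MomentsX} provides (the bound on $\sup_u\expv[X_t^{(d)}(u)^{4+\delta}]$ is precisely calibrated to make Davydov's inequality produce a uniform constant once we apply Cauchy--Schwarz across the two evaluation points). Beyond this, the argument is standard, and no further tools from the body of the paper are needed.
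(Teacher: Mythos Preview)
Your plan is correct and matches the paper's proof essentially step for step: bound $\norm{\cdot}_{\textup{op}}$ by the Hilbert--Schmidt norm, push the expectation through via Fubini to reduce to a uniform pointwise variance bound, and control the autocovariances by the $\alpha$-mixing covariance inequality (the paper cites Doukhan's Theorem~3, your Davydov) combined with the $(4+\delta)$-moment from Assumption~\ref{ass:MomentsX}. The only cosmetic slip is that the exact $(1-|h|/T_\smallC)$ stationarity formula presumes $\mathcal{T}$ is a block of consecutive integers; the paper instead bounds $T_\smallC^{-2}\sum_{s,t\in\mathcal{T}}\alpha(|s-t|)^{1/\rho}\le CT_\smallC^{-1}$ directly, which your argument delivers just as well.
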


\begin{proof}
One observes that
\begin{align*}
&\norm{\widehat{\Gamma} - \Gamma}_{\text{op}}^2 \\
&\quad \leq C \sum_{d, d'=0}^D \int_0^1\int_0^1 \Big(\frac{1}{T_\smallC}\sum_{t \in \mathcal{T}} (X_t^{(d)}(u)X_t^{(d')}(v)-\expv[X_t^{(d)}(u)X_t^{(d')}(v)])\Big)^2 \,\du \, \dv,
\end{align*}
whence,
\begin{equation*}
\expv[\norm{\widehat{\Gamma} - \Gamma}_{\text{op}}^2] \leq C \sum_{d, d'=0}^D \int_0^1\int_0^1 \var{\Big(\frac{1}{T_\smallC}\sum_{t \in \mathcal{T}} X_t^{(d)}(u)X_t^{(d')}(v)\Big)} \,\du \, \dv.
\end{equation*}
For the proof, it thus suffices to show that uniformly for $u,v \in [0,1]$,
\begin{equation*}
\var{\Big(\frac{1}{T_\smallC}\sum_{t \in \mathcal{T}} X_t^{(d)}(u)X_t^{(d')}(v)\Big)} = O(1/T_\smallC).
\end{equation*}
To this end, we note that the sequence $(X_t^{(d)}(u)X_t^{(d')}(v): t \in \mathcal{T})$ is itself $\alpha$-mixing with coefficients $\alpha(h) = \exp(-Ch^{\gamma_1})$. Thus, Theorem~3 in \citeAppendix{doukhan2012mixing} yields
\begin{align*}
&\text{Cov}\Big(X_t^{(d)}(u)X_t^{(d')}(v), X_s^{(d)}(u)X_s^{(d')}(v)\Big) \\
&\qquad \leq 8 \alpha(\abs{s-t})^{1/\rho} \sup_{u \in [0,1]}\, \expv[X_t^{(d)}(u)^{4+\delta}]^{4/(4+\delta)},
\end{align*}
where $\rho = \frac{4 + \delta}{\delta}$. Since $\sup_u\, \expv[X_t^{(d)}(u)^{4+\delta}]< C$ and the coefficients $\alpha(\abs{s-t})^{1/\rho}$ are summable,
\begin{align*}
&\var{\Big(\frac{1}{T_\smallC}\sum_{t \in \mathcal{T}} X_t^{(d)}(u)X_t^{(d')}(v)\Big)} \\
&\qquad = \frac{1}{T_c^2} \sum_{s,t \in \mathcal{T}} \text{Cov}\Big(X_t^{(d)}(u)X_t^{(d')}(v), X_s^{(d)}(u)X_s^{(d')}(v)\Big) \\
&\qquad \leq \frac{C}{T_c^2} \sum_{s,t \in \mathcal{T}} \alpha(\abs{s-t})^{1/\rho} \leq \frac{C}{T_c} \sum_{h = 0}^\infty \exp\Big(-\frac{Ch^{\gamma_1}}{\rho}\Big) \leq  \frac{C}{T_\smallC}.
\end{align*}
which completes the proof of the theorem.
\end{proof}

\section{Additional simulation results}

In the following, we present the estimated best linear reconstructions of the temperature data set considered in Section~\ref{sec:RealDataApplication}. For more details, we refer to the caption of Figure~\ref{fig:weatherexamples}.

\bibliographyAppendix{ref}
\bibliographystyleAppendix{apalike}

\begin{figure}[h]
\centering
\includegraphics[width=\linewidth]{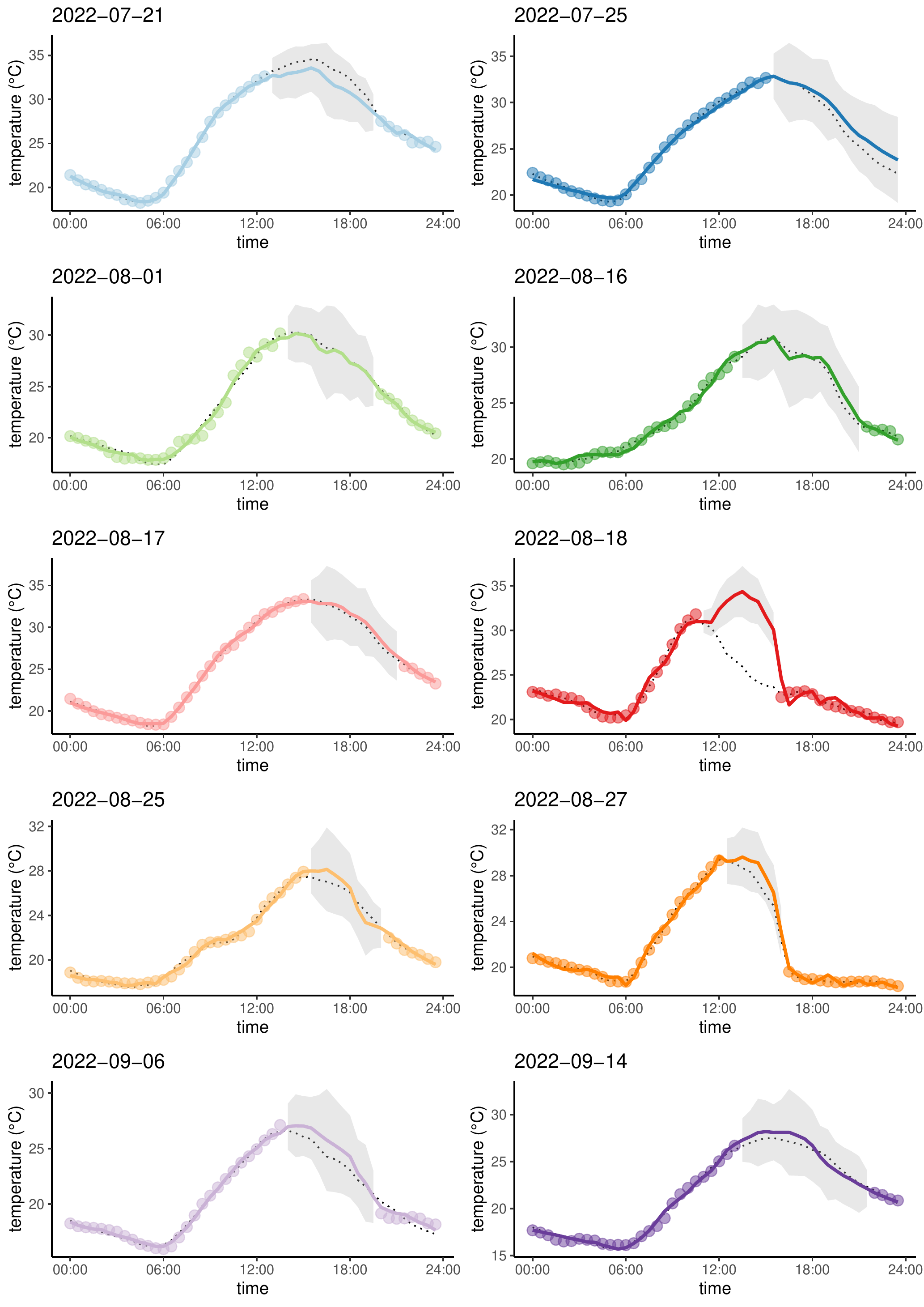}
\caption{Reconstructions additional to Figure~\ref{fig:weatherexamples}.}
\label{fig:weatherreconstructions}
\end{figure}

\newpage

\end{document}